\newcounter{ipotesi}
 \makeatletter \@addtoreset{equation}{section}
\newtheorem{thm}{Theorem}
\newtheorem{hyp}[thm]{Hypotheses}{\rm}
{\rm}
\newtheorem{lemm}[thm]{Lemma}
\newtheorem{cor}[thm]{Corollary}
\newtheorem{prop}[thm]{Proposition}
\newtheorem{defi}[thm]{Definition}
\newtheorem{rmk}[thm]{Remark}{\rm}
\newcounter{parentenv}
\newcommand{\R}{{\mathbb R}}
\newcommand{\E}{{\mathbb E}}
\newcommand{\N}{{\mathbb N}}
\newcommand{\X}{{\mathcal{X}}}
\newcommand{\D}{{D}}
\newcommand{\J}{{\mathcal{D}}}
\newcommand{\eps}{\varepsilon}
\newcommand{\ra}{\rightarrow}
\newcommand{\Id}{{\rm {Id}}}
\newcommand{\lip}{\operatorname{{Lip}}}
\renewcommand{\hat}[1]{\widehat{#1}}
\newcommand{\Tr}{{\operatorname{Tr}}}
\newcommand{\set}[1]{{\left\{#1\right\}}}
\newcommand{\pa}[1]{{\left(#1\right)}}
\newcommand{\sq}[1]{{\left[#1\right]}}
\newcommand{\gen}[1]{{\left\langle #1\right\rangle}}
\newcommand{\abs}[1]{{\left|#1\right|}}
\newcommand{\norm}[1]{{\left\|#1\right\|}}
\newcommand{\scal}[2]{{\left\langle #1,#2\right\rangle}}
\newcommand{\eqsys}[1]{{\left\{\begin{array}{ll}#1\end{array}\right.}}
\begin{document}

\frenchspacing

\title[Regularizing properties of transition semigroups]{Regularizing properties of (non-Gaussian) transition semigroups in Hilbert spaces}

\author[D. A. Bignamini]{{D. A. Bignamini}}

\author[S. Ferrari]{{S. Ferrari}$^*$}\thanks{$^*$Corresponding Author}

\address[D. A. Bignamini]{Dipartimento di Scienze Matematiche, Fisiche e Informatiche, Universit\`a degli Studi di Parma, Parco Area delle Scienze 53/A, 43124 Parma, Italy.}
\email{\textcolor[rgb]{0.00,0.00,0.84}{davideaugusto.bignamini@unimore.it}}

\address[S. Ferrari]{Dipartimento di Matematica e Fisica ``Ennio De Giorgi'', Universit\`a del Salento. POB 193, 73100 Lecce,
Italy.}
\email{\textcolor[rgb]{0.00,0.00,0.84}{simone.ferrari@unisalento.it}}

\subjclass[2010]{35R60, 60G15, 60H15.}

\keywords{Bismuth--Elworty--Li formula, Cahn--Hilliard stochastic partial differential equation, Kolmogorov equations, stochastic partial differential equations, transition semigroups.}

\date{\today}

\begin{abstract}
Let $\X$ be a separable Hilbert space with norm $\norm{\cdot}$ and let $T>0$. Let $Q$ be a linear, self-adjoint, positive, trace class operator on $\X$, let $F:\X\ra\X$ be a (smooth enough) function and let $W(t)$ be a $\X$-valued cylindrical Wiener process. For $\alpha\in [0,1/2]$ we consider the operator $A:=-(1/2)Q^{2\alpha-1}:Q^{1-2\alpha}(\X)\subseteq\X\ra\X$. We are interested in the mild solution $X(t,x)$ of the semilinear stochastic partial differential equation
\begin{gather*}
\eqsys{
dX(t,x)=\big(AX(t,x)+F(X(t,x))\big)dt+ Q^{\alpha}dW(t), & t\in(0,T];\\
X(0,x)=x\in \X,
}
\end{gather*}
and in its associated transition semigroup 
\begin{align*}
P(t)\varphi(x):=\E[\varphi(X(t,x))], \qquad \varphi\in B_b(\X),\ t\in[0,T],\ x\in \X;
\end{align*}
where $B_b(\X)$ is the space of the bounded and Borel measurable functions. We will show that under suitable hypotheses on $Q$ and $F$, $P(t)$ enjoys regularizing properties, along a continuously embedded subspace of $\X$. More precisely there exists $K:=K(F,T)>0$ such that for every $\varphi\in B_b(\X)$, $x\in \X$, $t\in(0,T]$ and $h\in Q^\alpha(\X)$ it holds
\[|P(t)\varphi(x+h)-P(t)\varphi(x)|\leq Kt^{-1/2}\|Q^{-\alpha}h\|.\] 
\end{abstract}

\maketitle

\section{Introduction}\label{introduction}

Let $(\Omega,\mathcal{F},\set{\mathcal{F}_t}_{t\geq 0},\mathbb{P})$ be a complete filtered probability space. We denote by $\E[\cdot]$ the expectation with respect to $\mathbb{P}$. Let $\X$ be a real separable Hilbert space with inner product $\scal{\cdot}{\cdot}$ and norm $\norm{\cdot}$. Let $Q$ be a linear, self-adjoint, positive, trace class operator on $\X$. For $\alpha\in [0,1/2]$ we consider the operator $A:=-(1/2)Q^{2\alpha-1}: Q^{1-2\alpha}(\X)\subseteq \X\rightarrow \X$, and a suitable (smooth enough) function $F:\X\ra\X$. Let $W(t)$ be a $\X$-valued cylindrical Wiener process (see Remark \ref{MBX}). 

For $T>0$ we consider the mild solution $X(t,x)$ of the semilinear stochastic partial differential equation 
\begin{gather}\label{eqF}
\eqsys{
dX(t,x)=\big(AX(t,x)+F(X(t,x))\big)dt+ Q^{\alpha}dW(t), & t\in(0,T];\\
X(0,x)=x\in \X,
}
\end{gather}
and its associated transition semigroup 
\begin{align}\label{transition}
P(t)\varphi(x):=\E[\varphi(X(t,x))]\qquad t\in[0,T],\ x\in \X;
\end{align}
where $\varphi\in B_b(\X)$ (the space of the real-valued, bounded and Borel measurable functions). By mild solution of (\ref{eqF}) we mean that for every $x\in\X$ there exists a $\X$-valued adapted stochastic process $\set{X(t,x)}_{t\geq 0}$ satisfying the mild form of \eqref{eqF}, namely for $x\in\X$ and $t\in[0,T]$ it holds
\begin{equation}\label{mildF}
X(t,x)=e^{tA}x+\int_0^te^{(t-s)A}F(X(s,x))ds+\int_0^te^{(t-s)A}Q^{\alpha}dW(s),
\end{equation}
and such that $\mathbb{P}(\int_0^T\norm{X(s,x)}^2ds<+\infty)=1$, for any $x\in\X$. 
The aim of this paper is to show that, under suitable assumptions, the semigroup $P(t)$, defined in \eqref{transition}, maps $B_b(\X)$ into the space of Lipschitz continuous functions along an appropriate continuously embedded subspace of $\X$. To be more precise we introduce some notations and hypotheses. We say that a function $\varphi:\X\ra \R$ is $Y$-Lipschitz, where $Y$ is a continuously embedded subspace of $\X$ with norm $\norm{\cdot}_Y$, if there exists $L>0$ such that for every $x\in\X$ and $y\in Y$
\[
\abs{\varphi(x+y)-\varphi(x)}\leq L\norm{y}_Y.
\]
\begin{hyp}\label{hyp0}
Let $T>0$ and let $\alpha\in [0,1/2]$. Let $\X$ be a real separable Hilbert space with inner product $\scal{\cdot}{\cdot}$ and norm $\norm{\cdot}$. We assume that
\begin{enumerate}[\rm(i)]
\item \label{hyp0.1} $Q$ is a linear, self-adjoint, (strictly) positive, trace class operator on $\X$ and let
\[A:=-(1/2)Q^{2\alpha-1}:Q^{1-2\alpha}(\X)\subseteq \X\ra\X;\]


\item\label{hyp0.3}  there exists $\gamma\in(0,1)$ such that for any $t\in(0,T]$
\begin{align}\label{PD}
\int_0^ts^{-\gamma}\Tr[e^{2sA}Q^{2\alpha}] ds<+\infty,
\end{align}
where $\Tr$ denotes the trace operator (see \eqref{traccie}).
\end{enumerate}
\end{hyp}

\noindent We remark that by Hypothesis \ref{hyp0}\eqref{hyp0.1}, \cite[Section II Corollary 4.7]{EN-NA2} and \cite[Theorem 2.3.15]{EN-NA1}, $A$ is the infinitesimal generator of a strongly continuous, analytic and contraction semigroup $e^{tA}$ on $\X$. Hypothesis \ref{hyp0}\eqref{hyp0.3} is standard in the literature, since it guarantees that the mild solution of \eqref{eqF} is path-continuous. We remark that this condition may appear different from the one in \cite[Theorem 5.11]{DA-ZA4}, because the authors of \cite{DA-ZA4} use a Hilbert--Schmidt norm for operators from $Q^{1/2}(\X)$ to $\X$. Their condition becomes \eqref{PD} in our case. We stress that since $Q^\alpha$ can be written as a negative power of the operator $-A$, then \eqref{eqF} can be rewritten in the following way 
\begin{gather*}
\eqsys{
dX(t,x)=\big(AX(t,x)+F(X(t,x))\big)dt+ (-2A)^{\alpha/(2\alpha-1)}dW(t), & t\in(0,T];\\
X(0,x)=x\in \X,
}
\end{gather*}
where the smoothing effect of the diffusion on the noise is more evident.

Hypotheses \ref{hyp0} and the assumption of Lipschitz continuity of $F$ are classical for the study of the existence and uniqueness of the solution of \eqref{eqF}. Moreover, for $\alpha\in [0,1/2)$, if some further conditions are assumed, it is possible to prove that for any $t\in(0,T]$
\begin{equation}\label{strong1}
P(t)\left(B_b(\X)\right)\subseteq \lip_b(\X)
\end{equation}
where $\lip_b(\X)$ is the space of the bounded and Lipschitz continuous functions on $\X$. 
There is a vast literature dealing with similar types of smoothing properties. See for example \cite{LOR1,MET-PAL-WAC1,TAI1} for an overview in the finite dimensional case and \cite{BI-FE1,DA-ZA1,PES-ZA1,ZAB1} for the infinite dimensional case.

The main result of this paper is a regularization result similar to \eqref{strong1} for the transition semigroup $P(t)$, defined in \eqref{transition}, with some non-standard hypotheses on $F$.
Let $H_{\alpha}:=Q^{\alpha}(\X)$ and for every $h,k\in H_{\alpha}$
\begin{align*}
\scal{h}{k}_\alpha:=\langle Q^{-\alpha}h,Q^{-\alpha}k\rangle,
\end{align*}
\noindent then $(H_{\alpha},\scal{\cdot}{\cdot}_\alpha)$ is a Hilbert space continuously embedded in $\X$. We denote by $\norm{\cdot}_{\alpha}$ the norm induced by $\scal{\cdot}{\cdot}_\alpha$ on $H_\alpha$. 
Our setting is similar to the one of \cite{ES-ST1}, although there a different problem (existence of an invariant measure for a stochastic Cahn--Hilliard type equation) was considered.

\begin{defi}
Let $Y,Z$ be two Hilbert spaces, endowed with the norms $\norm{\cdot}_Y$ and $\norm{\cdot}_Z$ respectively, and let $\Phi:\X\ra Z$ be a Borel measurable function. Assume that $Y$ is continuously embedded in $\X$. We say that $\Phi$ is $Y$-Lipschitz when there exists $C>0$ such that for every $x\in\X$ and $y\in Y$
\begin{equation}\label{LipY}
\norm{\Phi(x+y)-\Phi(x)}_Z\leq C\norm{y}_Y.
\end{equation}
We denote by $\lip_{Y}(\X;Z)$ the sets of Borel measurable, $Z$-valued and $Y$-Lipschitz functions, and by $\lip_{b,Y}(\X;Z)$ the subset of $\lip_{Y}(\X;Z)$ consisting of bounded functions. If $Z=\R$ we simply write $\lip_{b,Y}(\X)$. We call $Y$-Lipschitz constant of $\Phi$ the infimum of all the constants $C>0$ verifying \eqref{LipY}.
\end{defi}
Now we state the hypotheses we will use throughtout the paper.
 
\begin{hyp}\label{hyp1}
Let Hypotheses \ref{hyp0} hold true and let $F:\X\rightarrow\X$ be a Borel measurable (possibly unbounded) function such that
\begin{enumerate}[\rm(i)]
\item $F(\X)\subseteq H_{\alpha}$ and $F$ is $H_{\alpha}$-Lipschitz, with $H_\alpha$-Lipschitz constant $L_{F,\alpha}$;
\item \label{opl} if $\alpha\in [1/4,1/2]$, then we assume that $F:\X\ra H_\alpha$ is locally bounded.
\end{enumerate} 
\end{hyp}
Let us make some considerations about these assumptions. The requirement that $F(\X)$ is contained in $H_\alpha$ is not uncommon, for example the case $F=-Q^{2\alpha}\D U$ where $U:\X\rightarrow \R$ is a suitable convex function, often appears in the literature (see \cite{AD-CA-FE1,AN-FE-PA1,CAP-FER1,CAP-FER2,FER1} for $\alpha=1/2$, \cite{DA2,DA-LU2,DA-LU3} for $\alpha=0$ and \cite{DA-TU1} for general $\alpha$). A condition similar to Hypothesis \ref{hyp1}\eqref{opl} was already considered in \cite{DA-FL2}. 
\noindent We stress that Hypothesis \ref{hyp1} does not imply that $F$ is continuous. 
In Section \ref{exA} we show some examples of functions $F$ satisfying Hypotheses \ref{hyp1}. Now we state the main result of this paper.

\begin{thm}\label{Hstrong}
Assume that Hypotheses \ref{hyp1} hold. Then, for any $t\in(0,T]$, the semigroup $P(t)$ maps the space $B_b(\X)$ to $\lip_{b,H_\alpha}(\X)$. More precisely for every $\varphi\in B_b(\X)$, $x\in\X$, $h\in H_{\alpha}$ and $t\in (0,T]$ it holds
\[\vert P(t)\varphi(x+h)-P(t)\varphi(x)\vert\leq \frac{e^{L_{F,\alpha}T}}{\sqrt{t}}\Vert\varphi\Vert_{\infty}\norm{h}_{\alpha}.\]
\end{thm}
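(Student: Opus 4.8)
The natural route is a Bismut--Elworthy--Li (BEL) type formula for the derivative of $P(t)\varphi$ along directions in $H_\alpha$. The plan is to first establish the bound for smooth cylindrical $\varphi$ (where $P(t)\varphi$ is differentiable in the directions that matter) and then pass to general $\varphi\in B_b(\X)$ by a standard approximation/monotone class argument, using that the final estimate only involves $\norm{\varphi}_\infty$. So fix $\varphi$ bounded and, say, Fr\'echet differentiable with bounded derivative, fix $x\in\X$ and $h\in H_\alpha$. I would differentiate $t\mapsto X(t,x)$ with respect to $x$ in the direction $h$: the first variation $\eta^h(t):=D_xX(t,x)h$ solves the (random, linear) equation $\tfrac{d}{dt}\eta^h(t)=A\eta^h(t)+DF(X(t,x))\eta^h(t)$, $\eta^h(0)=h$. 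Since $F$ is $H_\alpha$-Lipschitz and $DF(X(t,x))$ maps $\X$ into $H_\alpha$ with norm controlled by $L_{F,\alpha}$ (the key structural point: the drift's derivative lands in the ``good'' subspace $H_\alpha=Q^\alpha(\X)$, exactly the range of the noise coefficient), one gets a Gronwall bound of the shape $\norm{\eta^h(t)}_\alpha\le e^{L_{F,\alpha}t}\norm{h}_\alpha$, where the analytic-contraction property of $e^{tA}$ on $\X$ and its compatibility with $Q$ handle the leading term. Here one should be a bit careful: the estimate is most naturally done by writing $Q^{-\alpha}\eta^h(t)$ and using that $e^{tA}$ commutes with powers of $Q$.

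The second ingredient is the BEL representation. Writing the noise as $Q^\alpha dW(t)$ and exploiting $h\in H_\alpha$, so that $Q^{-\alpha}h\in\X$ is an admissible ``control'', one runs the Elworthy--Li argument: for a suitable predictable process $\bm{u}$ with $Q^\alpha\bm{u}(s)=\eta^h(s)$ (concretely $\bm{u}(s)=Q^{-\alpha}\eta^h(s)$, which is well-defined since $\eta^h(s)\in H_\alpha$ by the previous step — again using $\Ran DF\subseteq H_\alpha$), one derives
\[
\scal{D_xP(t)\varphi(x)}{h}=\frac1t\,\E\!\left[\varphi(X(t,x))\int_0^t\scal{Q^{-\alpha}\eta^h(s)}{dW(s)}\right].
\]
This is the step I expect to be the main obstacle: justifying the Malliavin/martingale manipulation rigorously when $F$ is only $H_\alpha$-Lipschitz and possibly discontinuous (Hypothesis \ref{hyp1} explicitly does not force continuity of $F$), and when for $\alpha\in[1/4,1/2]$ one only has local boundedness of $F:\X\to H_\alpha$. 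The clean way around the lack of smoothness of $F$ is to approximate $F$ by smooth $H_\alpha$-Lipschitz functions $F_n$ with uniformly bounded $H_\alpha$-Lipschitz constants, prove the BEL identity and the resulting estimate for the corresponding semigroups $P_n(t)$, and then pass to the limit using continuous dependence of the mild solution on $F$ together with the boundedness of $\varphi$; the local-boundedness hypothesis for $\alpha\ge1/4$ is what is needed to keep the stochastic convolution and the approximation under control in that regime.

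Finally, from the BEL identity, Cauchy--Schwarz and the It\^o isometry give
\[
|\scal{D_xP(t)\varphi(x)}{h}|\le\frac1t\norm{\varphi}_\infty\left(\E\int_0^t\norm{Q^{-\alpha}\eta^h(s)}^2ds\right)^{1/2}\le\frac1t\norm{\varphi}_\infty\left(\int_0^t e^{2L_{F,\alpha}s}\,ds\right)^{1/2}\norm{h}_\alpha\le\frac{e^{L_{F,\alpha}T}}{\sqrt t}\norm{\varphi}_\infty\norm{h}_\alpha,
\]
where I used the first-variation bound $\norm{Q^{-\alpha}\eta^h(s)}=\norm{\eta^h(s)}_\alpha\le e^{L_{F,\alpha}s}\norm{h}_\alpha$ and crudely estimated $\int_0^t e^{2L_{F,\alpha}s}ds\le t\,e^{2L_{F,\alpha}T}$. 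Integrating this derivative bound along the segment $[x,x+h]$ (which lies in $x+H_\alpha$) yields $|P(t)\varphi(x+h)-P(t)\varphi(x)|\le t^{-1/2}e^{L_{F,\alpha}T}\norm{\varphi}_\infty\norm{h}_\alpha$ for smooth $\varphi$, and a density/monotone-class argument extends it verbatim to all $\varphi\in B_b(\X)$, proving the theorem. The semigroup/Markov property is what upgrades the pointwise-in-$t$ statement and shows the bound is uniform as claimed.
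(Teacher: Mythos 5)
Your proposal is correct and follows essentially the same route as the paper: a Bismut--Elworthy--Li formula along $H_\alpha$-directions, a Gronwall bound $\norm{\eta^h(t)}_\alpha\le e^{L_{F,\alpha}T}\norm{h}_\alpha$ for the first variation (which stays in $H_\alpha$ precisely because the drift increments do), the It\^o-isometry estimate, and a double approximation in $F$ (smooth $H_\alpha$-Lipschitz approximants with uniform constants) and in $\varphi$. The only presentational difference is that the paper makes the ``work in $H_\alpha$'' step explicit by passing to the shifted process $Z_x(t,h)=X(t,x+h)-e^{tA}x$, which is the device that rigorously handles the discontinuity of $F$ and the case $\alpha=1/2$; your sketch contains the same idea implicitly.
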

For $\alpha\in[0,1/2)$, the regularization result of Theorem \ref{Hstrong} is weaker than \eqref{strong1}, but we emphasize that we do not assume that $F$ is Lipschitz continuous on $\X$.
We were also interested to see what result can be obtained by assuming more standard assumptions on $F$. In Section \ref{proofH} we are going to use the same techniques as in the proof of Theorem \ref{Hstrong} to the case in which $F$ is Lipschiz continuous, and we prove the following result.

\begin{thm}\label{Hstrongvar}
Assume that Hypotheses \ref{hyp0} hold. Let $F:\X\rightarrow\X$ be a function such that $F(\X)\subseteq H_{\alpha}$ and $Q^{-\alpha}F$ is Lipschitz continuous with Lipschitz constant $K_{F,\alpha}$. Then
\begin{enumerate}[\rm (a)]
\item\label{furmi} for any $t\in(0,T]$, $P(t)(B_b(\X))\subseteq \lip_b(\X)$, if $\alpha\in [0,1/2)$;
\item for any $t\in(0,T]$, $P(t)(B_b(\X))\subseteq \lip_{b,H_{1/2}}(\X)$, if $\alpha=1/2$.
\end{enumerate}
\end{thm}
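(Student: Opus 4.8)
The plan is to mimic the proof of Theorem \ref{Hstrong} but exploit the stronger (global) Lipschitz continuity of $Q^{-\alpha}F$ to upgrade the conclusion from $H_\alpha$-Lipschitz to genuinely Lipschitz on all of $\X$ when $\alpha<1/2$. First I would recall the Bismut--Elworthy--Li type formula that underlies Theorem \ref{Hstrong}: if $\eta^h(t,x)$ denotes the directional derivative process $D_x X(t,x)h$ of the mild solution in the direction $h$, then for $h\in H_\alpha$ one has a representation of the form
\[
D_x(P(t)\varphi(x))h=\frac1t\,\E\!\left[\varphi(X(t,x))\int_0^t\scal{Q^{-\alpha}\eta^h(s,x)}{dW(s)}\right],
\]
giving $|D_x(P(t)\varphi(x))h|\le t^{-1/2}\|\varphi\|_\infty\big(\E\int_0^t\|Q^{-\alpha}\eta^h(s,x)\|^2\,ds\big)^{1/2}$. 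The variation equation for $\eta^h$ is $\partial_t\eta^h=A\eta^h+DF(X)\eta^h$, $\eta^h(0)=h$, whose mild form is $\eta^h(t)=e^{tA}h+\int_0^t e^{(t-s)A}DF(X(s))\eta^h(s)\,ds$.

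The key point for part \eqref{furmi} is the following: when $\alpha\in[0,1/2)$, the operator $Q^\alpha$ is bounded with bounded inverse only on $H_\alpha$, but I want an estimate for arbitrary $h\in\X$, not just $h\in H_\alpha$. Here is where the hypothesis that $Q^{-\alpha}F$ (equivalently $F=Q^\alpha G$ with $G$ globally $K_{F,\alpha}$-Lipschitz on $\X$) is used: writing $F=Q^\alpha\circ(Q^{-\alpha}F)$, the noise term $\int_0^t e^{(t-s)A}Q^\alpha\,dW(s)$ already lives in a smoothed subspace, and the smoothing $e^{(t-s)A}Q^\alpha$ appearing in the mild form of the variation equation converts a direction $h\in\X$ into something controllable. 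Concretely, I would estimate $\|Q^{-\alpha}\eta^h(s,x)\|$ by applying $Q^{-\alpha}$ to the mild form, using $Q^{-\alpha}DF(X(s)) = D(Q^{-\alpha}F)(X(s))$, which has operator norm $\le K_{F,\alpha}$, together with the commutation $Q^{-\alpha}e^{(t-s)A}=e^{(t-s)A}Q^{-\alpha}$ (both are functions of $Q$) and the bound $\|Q^{-\alpha}e^{rA}\|_{\mathcal L(\X)}\le c\, r^{-\alpha(1-2\alpha)^{-1}}$ — a standard analytic-semigroup estimate since $Q^{-\alpha}=(-2A)^{\alpha/(1-2\alpha)}$ and $A$ generates an analytic semigroup. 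For $\alpha\in[0,1/2)$ the exponent $\alpha/(1-2\alpha)\ge 0$ but is $<1$ precisely when $\alpha<1/3$; for $\alpha\in[1/3,1/2)$ the singularity $r^{-\alpha/(1-2\alpha)}$ is not integrable at $0$, so one must instead keep $Q^{-\alpha}$ on the $\eta^h$ factor and only move a fractional part onto the semigroup, or use a bootstrap/Gronwall argument on the quantity $\sup_{s\le t}s^{\beta}\|Q^{-\alpha}\eta^h(s,x)\|$ for a suitable weight $\beta$. In all cases a singular Gronwall (Gronwall--Henry) inequality closes the estimate and yields $\big(\E\int_0^t\|Q^{-\alpha}\eta^h(s,x)\|^2ds\big)^{1/2}\le C(T)\,t^{1/2-\alpha/(1-2\alpha)}\|h\|$ (or the analogous power), whence $|P(t)\varphi(x+h)-P(t)\varphi(x)|\le C(T)\,t^{-\sigma}\|\varphi\|_\infty\|h\|$ for some $\sigma\in(0,1)$, so $P(t)\varphi\in\lip_b(\X)$.

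For part (b), $\alpha=1/2$, the operator $Q^{-1/2}e^{rA}$ is no longer bounded in any useful negative power of $r$ (the exponent $\alpha/(1-2\alpha)$ blows up), so one cannot hope to control directions in all of $\X$; instead one takes $h\in H_{1/2}=Q^{1/2}(\X)$, in which case $Q^{-1/2}h\in\X$ and the variation-equation estimate goes through verbatim as in Theorem \ref{Hstrong} (indeed this is essentially the $\alpha=1/2$ instance of that theorem, the global Lipschitz hypothesis on $Q^{-1/2}F$ being stronger than the $H_{1/2}$-Lipschitz hypothesis there), giving $P(t)\varphi\in\lip_{b,H_{1/2}}(\X)$ with constant of order $t^{-1/2}$.

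The main obstacle I anticipate is handling the range $\alpha\in[1/3,1/2)$ in part \eqref{furmi}: the naive estimate $\|Q^{-\alpha}e^{rA}\|\le c\,r^{-\alpha/(1-2\alpha)}$ has a non-integrable singularity, so the Gronwall argument must be set up with the right weighted norm (or one must distribute the $Q^{-\alpha}$ smoothing between the semigroup factor and an $L^p$-in-time bound on $\eta^h$ itself, using that $DF(X(s))$ maps $\X$ into $H_\alpha$ boundedly) so that every time-integral that appears is finite. Once the correct weighted space $\{\,h\mapsto\sup_{s\le t}s^\beta\|Q^{-\alpha}\eta^h(s,x)\|\,\}$ is identified, the rest is routine: apply the singular Gronwall lemma, then Cauchy--Schwarz in the Bismut--Elworthy--Li formula, and collect the resulting power of $t$.
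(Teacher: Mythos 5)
Your strategy coincides with the paper's own in two of the three regimes. For $\alpha\in[0,1/4)$ the paper does exactly what you describe: the Bismut--Elworthy--Li formula with integrand $Q^{-\alpha}\J^G X(s,x)k$, the identity $Q^{-\alpha}\J F=\J(Q^{-\alpha}F)$ to bound the convolution term of the variation equation by $K_{F,\alpha}e^{L_{F,\alpha}T}\|k\|$, and the analytic-semigroup estimate $\|Q^{-\alpha}e^{sA}\|_{\mathcal{L}(\X)}\leq Cs^{-\alpha/(1-2\alpha)}$ for the free term. Your treatment of $\alpha=1/2$ is also correct: the hypothesis that $Q^{-1/2}F$ is Lipschitz does imply Hypotheses \ref{hyp1}, so one may either invoke Theorem \ref{Hstrong} directly, as you suggest, or rerun the argument in $\X$ as the paper does. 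However, your threshold is misplaced: the quantity that must be finite is $\E\int_0^t\|Q^{-\alpha}\J^G X(s,x)k\|^2\,ds$ (It\^o isometry), whose free-term contribution behaves like $\int_0^t s^{-2\alpha/(1-2\alpha)}ds\,\|k\|^2$ and converges iff $2\alpha/(1-2\alpha)<1$, i.e.\ $\alpha<1/4$ --- not $\alpha<1/3$, which is the threshold for the first power rather than the square.

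The genuine gap is the range $\alpha\in[1/4,1/2)$. The repair you propose --- a weighted norm $\sup_{s\leq t}s^{\beta}\|Q^{-\alpha}\eta^h(s,x)\|$ plus a singular Gronwall inequality --- cannot close the argument, because the divergence is not of Gronwall type: it sits entirely in the free term $Q^{-\alpha}e^{sA}h$ with $h\in\X\setminus H_\alpha$, while the convolution term is already uniformly bounded by $K_{F,\alpha}Te^{L_{F,\alpha}T}\|h\|$ thanks to $Q^{-\alpha}\J F=\J(Q^{-\alpha}F)$, so there is nothing to bootstrap. One gets $\|Q^{-\alpha}\eta^h(s,x)\|\leq Cs^{-\alpha/(1-2\alpha)}\|h\|$ with no improvement possible at $s=0$, and then $\E\int_0^t\|Q^{-\alpha}\eta^h(s,x)\|^2ds=+\infty$ for $\alpha\geq 1/4$; the bound $C(T)\,t^{1/2-\alpha/(1-2\alpha)}\|h\|$ you write down is the formal antiderivative of a non-integrable singularity. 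Consequently the stochastic integral in your BEL representation is not a square-integrable martingale and the representation itself fails. The paper circumvents this by abandoning formula \eqref{raaaa} on this range and using instead the Malliavin/Girsanov-based identity of Bonaccorsi--Fuhrman, \eqref{varbonbon}, where the direction enters through $Q_t^{-2\alpha}\J^G X(t,x)k$ with $Q_t=Q(\Id-e^{2tA})$; boundedness there rests on the null-controllability inclusion $e^{tA}(\X)\subseteq Q_t^{1/2}(\X)$, valid for every $\alpha<1/2$, at the price of an implicit (non-$t^{-1/2}$) constant. To salvage your route one would instead need a weighted BEL formula $\scal{\D P(t)\varphi(x)}{h}=\E\big[\varphi(X(t,x))\int_0^t\lambda'(s)\scal{Q^{-\alpha}\eta^h(s,x)}{dW(s)}\big]$ with $\lambda'$ supported away from $s=0$ --- a different idea, absent from your proposal, which would require its own justification.
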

\noindent Statement \eqref{furmi} of Theorem \ref{Hstrongvar} was already proved in \cite{BON-FUR1,FUR1}. However, for $\alpha\in [0,1/4)$, our proof is simpler, because we can exploit the identity $A=-(1/2)Q^{2\alpha-1}$ and the analyticity of the semigroup $e^{tA}$. Instead the case $\alpha=1/2$ is not covered by \cite{BON-FUR1,FUR1}. In the papers \cite{FE-GO2,FE-GO1,MAS2,MAS1} the case $\alpha=1/2$ is considered, but, as we shall see in Section \ref{FUR}, the authors consider a different concept of derivative compared to the one presented in this paper in Section \ref{SpacialRegularity}.

Before proceeding we want to make some considerations about the results of this paper and some of the results already appeared in the literature. In \cite[Section 7.7]{DA-ZA1}, \cite{ES1} and \cite{PES-ZA1} the authors study a more general stochastic partial differential equation than \eqref{eqF}, but in our case their assumptions imply that $Q^{\alpha}$ has a continuous inverse, and in infinite dimension it makes sense only when $\alpha=0$, since $Q^{-\theta}$ is unbounded for every $\theta>0$. In \cite{CER1} the author proves \eqref{strong1} in an important case, namely when $A$ is the realization of a second order differential operator in $L^2(\Omega,d\xi)$ ($\Omega$ is an appropriate domain of $\R^n$, for some $n\in\N$, and $d\xi$ is the Lebesgue measure), and $F$ satisfies some technical conditions. In \cite{BON-FUR1,FUR1,MAS-SEI1} the authors work in a more general setting. However, the case $\alpha=1/2$ is not covered by their theory. Indeed one of the fundamental hypotheses assumed in \cite{BON-FUR1,FUR1,MAS-SEI1} is the following: for any $t\in(0,T]$
\begin{align}
e^{tA}(\X) \subseteq Q_t^{1/2}(\X),\label{Fii}
\end{align}
where $Q_tx=\int^t_0 e^{2sA}Q^{2\alpha}xds$. If Hypotheses \ref{hyp0} hold true and $\alpha\in [0,1/2)$, then \eqref{Fii} is verified. Indeed, in our case, $Q_t=Q(\Id-e^{2tA})$ and recalling that by the analyticity of $e^{tA}$ it holds that for any $t\in(0,T]$, the range of $e^{tA}$ is contained in the domain of $A^k$ for every $k\in\N$ (see \cite[Proposition 2.1.1(i)]{LUN1}), it is sufficient to prove that $(\Id-e^{2tA})$ is invertible. Since $2A$ is negative, we have $\|e^{2tA}\|_{\mathcal{L}(\X)}<1$, and so $(\Id-e^{2tA})$ is invertible. In particular $Q_t^{1/2}(\X)= Q^{1/2}(\X)$ and so we get \eqref{Fii}. Instead for $\alpha=1/2$ condition \eqref{Fii} is not verified, because $A=-(1/2)\Id$ and so $e^{-(1/2)t\Id}\X=\X$, for any $t\in(0,T]$. Moreover in \cite{BON-FUR1,FUR1,MAS-SEI1} the authors assume that $F$ is Lipschitz continuous, while our Hypotheses \ref{hyp1} do not imply Lipschitz continuity of $F$. 


The paper is organized in the following way: in Section \ref{lip} we introduce the notation we will use throughout the paper and recall some classical results for the stochastic partial differential equation \eqref{eqF} when $F$ is a Lipschitz continuous function. In Section \ref{ExAndUni} we show that when Hypotheses \ref{hyp1} hold true, then \eqref{eqF} admits a unique mild solution. In Section \ref{SpacialRegularity} we introduce a gradient operator along $H_\alpha$ and show that the mild solution of \eqref{eqF}, when Hypotheses \ref{hyp1} hold true, enjoys some regularity properties with respect to this gradient operator. In Section \ref{StrongFellerProperty} we show a new version of the Bismuth--Elworthy--Li formula and we prove Theorem \ref{Hstrong}. Section \ref{proofH} is dedicated to the proof of Theorem \ref{Hstrongvar}. In Section \ref{FUR} we will compare our results to those in the literature. Finally in Section \ref{exA} we will show some examples of functions $F$ satisfying Hypotheses \ref{hyp1} or the hypotheses of Theorem \ref{Hstrongvar}. In particular, we shall consider abstract Cahn--Hilliard type equations such as in \cite{ES-ST1}.

\section{Notation and preliminary results}\label{lip}

Let $H_1$ and $H_2$ be two real Hilbert spaces with inner products $\gen{\cdot,\cdot}_{H_1}$ and $\gen{\cdot,\cdot}_{H_2}$ respectively. We denote by $\mathcal{B}(H_1)$ the family of the Borel subsets of $H_1$ and by $B_b(H_1;H_2)$ the set of the $H_2$-valued, bounded and Borel measurable functions. We denote by $C^k_b(H_1;H_2)$, $k\geq 0$ the set of the $k$-times Fr\'echet differentiable functions from $H_1$ to $H_2$ with bounded derivatives up to order $k$. If $H_2=\R$ we simply write $C_b^k(H_1).$ For a function $\Phi\in C_b^1(H_1;H_2)$ we denote by $\J \Phi(x)$ the derivative operator of $\Phi$ at the point $x\in H_1$. If $f\in C_b^1(H_1)$, for every $x\in H_1$ there exists a unique $k\in H_1$ such that for every $h\in H_1$
\[\J f(x)(h)=\gen{h,k}_{H_1}.\]
We let $\D f(x):=k$. If $\Phi:H_1\ra H_2$ is Gateaux differentiable we denote by $\J^G\Phi(x)$ the Gateaux derivative operator of $\Phi$ at the point $x\in H_1$. See \cite[Chapter 7]{FAB1}.

Let $B\in\mathcal{L}(\X)$ (the set of bounded linear operators from $\X$ to itself). We say that $B$ is \emph{non-negative} (\emph{positive}) if for every $x\in \X\setminus\set{0}$
\[\gen{Bx,x}\geq 0\ (>0).\]
In the a same way we define the non-positive (negative) operators. We recall that a non-negative and self-adjoint operator $B\in\mathcal{L}(\X)$ is a \emph{trace class operator} whenever 
\begin{align}\label{traccie}
\Tr[B]:=\sum_{n=1}^{+\infty}\scal{Be_n}{e_n}<+\infty,
\end{align}
for some (and hence, every) orthonormal basis $\{e_n\}_{n\in\N}$ of $\X$. We recall that the trace is independent of the choice of the basis. See \cite[Section XI.6 and XI.9]{DUN-SCH2}.


Let $(\Omega,\mathcal{F},\{\mathcal{F}_t\}_{t\geq 0},\mathbb{P})$ be a complete, filtered probability space. We denote by $\mathbb{E}[\cdot]$ the expectation with respect to $\mathbb{P}$. Let $Y$ be a Banach space. If $\xi:(\Omega,\mathcal{F},\mathbb{P})\ra (Y,\mathcal{B}(Y))$ is a random variable, we denote by 
\[\mathscr{L}(\xi):=\mathbb{P}\circ\xi^{-1}\] 
the law of $\xi$ on $(Y,\mathcal{B}(Y))$. Throughout the paper when we refer to a process we mean a process defined on $(\Omega,\mathcal{F},\{\mathcal{F}_t\}_{t\geq 0},\mathbb{P})$. Now we make a remark about the notion of Wiener process in a Hilbert space.

\begin{rmk}\label{MBX}
Let $E$ be a separable Hilbert space and let $S$ be a self-adjoint and positive operator from $E$ to itself. If $S$ is a trace class operator, we call genuine $E$-valued Wiener process with $S$ as covariance operator a $E$-valued adapted process $\{W(t)\}_{t\geq 0}$ such that
\begin{enumerate}[\rm(i)]
\item $W(0)=0$ and $\mathscr{L}(W(t)-W(s))$ is the Gaussian measure with mean zero and covariance operator $(t-s)S$ on $E$, where $S$ is an operator satisfying Hypothesis \ref{hyp0}\eqref{hyp0.1}.\label{MBX1}

\item for $n\in\N$ and $0\leq t_1<t_2<\cdots<t_n$ the random variables $W(t_1)$, $W(t_2)-W(t_1)$,\ldots, $W(t_n)-W(t_{n-1})$ are independent;\label{MBX2}

\item for $\mathbb{P}$-almost every $\omega\in\Omega$, $W(\cdot,\omega)$ is a continuous function on $[0,+\infty)$. This condition is called path-continuity.\label{MBX3}
\end{enumerate}
If $S$ is not a trace class operator, it is however possible to define a generalized $E$-valued Wiener process with $S$ as covariance matrix (see \cite[Section 4.1.2]{DA-ZA4}, \cite[Section 2.5.1]{LI-RO1} and \cite[Section 1]{PES-ZA1}). In this paper we call $E$-valued $S$-Wiener process both genuine $E$-valued Wiener process and generalized $E$-valued Wiener process. In particular we call $E$-valued cylindrical Wiener process a generalized $E$-valued Wiener process with covariance operator $\Id_E$.
\end{rmk}

We remark that if Hypotheses \ref{hyp0} hold, then the right hand side of \eqref{mildF} is well defined. Indeed it is enough to show that the process 
\[
\{W_A(t)\}_{t\geq 0}:=\left\lbrace\int_0^te^{(t-s)A}Q^{\alpha}dW(s)\right\rbrace_{t\geq 0}
\]
is well defined. By \cite[Theorem 5.2 and Theorem 5.11]{DA-ZA4}, if Hypothesis \ref{hyp0}\eqref{hyp0.3} holds true, then $\{W_A(t)\}_{t\geq 0}$ is Gaussian, continuous in mean square and it has a continuous and predictable version (see \cite[Section 3.3]{DA-ZA4}). 


\begin{defi}\label{prog}
For $T>0$ and $p\geq 1$, we denote by $\X^p([0,T])$ the space of the progressively measurable (see \cite[Section 3.3]{DA-ZA4}) $\X$-valued processes $\{\psi(t)\}_{t\in[0,T]}$ such that
\[
\norm{\psi}^p_{\X^p([0,T])}:=\sup_{t\in [0,T]}\mathbb{E}\big[\norm{\psi(t)}^p\big]<+\infty.
\]
\end{defi}
We now state a general result in the theory of stochastic partial differential equations with Lipschitz continuous nonlinearities.
\begin{thm}\label{smild}
Let Hypotheses \ref{hyp0} hold. Let $T>0$ and let $\Phi:[0,T]\times\X\ra\X$ be measurable as a function from the $\sigma$-field $([0,T]\times\Omega\times\X,\mathcal{G}_T\times\mathcal{B}(\X))$ to $(\X,\mathcal{B}(\X))$, where $\mathcal{G}_T$ is the restriction to $[0,T]\times \Omega$ of the $\sigma$-field generated by the sets
\[(v,w]\times J,\;\; 0\leq v\leq w<+\infty,\;\; J\in\mathcal{F}_v.\] 
Assume that there exists $y\in \X$ such that the map $t\mapsto \Phi(t,y)$  from $[0,T]$ to $\X$ is $L^2$-summable, namely
\begin{align}\label{inty}
\int_0^T\|\Phi(t,y)\|^2dt<+\infty,
\end{align}
and that $\Phi$ is a Lipschitz continuous function on $\X$ uniformly with respect to $t\in [0,T]$, i.e. for every $x,y\in \X$ and $t\in [0,T]$, it holds
\begin{gather*}
\norm{\Phi(t,x)-\Phi(t,y)}\leq L_\Phi\norm{x-y};
\end{gather*}
where $L_\Phi>0$ is a constant independent of $t,x$ and $y$. Consider the stochastic partial differential equation
\begin{gather}\label{lalala}
\eqsys{
dX(t,x)=\big(AX(t,x)+\Phi(t,X(t,x))\big)dt+ Q^{\alpha}dW(t), & t\in(0,T];\\
X(0,x)=x\in \X.
}
\end{gather}
For each $x\in\X$, (\ref{lalala}) has unique mild solution $X(t,x)$ in $\X^2([0,T])$ such that
\begin{enumerate}[\rm(a)]
\item $X(\cdot,x)$ is $\mathbb{P}$-a.s. continuous in $[0,T]$;

\item The map $x\mapsto X(\cdot,x)$ from $\X$ to $\X^2([0,T])$ is Lipschitz continuous.
\end{enumerate}
\end{thm}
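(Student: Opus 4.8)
The plan is to realize $X(\cdot,x)$ as the unique fixed point of an affine contraction on the Banach space $\X^2([0,T])$, following the classical scheme for semilinear stochastic evolution equations with Lipschitz drift; everything ultimately reduces to the contractivity $\norm{e^{tA}}_{\mathcal{L}(\X)}\leq 1$ and to elementary $L^2$ estimates. As a preliminary step, recall that, as noted after \eqref{mildF}, Hypothesis \ref{hyp0}\eqref{hyp0.3} together with \cite[Theorems 5.2 and 5.11]{DA-ZA4} guarantees that the stochastic convolution
\[
W_A(t):=\int_0^t e^{(t-s)A}Q^{\alpha}\,dW(s)
\]
is well defined, belongs to $\X^2([0,T])$, and admits a predictable, $\mathbb{P}$-a.s. continuous version; since $e^{tA}$ is a contraction semigroup, the map $t\mapsto e^{tA}x$ is continuous and lies in $\X^2([0,T])$ as well.

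\emph{The fixed point.} On $\X^2([0,T])$ define
\[
\Gamma(Y)(t):=e^{tA}x+\int_0^t e^{(t-s)A}\Phi(s,Y(s))\,ds+W_A(t),\qquad t\in[0,T].
\]
The measurability assumption on $\Phi$ relative to $\mathcal{G}_T\times\mathcal{B}(\X)$ ensures that $s\mapsto\Phi(s,Y(s))$ is progressively measurable whenever $Y$ is, so $\Gamma(Y)$ is again a progressively measurable $\X$-valued process once its $L^2$-boundedness is checked; for the latter I would split $\Phi(s,Y(s))=\big(\Phi(s,Y(s))-\Phi(s,y)\big)+\Phi(s,y)$ with $y$ as in \eqref{inty}, bound the first summand by $L_\Phi\norm{Y(s)-y}$ and the second via \eqref{inty}, and then apply Cauchy--Schwarz in $s$ and $\norm{e^{(t-s)A}}_{\mathcal{L}(\X)}\leq 1$. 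For the contraction, the Lipschitz bound on $\Phi$ and $\norm{e^{(t-s)A}}_{\mathcal{L}(\X)}\leq 1$ give, for $Y_1,Y_2\in\X^2([0,T])$ and all $t\in[0,T]$,
\[
\mathbb{E}\big[\norm{\Gamma(Y_1)(t)-\Gamma(Y_2)(t)}^2\big]\leq L_\Phi^2\,t\int_0^t\mathbb{E}\big[\norm{Y_1(s)-Y_2(s)}^2\big]\,ds ,
\]
and iterating this inequality $n$ times yields $\norm{\Gamma^n(Y_1)-\Gamma^n(Y_2)}_{\X^2([0,T])}^2\leq\tfrac{(L_\Phi^2T^2)^n}{n!}\norm{Y_1-Y_2}_{\X^2([0,T])}^2$, so $\Gamma^n$ is a strict contraction for $n$ large. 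By the Banach fixed point theorem $\Gamma$ has a unique fixed point $X(\cdot,x)\in\X^2([0,T])$, which is exactly the unique mild solution of \eqref{lalala} in that space.

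\emph{Properties (a) and (b).} By the splitting above and \eqref{inty}, $s\mapsto\Phi(s,X(s,x))$ belongs $\mathbb{P}$-a.s. to $L^2([0,T];\X)$; since the convolution of an $L^2$-in-time map against the strongly continuous semigroup $e^{tA}$ is continuous in $t$, and $t\mapsto e^{tA}x$ and the selected version of $W_A$ are continuous, the mild identity exhibits a $\mathbb{P}$-a.s. continuous version of $X(\cdot,x)$, giving (a). For (b), subtracting the mild formulas for $x_1,x_2\in\X$ (the $W_A$ terms cancel) and arguing as above gives
\[
\mathbb{E}\big[\norm{X(t,x_1)-X(t,x_2)}^2\big]\leq 2\norm{x_1-x_2}^2+2L_\Phi^2\,T\int_0^t\mathbb{E}\big[\norm{X(s,x_1)-X(s,x_2)}^2\big]\,ds ,
\]
and Gronwall's lemma yields $\sup_{t\in[0,T]}\mathbb{E}\big[\norm{X(t,x_1)-X(t,x_2)}^2\big]\leq 2e^{2L_\Phi^2T^2}\norm{x_1-x_2}^2$, i.e. the Lipschitz continuity of $x\mapsto X(\cdot,x)$ from $\X$ into $\X^2([0,T])$.

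The argument is essentially standard, so no step is genuinely hard here: the usual obstacle in such statements, the path-continuity of the mild solution, has its delicate part — the continuity of $W_A$ — already settled by \cite{DA-ZA4}, leaving only the deterministic convolution, which is continuous by strong continuity of $e^{tA}$. The only other point requiring a little care is the possible unboundedness of $\Phi$, which is dealt with throughout by anchoring all estimates at the base point $y$ of \eqref{inty}.
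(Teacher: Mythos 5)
Your proof is correct and follows essentially the same route as the paper's: a fixed-point argument for the Volterra map on $\X^2([0,T])$, with all estimates anchored at the base point $y$ of \eqref{inty} and relying on the contractivity of $e^{tA}$, plus the Gronwall argument for uniqueness and Lipschitz dependence. The only cosmetic difference is that you obtain the contraction by iterating $\Gamma^n$ (the $n!$ trick) rather than by restricting to a small time interval and patching, and you spell out the path-continuity and Lipschitz-dependence steps that the paper delegates to \cite[Theorem 7.5]{DA-ZA4}.
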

\noindent Condition \eqref{inty} is weaker than the one assumed in \cite[Theorem 7.5]{DA-ZA4}, namely there exists a constant $C_{F}>0$ such that, for every $t\in(0,T]$ and $x\in\X$, we have
\[\norm{\Phi(t,x)}\leq C_{F}(1+\norm{x}).\]
Instead \eqref{inty} is enough to prove the same results of \cite[Theorem 7.5]{DA-ZA4} which are used in this paper. The only difference is that the mild solution belongs to $\X^2([0,T])$ and not to all the $\X^p([0,T])$ space, with $p\geq 2$. We will give a proof of Theorem \ref{smild} in Appendix \ref{proofsmild} and we will give an example of a function $\Phi$ satisfying \eqref{inty}, but not satisfying the hypotheses of \cite[Theorem 7.5]{DA-ZA4}, in Section \ref{ex_1/2}.

If $F:\X\ra\X$ is Lipschitz continuous, then, by Theorem \ref{smild}, the transition semigroup
\[
P(t)\varphi(x):=\mathbb{E}[\varphi(X(t,x))],\qquad t\in[0,T],\ x\in\X,\ \varphi\in B_b(\X),
\]
is well defined, where $X(t,x)$ is the mild solution of \eqref{eqF}. Now we state a regularity result for the spatial derivative of the mild solution of \eqref{lalala}.

\begin{thm}\label{desmild}
In addition to the assumptions of Theorem \ref{smild}, assume $\Phi:[0,T]\times\X\ra\X$ is such that the map $x\mapsto \Phi(t,x)$ is Gateaux differentiable for every $t\in[0,T]$ and there exists $C>0$ such that for every $t\in[0,T]$ and $x,y\in\X$ it holds 
\[\|\J^G\Phi(t,x)y\|\leq C\|y\|.\]
Let $X(t,x)$ be the mild solution of \eqref{lalala}. Then the map $x\mapsto X(\cdot,x)$ from $\X$ to $\X^2([0,T])$ is Gateaux differentiable at $x_0\in\X$ with bounded and continuous directional derivatives. Moreover, for every $x_0,h\in\X$ and $t\in[0,T]$, the process $Y(t,h)=\J^G X(t,x_0) h$ is the unique mild solution of 
\[\eqsys{
dY(t,h)=(AY(t,h)+ (\J^G\Phi(t,X(t,x_0))Y(t,h))dt, & t\in(0,T];\\
 Y(0,h)=h\in\X.}
\]
\end{thm}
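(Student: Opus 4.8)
The plan is to follow the classical argument for differentiability of mild solutions with respect to the initial datum: solve the linearized (variational) equation by a contraction, and then control the difference quotient $\varepsilon^{-1}\big(X(\cdot,x_0+\varepsilon h)-X(\cdot,x_0)\big)-Y(\cdot,h)$ by a Gronwall estimate, using crucially that in \eqref{lalala} the stochastic convolution $W_A(t)=\int_0^t e^{(t-s)A}Q^{\alpha}\,dW(s)$ does not depend on $x$ and therefore cancels in every difference quotient. First I would check that, for fixed $x_0,h\in\X$, the linear equation
\[
Y(t,h)=e^{tA}h+\int_0^t e^{(t-s)A}\,\J^G\Phi(s,X(s,x_0))Y(s,h)\,ds,\qquad t\in[0,T],
\]
has a unique mild solution $Y(\cdot,h)\in\X^2([0,T])$. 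Since there is no stochastic integral, this is a (random, time dependent) linear Volterra equation, and the contraction principle on a short interval — using $\norm{e^{tA}}_{\mathcal L(\X)}\le1$ and $\norm{\J^G\Phi(s,\cdot)y}\le C\norm{y}$ — together with a concatenation argument gives existence and uniqueness, while Gronwall's lemma gives $\sup_{t\in[0,T]}\E[\norm{Y(t,h)}^2]\le C_T\norm{h}^2$ for a suitable constant $C_T=C_T(C,T)>0$. Linearity of the equation in $h$ makes $h\mapsto Y(\cdot,h)$ linear and bounded, hence continuous, from $\X$ to $\X^2([0,T])$; this will give the ``bounded and continuous directional derivatives'' part of the statement once $\J^G X(\cdot,x_0)$ is identified with $Y$.

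Next I would introduce, for $\varepsilon\neq0$, the difference quotient
\[
Z_\varepsilon(t):=\frac{X(t,x_0+\varepsilon h)-X(t,x_0)}{\varepsilon}-Y(t,h),\qquad t\in[0,T].
\]
Subtracting the mild formulations \eqref{mildF} of $X(\cdot,x_0+\varepsilon h)$ and $X(\cdot,x_0)$, the linear terms produce $e^{tA}h$ and the stochastic convolutions cancel, so that $\varepsilon^{-1}\big(X(t,x_0+\varepsilon h)-X(t,x_0)\big)$ equals $e^{tA}h$ plus $\int_0^t e^{(t-s)A}\varepsilon^{-1}\big(\Phi(s,X(s,x_0+\varepsilon h))-\Phi(s,X(s,x_0))\big)\,ds$. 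Writing the integrand as $\J^G\Phi(s,X(s,x_0))$ applied to $\varepsilon^{-1}\big(X(s,x_0+\varepsilon h)-X(s,x_0)\big)$, plus the first order remainder $\varepsilon^{-1}R_\varepsilon(s)$, where
\[
R_\varepsilon(s):=\Phi(s,X(s,x_0+\varepsilon h))-\Phi(s,X(s,x_0))-\J^G\Phi(s,X(s,x_0))\big(X(s,x_0+\varepsilon h)-X(s,x_0)\big),
\]
and subtracting the variational equation, one obtains the closed identity
\[
Z_\varepsilon(t)=\int_0^t e^{(t-s)A}\,\J^G\Phi(s,X(s,x_0))Z_\varepsilon(s)\,ds+\int_0^t e^{(t-s)A}\,\frac{R_\varepsilon(s)}{\varepsilon}\,ds.
\]

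Applying to this identity the same Cauchy--Schwarz and Gronwall estimate as before gives $\sup_{t\in[0,T]}\E[\norm{Z_\varepsilon(t)}^2]\le C_T\int_0^T\E\big[\norm{R_\varepsilon(s)/\varepsilon}^2\big]\,ds$, so it remains to prove that the right-hand side tends to $0$ as $\varepsilon\to0$. By Theorem \ref{smild}(b) the quotients $\varepsilon^{-1}\big(X(\cdot,x_0+\varepsilon h)-X(\cdot,x_0)\big)$ are bounded in $\X^2([0,T])$ uniformly in $\varepsilon$, and $X(\cdot,x_0+\varepsilon h)\to X(\cdot,x_0)$ in $\X^2([0,T])$; together with $\norm{R_\varepsilon(s)}\le(L_\Phi+C)\norm{X(s,x_0+\varepsilon h)-X(s,x_0)}$ this provides an integrable majorant for the integrand. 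Pointwise smallness of $\varepsilon^{-1}\norm{R_\varepsilon(s)}$ follows from the Gateaux differentiability of $\Phi(s,\cdot)$, evaluated pathwise at $X(s,x_0)$, and a dominated convergence argument on $[0,T]\times\Omega$ — after extracting an $\mathbb P$-a.s.\ convergent subsequence, using the $\mathbb P$-a.s.\ path continuity of $X(\cdot,x)$ from Theorem \ref{smild}(a) — yields $\norm{Z_\varepsilon}_{\X^2([0,T])}\to0$. Hence $x\mapsto X(\cdot,x)$ is Gateaux differentiable at $x_0$ with $\J^G X(\cdot,x_0)h=Y(\cdot,h)$, and boundedness and continuity of the directional derivatives follow from the first step.

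The main obstacle is precisely this last limit passage. Since $\Phi(t,\cdot)$ is only assumed Gateaux differentiable with a bounded derivative — no continuity of $x\mapsto\J^G\Phi(t,x)$ is postulated — the quantity $\varepsilon^{-1}\norm{R_\varepsilon(s)}$ is not controlled uniformly over all directions, and one must exploit that the specific directions $X(s,x_0+\varepsilon h)-X(s,x_0)$ are themselves $\X^2$-small and converge pathwise along a subsequence, so that Gateaux differentiability can be invoked jointly with dominated convergence. Everything else — well-posedness of the variational equation, the cancellation of the noise, the Gronwall estimates, and the linearity and continuity of $h\mapsto Y(\cdot,h)$ — is routine.
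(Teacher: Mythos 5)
The paper does not prove Theorem \ref{desmild} itself: it defers to \cite[Theorem 9.8]{DA-ZA4}, whose proof runs through the abstract result on Gateaux differentiability of fixed points of parameter-dependent contractions. Your direct route (variational equation plus a Gronwall bound on the difference quotient) is a legitimate alternative in principle, and the first two steps --- well-posedness of the linearized equation, cancellation of the stochastic convolution, and the closed identity for $Z_\varepsilon$ --- are fine. The problem is the step you yourself single out as the main obstacle: it does not work as you describe it. Writing $k_\varepsilon(s):=X(s,x_0+\varepsilon h)-X(s,x_0)$, you claim that $\varepsilon^{-1}\norm{R_\varepsilon(s)}\to 0$ pointwise ``from the Gateaux differentiability of $\Phi(s,\cdot)$ evaluated pathwise at $X(s,x_0)$''. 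Gateaux differentiability only controls increments of the form $t w$ with $w$ \emph{fixed} and $t\to 0$; here the increment is $k_\varepsilon$, whose normalized direction $k_\varepsilon/\varepsilon$ is exactly the quantity whose convergence you are trying to establish. Knowing that $k_\varepsilon\to 0$ pathwise along a subsequence (which is all your subsequence extraction gives) is not enough: in infinite dimensions one can have $k_\varepsilon=\varepsilon u_\varepsilon$ with $\set{u_\varepsilon}$ bounded but with no convergent subsequence, and then $\varepsilon^{-1}\bigl(\Phi(x+k_\varepsilon)-\Phi(x)-\J^G\Phi(x)k_\varepsilon\bigr)$ need not vanish for a merely Gateaux differentiable Lipschitz $\Phi$. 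As written the argument is circular (it needs $Z_\varepsilon\to 0$ to prove $Z_\varepsilon\to 0$). A secondary issue: your ``integrable majorant'' is only a family bounded in $L^2$ uniformly in $\varepsilon$, which does not dominate $\norm{R_\varepsilon/\varepsilon}^2$ by a single integrable function.

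The gap is fixable, and the fix stays within your framework: recenter the remainder on the \emph{fixed} direction $Y(s,h)$ rather than on the varying increment. Decompose
\begin{align*}
\frac{R_\varepsilon(s)}{\varepsilon}&=\underbrace{\frac{\Phi\bigl(s,X+\varepsilon Y+\varepsilon Z_\varepsilon\bigr)-\Phi\bigl(s,X+\varepsilon Y\bigr)}{\varepsilon}-\J^G\Phi(s,X)Z_\varepsilon(s)}_{\text{bounded in norm by }(L_\Phi+C)\norm{Z_\varepsilon(s)}}\\
&\qquad+\underbrace{\frac{\Phi\bigl(s,X+\varepsilon Y\bigr)-\Phi(s,X)}{\varepsilon}-\J^G\Phi(s,X)Y(s,h)}_{=:\delta_\varepsilon(s)},
\end{align*}
with $X=X(s,x_0)$ and $Y=Y(s,h)$. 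The term $\delta_\varepsilon(s)$ tends to $0$ pathwise by Gateaux differentiability along the fixed (random, but $\varepsilon$-independent) direction $Y(s,h)$, and is dominated by $(L_\Phi+C)\norm{Y(s,h)}\in L^2([0,T]\times\Omega)$, so $\int_0^T\E[\norm{\delta_\varepsilon(s)}^2]ds\to 0$ by dominated convergence; the first term is absorbed into the Gronwall inequality for $\E[\norm{Z_\varepsilon(t)}^2]$ at the cost of a larger constant. With this modification your proof closes and, unlike the cited \cite[Theorem 9.8]{DA-ZA4}, uses no continuity of $x\mapsto\J^G\Phi(t,x)$ beyond the stated uniform bound.
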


\noindent For a proof of Theorem \ref{desmild} we refer to \cite[Theorem 9.8]{DA-ZA4} and the arguments used in Appendix \ref{proofsmild}. We conclude the section by recalling a result that we will use in the next sections (see \cite[Lemma 2.3]{PES-ZA1}).

\begin{lemm}\label{rapphi}
Assume Hypotheses \ref{hyp0} hold true. Let $F\in C_{b}^2(\X;\X)$ and $\varphi\in C_{b}^2(\X)$. If $X(t,x)$ is the mild solution of \eqref{eqF} and $P(t)$ is the transition semigroup defined in \eqref{transition}, then for each $t\in(0,T]$, $P(t)\varphi\in C^2_{b}(\X)$ and
\begin{equation}\label{raaaa}
\varphi(X(t,x))=P(t)\varphi(x)+\int_0^t\scal{\D P(t-s)\varphi(X(s,x))}{Q^{\alpha}dW(s)},\quad \mathbb{P}\text{-a.s.}
\end{equation}
\end{lemm}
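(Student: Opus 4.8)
The plan is to read off \eqref{raaaa} from It\^o's formula applied to the process $M_s:=P(t-s)\varphi(X(s,x))$ on $[0,t]$, once we know that applying It\^o's formula is legitimate, i.e. once we have established $P(r)\varphi\in C_b^2(\X)$. Observe that $M_0=P(t)\varphi(x)$ (since $X(0,x)=x$) and $M_t=P(0)\varphi(X(t,x))=\varphi(X(t,x))$, so that \eqref{raaaa} is precisely the assertion that $M$ has vanishing drift with diffusion part $\scal{\D P(t-s)\varphi(X(s,x))}{Q^\alpha dW(s)}$. The martingale structure is already visible from the Markov property, which gives $M_s=\E[\varphi(X(t,x))\mid\F_s]$; but identifying the integrand explicitly still requires the It\^o computation below.

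\textbf{Regularity of $P(t)\varphi$.} First I would show $P(t)\varphi\in C_b^2(\X)$ for every $t\in(0,T]$. Since $F\in C_b^2(\X;\X)$, its first and second Gateaux derivatives are bounded, so Theorem \ref{desmild} (and its second-order analogue, obtained by differentiating once more the linear equation solved by $Y(t,h)=\J^GX(t,x_0)h$, the second derivative solving an affine equation driven by the product of two first derivatives) shows that $x\mapsto X(t,x)$ is twice Gateaux differentiable from $\X$ to $\X^2([0,T])$. A Gronwall estimate on the equations solved by the first and second derivatives, using the contractivity of $e^{tA}$ and the boundedness of the first and second derivatives of $F$, yields bounds of the form $\norm{\J^GX(t,x)h}\le e^{L_FT}\norm{h}$ in $\X^2([0,T])$, uniform in $x$, with $L_F:=\sup_y\norm{\J^GF(y)}$, and an analogous bound for the second derivative. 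Because $\varphi\in C_b^2(\X)$, dominated convergence then justifies differentiating twice under the expectation in $P(t)\varphi(x)=\E[\varphi(X(t,x))]$, giving bounded and continuous $\D P(t)\varphi$ and $\D^2P(t)\varphi$; in particular $r\mapsto P(r)\varphi$ is $C^1$ with values in $C_b^2(\X)$.

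\textbf{It\^o's formula and drift cancellation.} Writing $u(s,y):=P(t-s)\varphi(y)$, I would apply the It\^o formula for mild solutions of semilinear equations (as in \cite{DA-ZA4}) to $s\mapsto u(s,X(s,x))$. Introducing the Kolmogorov operator
\[
\mathcal{K}g(y):=\tfrac12\Tr[Q^{2\alpha}\D^2g(y)]+\scal{F(y)}{\D g(y)}+\scal{y}{A^*\D g(y)},
\]
defined for $g\in C_b^2(\X)$ with $\D g(y)\in\DO(A^*)$, the drift of $M_s$ equals $(\partial_s u+\mathcal{K}u)(s,X(s,x))$, while the diffusion part is $\scal{\D u(s,X(s,x))}{Q^\alpha dW(s)}$. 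Since $P(r)\varphi$ solves the Kolmogorov equation $\frac{d}{dr}P(r)\varphi=\mathcal{K}P(r)\varphi$, we have $\partial_s u+\mathcal{K}u=0$, the drift vanishes identically, and integrating from $0$ to $t$ gives \eqref{raaaa}. Using $A^*\D u$ in place of the ill-defined $\scal{A\cdot}{\D u}$ is legitimate because the smoothing of $e^{rA^*}$ places $\D P(r)\varphi(y)$ in $\DO(A^*)$.

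\textbf{Main obstacle and its resolution.} The hard part is making both It\^o's formula and the Kolmogorov equation rigorous: the mild solution $X(s,x)$ does not take values in $\DO(A)$, and $Q^{2\alpha}$ is in general not trace class when $\alpha<1/2$, so that $\Tr[Q^{2\alpha}\D^2u]$ is finite only thanks to the extra regularization carried by $\D^2P(r)\varphi$ (controlled, through the stochastic convolution, by Hypothesis \ref{hyp0}\eqref{hyp0.3}). I would deal with this by a finite-dimensional Galerkin approximation. Let $\{e_k\}$ be the orthonormal basis of eigenvectors of $Q$, which also diagonalizes $A$, let $\pi_N$ be the orthogonal projection onto $\Span\{e_1,\dots,e_N\}$, and consider the projected equation with solution $X_N$ taking values in $\pi_N\X$, where $A$ is bounded and the noise $\pi_NQ^\alpha dW$ has finite trace. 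For $X_N$ and the associated semigroup $P_N$ the classical finite-dimensional It\^o formula and Kolmogorov equation apply verbatim, and all terms, including the trace, are finite, yielding \eqref{raaaa} with $X,P$ replaced by $X_N,P_N$. It then remains to pass to the limit $N\to\infty$: one has $X_N(\cdot,x)\to X(\cdot,x)$ in $\X^2([0,T])$ and, by stability of the linearized equations, $\D P_N(t-s)\varphi\to\D P(t-s)\varphi$; by the It\^o isometry, together with the uniform bound $\norm{\D P_N(t-s)\varphi}_\infty\le e^{L_FT}\norm{\D\varphi}_\infty$ and the convergence of $\E\int_0^t\scal{Q^{2\alpha}\D P_N(t-s)\varphi}{\D P_N(t-s)\varphi}\,ds$, the stochastic integrals converge in $L^2(\Omega)$ to the right-hand side of \eqref{raaaa}. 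Controlling these convergences uniformly, in particular the gradient convergence and the trace term, is the technical heart of the argument.
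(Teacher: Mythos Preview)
The paper does not actually prove this lemma: immediately before the statement it says ``We conclude the section by recalling a result that we will use in the next sections (see \cite[Lemma 2.3]{PES-ZA1}),'' and no proof is given. So there is no ``paper's own proof'' to compare with; the result is simply imported from Peszat--Zabczyk.

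Your proposal is a reasonable sketch of the standard argument and is in the same spirit as the proof in \cite{PES-ZA1}: establish $C_b^2$ regularity of $P(t)\varphi$ by differentiating under the expectation (using the regularity of $x\mapsto X(t,x)$ granted by Theorem \ref{desmild} and its second-order analogue), then obtain \eqref{raaaa} from It\^o's formula applied to $(s,y)\mapsto P(t-s)\varphi(y)$ together with the backward Kolmogorov equation, with a Galerkin/spectral projection to make the formal computation rigorous. Two small remarks: since in this paper $A=-(1/2)Q^{2\alpha-1}$ is self-adjoint, your passage from $\langle Ay,\D u\rangle$ to $\langle y,A^*\D u\rangle$ is really about $A$ itself, and the claim that $\D P(r)\varphi(y)\in\DO(A)$ is exactly the point that the Galerkin step is meant to bypass rather than something you can invoke directly; likewise, the assertion that $r\mapsto P(r)\varphi$ is $C^1$ with values in $C_b^2(\X)$ is stronger than what the rest of your argument needs and is nontrivial in infinite dimensions, so it is cleaner to avoid it by working at the approximate level and passing to the limit only in the final identity, as you do in your last paragraph.
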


\vspace{0.3cm}
\begin{center}
\textsc{\small From here on, all the results involving processes must be understood as valid $\mathbb{P}$-a.s. for $t$ fixed.}
\end{center}
\vspace{0.3cm}

\section{Regularization results}\label{STRONGHI}

This section is devoted to the proof of Theorem \ref{Hstrong}. We start with some basic facts about the space $H_\alpha$.

\begin{prop}\label{propHalpha}
Assume that Hypotheses \ref{hyp0} hold true and let $H_{\alpha}:=Q^{\alpha}(\X)$. For every $h,k\in H_{\alpha}$ we set
\begin{align}\label{proscal}
\scal{h}{k}_\alpha:=\langle Q^{-\alpha}h,Q^{-\alpha}k\rangle.
\end{align}
Then $(H_{\alpha},\norm{\cdot}_\alpha)$ is a separable Hilbert space continuously embedded in $\X$, where $\norm{\cdot}_\alpha$ is the norm associated to the inner product in \eqref{proscal} and 
\begin{align}
\norm{h}\leq \|Q^{\alpha}\|_{\mathcal{L}(H_\alpha)}\norm{h}_\alpha,\label{Halpha1}
\end{align}
for every $h\in H_\alpha$. Furthermore the following holds
\begin{enumerate}[\rm(a)]

\item $Q^{\alpha}$ is linear and bounded from $H_\alpha$ to itself;\label{Halpha3}

\item $e^{tA}$ is a contraction semigroup in $H_\alpha$;\label{Halpha4}

\item $H_\alpha$ is dense in $\X$.\label{Halpha5}

\item $H_\alpha$ is a Borel subset of $\X$.\label{Halpha6}

\item $W_\alpha(t):=Q^{\alpha}W(t)$ is a $H_\alpha$-valued $Q^{2\alpha}$-Wiener process.\label{MBH}
\end{enumerate}
\end{prop}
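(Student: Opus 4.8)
The plan is to verify the five claimed properties of $H_\alpha$ directly from the definitions, using that $Q^{\alpha}$ is an injective bounded operator (since $Q$ is strictly positive) and that $Q^{-\alpha}$ is a densely defined self-adjoint operator. First I would set up the basic structure: since $Q^{-\alpha}:H_\alpha\to\X$ is a bijective isometry by the very definition of $\scal{\cdot}{\cdot}_\alpha$, completeness of $(H_\alpha,\norm{\cdot}_\alpha)$ follows from completeness of $\X$, and separability is inherited likewise; this gives the Hilbert space structure. For the continuous embedding and \eqref{Halpha1}, I would write $h=Q^{\alpha}(Q^{-\alpha}h)$ and estimate $\norm{h}=\norm{Q^{\alpha}(Q^{-\alpha}h)}\leq\norm{Q^{\alpha}}_{\mathcal{L}(H_\alpha)}\norm{Q^{-\alpha}h}=\norm{Q^{\alpha}}_{\mathcal{L}(H_\alpha)}\norm{h}_\alpha$, once part \eqref{Halpha3} is established — so in practice I would prove \eqref{Halpha3} first.

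For \eqref{Halpha3}, boundedness of $Q^{\alpha}:H_\alpha\to H_\alpha$: given $h\in H_\alpha$, I note $Q^\alpha h\in H_\alpha$ since $Q^\alpha h = Q^\alpha(Q^\alpha(Q^{-\alpha}h))$ with $Q^{-\alpha}h\in\X$, hence $\norm{Q^\alpha h}_\alpha=\norm{Q^{-\alpha}Q^\alpha h}=\norm{h}\leq\norm{Q^\alpha}_{\mathcal L(\X)}\norm{Q^{-\alpha}h} = \norm{Q^\alpha}_{\mathcal L(\X)}\norm{h}_\alpha$; wait — this needs care, since $\norm{h}\le \norm{Q^\alpha}_{\mathcal L(\X)}\norm{Q^{-\alpha}h}$ requires writing $h=Q^\alpha Q^{-\alpha}h$, which is exactly legitimate. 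So $Q^\alpha$ is bounded on $H_\alpha$ with the same norm bound as on $\X$. For \eqref{Halpha4}, I would use that $e^{tA}$ commutes with $Q$ (both are functions of $Q$, since $A=-(1/2)Q^{2\alpha-1}$), hence commutes with $Q^{\pm\alpha}$ on appropriate domains; therefore for $h\in H_\alpha$, $e^{tA}h\in H_\alpha$ and $\norm{e^{tA}h}_\alpha=\norm{Q^{-\alpha}e^{tA}h}=\norm{e^{tA}Q^{-\alpha}h}\leq\norm{Q^{-\alpha}h}=\norm{h}_\alpha$, using that $e^{tA}$ is a contraction on $\X$. For \eqref{Halpha5}, density of $H_\alpha$ in $\X$: since $Q^\alpha$ is injective and self-adjoint (strictly positive), its range $H_\alpha$ has trivial orthogonal complement, so $H_\alpha$ is dense. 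For \eqref{Halpha6}, Borel measurability of $H_\alpha\subseteq\X$: $H_\alpha=Q^\alpha(\X)$ is the continuous injective image of a Polish space, hence a Borel subset by the Lusin–Souslin theorem; alternatively, $H_\alpha=\bigcup_{n}Q^\alpha(\ol{B_\X(0,n)})$ is a countable union of compact (hence closed) sets since $Q^\alpha$ is compact, so it is $\sigma$-compact and in particular Borel — I would use this second, more elementary argument.

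For \eqref{MBH}, that $W_\alpha(t):=Q^\alpha W(t)$ is an $H_\alpha$-valued $Q^{2\alpha}$-Wiener process: here $W(t)$ is a cylindrical ($\Id$-covariance) $\X$-valued Wiener process, so $Q^\alpha W(t)$ should have covariance $Q^\alpha (Q^\alpha)^* = Q^{2\alpha}$ when viewed in $\X$; but the claim is about $H_\alpha$-valued, so I need to identify the covariance operator relative to the inner product $\scal{\cdot}{\cdot}_\alpha$. The natural statement is that $W_\alpha$ has, as covariance on $H_\alpha$, the operator $R$ with $\scal{Rh}{k}_\alpha = \scal{Q^{2\alpha}h}{k}$ appropriately interpreted — I would check the increments are Gaussian with the right covariance, independent, and path-continuous, by pushing forward the corresponding properties of $W$ through the bounded map $Q^\alpha:\X\to H_\alpha$ (which is an isometry composed with inclusion, essentially). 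I would invoke Remark \ref{MBX} for what "$Q^{2\alpha}$-Wiener process" means (possibly generalized). The main obstacle I anticipate is being careful about \eqref{MBH}: keeping straight which space ($\X$ versus $H_\alpha$) the covariance operator lives on, and whether the resulting process is a genuine (trace-class covariance) or generalized Wiener process — since $Q^{2\alpha}$ as an operator on $H_\alpha$ need not be trace class even though $Q$ is trace class on $\X$. The other parts are essentially routine manipulations with the functional calculus for the self-adjoint operator $Q$ and the definition of $\norm{\cdot}_\alpha$.
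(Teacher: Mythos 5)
Your proposal is correct and, on the only part the paper actually argues (statement \eqref{MBH}), it follows the same route: push the properties of $W$ forward through the continuous map $Q^{\alpha}:\X\ra H_\alpha$, with the same caveat (which you correctly anticipate and the paper does not resolve either) about whether the covariance $Q^{2\alpha}$ is read against $\scal{\cdot}{\cdot}$ or $\scal{\cdot}{\cdot}_\alpha$ and about the process being only a generalized Wiener process. For statements \eqref{Halpha3}--\eqref{Halpha6} the paper simply declares the claims standard and leaves them to the reader, so your arguments --- the isometry $Q^{-\alpha}:H_\alpha\ra\X$ giving completeness and \eqref{Halpha1}, commutation of $e^{tA}$ with powers of $Q$, density of the range of an injective self-adjoint operator, and $\sigma$-compactness of $Q^{\alpha}(\X)$ --- supply exactly the details the paper omits, and they are all sound.
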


\begin{proof}
Statements \eqref{Halpha3}-\eqref{Halpha6} are standard (e.g. \cite{DA-ZA1}) and their proofs are left to the reader.
Statement \eqref{MBH} follows noting that $Q^\alpha:\X\ra H_\alpha$ is continuous and the Borel subsets of $H_\alpha$ are Borel subsets of $\X$ (see \cite[Remark 5.1]{DA-ZA4}).
\end{proof}

\noindent We remark that if $\alpha=0$ then $H_\alpha=\X$. The study of the mild solution of \eqref{eqF} and of the transition semigroup \eqref{transition}, when $\alpha=0$, is already present in the literature, see for example \cite{DA-ZA1,FUR1}. Instead $H_{1/2}=Q^{1/2}(\X)$ is the Cameron--Martin space associated to the Gaussian measure with mean zero and covariance operator $Q$ on $\X$. This space is of fundamental interest for the Malliavin calculus, see for example \cite{BOGIE1,DA-ZA4}.
 
We conclude this introductory section with a lemma about a function that will be important throughout the rest of the paper.

\begin{lemm}\label{acuqa}
Let Hypotheses \ref{hyp1} hold true. For every $x\in\X$ and $t\in(0,T]$ the function $F_{x,t}: H_{\alpha}\rightarrow H_{\alpha}$ defined as
\[
F_{x,t}(h):=F(h+e^{tA}x),\qquad h\in H_{\alpha},
\]
is Lipschitz continuous, and 
\begin{align}
\int_0^T\|F_{x,s}(0)\|^2_\alpha ds<+\infty.\label{jj}
\end{align}
\end{lemm}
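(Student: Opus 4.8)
The plan is to prove the two claims separately, both reducing to properties already encoded in Hypotheses \ref{hyp1} together with the structure of $A$ and $Q$.

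\medskip

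\textbf{Lipschitz continuity of $F_{x,t}$.} First I would observe that since $F:\X\ra H_\alpha$ is $H_\alpha$-Lipschitz (Hypothesis \ref{hyp1}(i)), for $h,k\in H_\alpha$ we have $h+e^{tA}x,\,k+e^{tA}x\in\X$ with difference $(h+e^{tA}x)-(k+e^{tA}x)=h-k\in H_\alpha$, so directly
\[
\norm{F_{x,t}(h)-F_{x,t}(k)}_\alpha=\norm{F(h+e^{tA}x)-F(k+e^{tA}x)}_\alpha\leq L_{F,\alpha}\norm{h-k}_\alpha.
\]
This is immediate from the definition of $H_\alpha$-Lipschitz, so there is essentially nothing to do here; the only subtlety is making sure the shift $e^{tA}x$ lands in $\X$ (it does, since $e^{tA}$ is a contraction semigroup on $\X$), not that it need lie in $H_\alpha$ — the $H_\alpha$-Lipschitz property only requires the increment to be in $H_\alpha$.

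\medskip

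\textbf{Verifying the integrability \eqref{jj}.} Here I need to bound $\norm{F_{x,s}(0)}_\alpha=\norm{F(e^{sA}x)}_\alpha=\norm{Q^{-\alpha}F(e^{sA}x)}$. Using $H_\alpha$-Lipschitzianity of $F$ with increment $0$ is not directly available since $0$ need not be an $H_\alpha$-shift of $e^{sA}x$; instead I would split into two cases according to Hypothesis \ref{hyp1}(ii). If $\alpha\in[0,1/4)$, I expect one can still control $\norm{F(e^{sA}x)}_\alpha$ by the $H_\alpha$-Lipschitz bound plus $\norm{F(0)}_\alpha$ — but note $F(e^{sA}x)-F(0)$ need not have increment in $H_\alpha$ either, so more care is needed; the cleanest route is: for any $y\in\X$, pick $h_0\in H_\alpha$ (e.g. $h_0=0$ if $y\in H_\alpha$, else approximate), and in general use that the analyticity of $e^{sA}=e^{-(1/2)sQ^{2\alpha-1}}$ gives $e^{sA}x\in\DO(A^k)\subseteq H_\alpha$ for every $s>0$ — in fact $e^{sA}x=Q^{\alpha}\cdot\big(Q^{-\alpha}e^{sA}x\big)$ with $Q^{-\alpha}e^{sA}$ bounded for $s>0$ by analyticity, with operator norm of order $s^{-\alpha/(1-2\alpha)}$ (for $\alpha\in[0,1/2)$). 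Thus $e^{sA}x\in H_\alpha$ for all $s>0$, and then applying $H_\alpha$-Lipschitzianity between $e^{sA}x$ and $0$ (both in $H_\alpha$, with increment $e^{sA}x\in H_\alpha$):
\[
\norm{F(e^{sA}x)}_\alpha\leq\norm{F(0)}_\alpha+L_{F,\alpha}\norm{e^{sA}x}_\alpha\leq\norm{F(0)}_\alpha+L_{F,\alpha}\,C\,s^{-\alpha/(1-2\alpha)}\norm{x}.
\]
Squaring and integrating over $[0,T]$, the singularity $s^{-2\alpha/(1-2\alpha)}$ is integrable precisely when $2\alpha/(1-2\alpha)<1$, i.e. $\alpha<1/4$, which matches the regime where no extra hypothesis is imposed. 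For $\alpha\in[1/4,1/2]$, the above estimate degrades, and this is where Hypothesis \ref{hyp1}(ii) (local boundedness of $F:\X\ra H_\alpha$) enters: since $s\mapsto e^{sA}x$ is continuous from $[0,T]$ into $\X$, its image is compact, hence bounded, so by local boundedness $\sup_{s\in[0,T]}\norm{F(e^{sA}x)}_\alpha<+\infty$, giving \eqref{jj} trivially. For $\alpha=1/2$ note $e^{sA}=e^{-s/2}\Id$, so $e^{sA}x\in\X$ and local boundedness again suffices.

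\medskip

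\textbf{Main obstacle.} The routine part is the Lipschitz estimate; the delicate point is the integrability \eqref{jj} in the borderline/large-$\alpha$ range, where one must correctly invoke the smoothing of the analytic semigroup generated by a fractional power of $Q$ and identify the exact exponent $\alpha/(1-2\alpha)$, and then hand off to the local-boundedness hypothesis exactly at $\alpha=1/4$. I would be careful to state the analytic-smoothing bound $\norm{Q^{-\alpha}e^{sA}}_{\mathcal L(\X)}\leq C s^{-\alpha/(1-2\alpha)}$ precisely (it follows from $A=-\tfrac12 Q^{2\alpha-1}$ and the standard estimate $\norm{(-A)^\beta e^{sA}}_{\mathcal L(\X)}\leq C_\beta s^{-\beta}$ with $\beta=\alpha/(1-2\alpha)$, since $Q^{-\alpha}=(2)^{-\beta}(-A)^{\beta}$ up to constants), and to note that for $\alpha=0$ everything is trivial since $H_0=\X$ and $F_{x,s}(0)=F(e^{sA}x)$ with $F$ globally Lipschitz on $\X$, hence of linear growth.
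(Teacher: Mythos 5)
Your proposal is correct and follows essentially the same route as the paper: the Lipschitz bound is immediate from the $H_\alpha$-Lipschitz property since the increment $h-k$ lies in $H_\alpha$, and for \eqref{jj} you split at $\alpha=1/4$, using the analytic-smoothing estimate $\norm{Q^{-\alpha}e^{sA}}_{\mathcal{L}(\X)}\leq C s^{-\alpha/(1-2\alpha)}$ (so that $e^{sA}x\in H_\alpha$ and the squared singularity $s^{-2\alpha/(1-2\alpha)}$ is integrable exactly for $\alpha<1/4$) and invoking Hypothesis \ref{hyp1}\eqref{opl} for $\alpha\in[1/4,1/2]$. The only difference is that you spell out the local-boundedness step via compactness of $\set{e^{sA}x: s\in[0,T]}$, which the paper leaves implicit.
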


\begin{proof}
The Lipschitz continuity is an easy consequence of Hypothesis \ref{hyp1}. If $\alpha\in [1/4,1/2]$, condition \eqref{jj} follows by Hypothesis \ref{hyp1}\eqref{opl}. Instead, if $\alpha\in [0,1/4)$, by \cite[Proposition 2.1.1]{LUN1} and recalling that $A=-(1/2)Q^{2\alpha-1}$ and that $e^{sA}x$ belongs to $H_\alpha$ for every $s>0$ and $x\in\X$ (due to the analyticity of $e^{sA}$), we have 
\begin{align*}
\int_0^T\|F_{x,s}(0)\|^2_\alpha ds&=\int_0^T\|F(e^{sA}x)\|^2_\alpha ds\\
& \leq 2\max\set{L_{F,\alpha},\|F(0)\|_\alpha^2}\int_0^T(1+\|e^{sA}x\|^2_\alpha) ds\\ 
&= 2\max\set{L_{F,\alpha},\|F(0)\|_\alpha^2}\pa{T+ \int_0^T\|e^{sA}x\|^2_\alpha ds}\\
&= 2\max\set{L_{F,\alpha},\|F(0)\|_\alpha^2}\pa{ T+ \int_0^T\|Q^{-\alpha}e^{sA}x\|^2 ds}\\
&\leq 2\max\set{L_{F,\alpha},\|F(0)\|_\alpha^2}\pa{T+ \norm{x}^2\int_0^T\frac{C_\alpha}{t^{(2\alpha)/(1-2\alpha)}} ds}\\
&\leq 2\max\set{L_{F,\alpha},\|F(0)\|_\alpha^2}T\pa{1+ C_\alpha\frac{2\alpha-1}{4\alpha-1}T^{\frac{2\alpha}{2\alpha-1}}\norm{x}^2}<+\infty,
\end{align*}
for some positive constant $C_\alpha$.
\end{proof}

\subsection{Existence and uniqueness}\label{ExAndUni}

Now we want to show that Hypotheses \ref{hyp1} are sufficient to guarantee the existence and uniqueness of the mild solution of \eqref{eqF}. We remark that the existence and the uniqueness of the mild solution of equation \eqref{eqF}, if $F$ lacks continuity, was already studied in \cite{DA-FL1} and \cite{DA-FL2}, under a set of hypotheses that differ from ours. We cannot use the results seen in Section \ref{lip}, since $F$ is not a Lipschitz continuous function on $\X$. Instead, similarly to \cite{ES-ST1}, we take $H_\alpha$ as the underlying Hilbert space. For $\alpha\in [0,1/2)$ we stress that by Hypotheses \ref{hyp1}, Lemma \ref{propHalpha}\eqref{MBH} and the analyticity of the semigroup $e^{tA}$, the mild solution $X(t,x)$ of \eqref{eqF} belongs to $H_\alpha$ for $t\in(0,T]$, but not for $t=0$, because $X(0,x)=x\in\X$. Instead, for $\alpha=1/2$, we cannot state that $X(t,x)$ belongs to $H_\alpha$ not even for $t\in(0,T]$, because in this case the condition $e^{tA}\X\subseteq H_\alpha$ is not verified ($A=-(1/2)\Id$). Hence, in order to work on $H_\alpha$, it is necessary to define an auxiliary stochastic partial differential equation associated to \eqref{eqF} whose mild solution is a $H_\alpha$-valued process.

To do so we observe that, at least formally, the process $\{X(t,x)-e^{tA}x\}_{t\in[0,T]}$, for $x\in\X$, solves the equation
\begin{gather*}
\eqsys{
dY(t,0)=\big(AY(t,0)+F_{x,t}(Y(t,0))\big)dt+ Q^{\alpha}dW(t), & t\in(0,T];\\
Y(0,0)=0,
}
\end{gather*} 
and $Y(t,0)$ belongs to $H_\alpha$ for every $t\in[0,T]$. Indeed, still formally,
\begin{align*}
d(X(t,x)-e^{tA}x)&=\big(AX(t,x)+F(X(t,x))\big)dt+ Q^{\alpha}dW(t)-Ae^{tA}x\\
&=\big(A(X(t,x)-e^{tA}x)+F(X(t,x)+e^{tA}x-e^{tA}x)\big)dt+ Q^{\alpha}dW(t)\\
&=\big(A(X(t,x)-e^{tA}x)+F_{x,t}(X(t,x)-e^{tA}x)\big)dt+ Q^{\alpha}dW(t)
\end{align*}
and $X(0,x)-x=0$. This procedure was the main idea behind the techniques used in section. Indeed, let $T>0$ and for every $x\in\X$ we consider the stochastic partial differential equation
\begin{gather}\label{eqcameron}
\eqsys{
dZ_x(t,h)=\big(AZ_x(t,h)+F_{x,t}(Z_x(t,h))\big)dt+ Q^{\alpha}dW(t), & t\in(0,T];\\
Z_x(0,h)=h\in H_{\alpha},
}
\end{gather}
and its mild solution, namely the process $\{Z_x(t,h)\}_{t\geq 0}$ such that for $t\in [0,T]$,
\[
Z_x(t,h)=e^{tA}h+\int_0^te^{(t-s)A}F_{x,s}(Z_x(s,h))ds+\int_0^te^{(t-s)A}Q^{\alpha}dW(s).
\]
The reason to study the behaviour of the mild solution of \eqref{eqcameron} for every $h\in H_\alpha$, and not only for $h=0$, will became clear in Section \ref{SpacialRegularity}. In order to show that \eqref{eqcameron} has a unique mild solution we introduce the spaces $H^2_\alpha([0,T])$ defined as in Definition \ref{prog}, with $H_\alpha$ replacing $\X$, endowed with the norm 
\[
\norm{\psi}^2_{H_\alpha^2([0,T])}:=\sup_{t\in [0,T]}\mathbb{E}\big[\norm{\psi(t)}_\alpha^2\big].
\]

\begin{prop}\label{existenceZ}
Assume Hypotheses \ref{hyp1} hold true and let $x\in\X$. For each $h\in H_\alpha$, \eqref{eqcameron} has unique mild solution $Z_x(t,h)$ in $H^2_\alpha([0,T])$ such that
\begin{enumerate}[\rm(a)]
\item $Z_x(\cdot,h)$ is $\mathbb{P}$-a.s. continuous in $[0,T]$;

\item The map $h\mapsto Z_x(\cdot,h)$ from $H_\alpha$ to $H_\alpha^2([0,T])$ is Lipschitz continuous.
\end{enumerate}
\end{prop}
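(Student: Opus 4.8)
The plan is to set up \eqref{eqcameron} as a fixed point problem in the Banach space $H_\alpha^2([0,T])$, exactly mirroring the proof of Theorem \ref{smild} but carried out on $H_\alpha$ instead of $\X$. The key structural facts that make this work are supplied by Proposition \ref{propHalpha} and Lemma \ref{acuqa}: first, by Proposition \ref{propHalpha}\eqref{MBH} the process $W_\alpha(t)=Q^\alpha W(t)$ is a genuine $H_\alpha$-valued $Q^{2\alpha}$-Wiener process, so the stochastic convolution $\int_0^t e^{(t-s)A}Q^\alpha\,dW(s)=\int_0^t e^{(t-s)A}\,dW_\alpha(s)$ is an $H_\alpha$-valued process; second, by Proposition \ref{propHalpha}\eqref{Halpha4}, $e^{tA}$ is a contraction semigroup on $H_\alpha$; third, by Lemma \ref{acuqa} the coefficient $F_{x,t}$ is Lipschitz continuous from $H_\alpha$ to $H_\alpha$ (with constant $L_{F,\alpha}$, independent of $t$ and $x$) and satisfies the integrability condition \eqref{jj}, which is precisely the $H_\alpha$-analogue of hypothesis \eqref{inty} in Theorem \ref{smild}. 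Moreover we must check that the stochastic convolution term lies in $H_\alpha^2([0,T])$ and is path-continuous: this is where Hypothesis \ref{hyp0}\eqref{hyp0.3} enters, since $\mathbb E[\|\int_0^t e^{(t-s)A}Q^\alpha\,dW(s)\|_\alpha^2]=\int_0^t\Tr[e^{2sA}Q^{2\alpha-2\alpha}\cdots]$ — more precisely one computes $\|Q^{-\alpha}\int_0^t e^{(t-s)A}Q^\alpha\,dW(s)\|$ and reduces the second moment to $\int_0^t\Tr[e^{2sA}Q^{2\alpha}]\,ds<+\infty$, so the same argument invoked after \eqref{mildF} (via \cite[Theorem 5.2 and Theorem 5.11]{DA-ZA4}) gives a continuous, predictable $H_\alpha$-valued version.

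Concretely, I would define the map $\Gamma:H_\alpha^2([0,T])\to H_\alpha^2([0,T])$ by
\[
(\Gamma\psi)(t):=e^{tA}h+\int_0^t e^{(t-s)A}F_{x,s}(\psi(s))\,ds+\int_0^t e^{(t-s)A}Q^\alpha\,dW(s),
\]
and first verify that $\Gamma$ maps $H_\alpha^2([0,T])$ into itself: the first term is bounded by $\|h\|_\alpha$ by contractivity; the deterministic convolution is estimated using $\|F_{x,s}(\psi(s))\|_\alpha\le \|F_{x,s}(0)\|_\alpha+L_{F,\alpha}\|\psi(s)\|_\alpha$ together with \eqref{jj} and Jensen/Hölder in the time variable; the stochastic convolution is handled as above. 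Then, following the standard device, I would show that for a suitable weighted norm $\norm{\psi}_\beta:=\sup_{t\in[0,T]}e^{-\beta t}(\mathbb E\|\psi(t)\|_\alpha^2)^{1/2}$ (equivalent to $\norm{\cdot}_{H_\alpha^2([0,T])}$), the map $\Gamma$ is a contraction for $\beta$ large, because the difference $\Gamma\psi_1-\Gamma\psi_2$ only involves the deterministic convolution of $F_{x,s}(\psi_1(s))-F_{x,s}(\psi_2(s))$, which is pointwise bounded by $L_{F,\alpha}\|\psi_1(s)-\psi_2(s)\|_\alpha$. Banach's fixed point theorem then yields the unique mild solution $Z_x(\cdot,h)\in H_\alpha^2([0,T])$. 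Statement (a) follows from the path-continuity of $e^{tA}h$, of the deterministic convolution (continuous since its integrand is in $L^2(0,T;H_\alpha)$ a.s.), and of the chosen version of the stochastic convolution. Statement (b) follows by applying the contraction estimate to two initial data $h_1,h_2$: one gets $\norm{Z_x(\cdot,h_1)-Z_x(\cdot,h_2)}_\beta\le \norm{e^{\cdot A}(h_1-h_2)}_\beta + (\text{const})\cdot L_{F,\alpha}\norm{Z_x(\cdot,h_1)-Z_x(\cdot,h_2)}_\beta$, hence Lipschitz dependence on $h$ in $H_\alpha$, possibly after a Gronwall-type absorption when $\beta$ is not taken large enough, or directly from the contraction factor $<1$.

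The main obstacle, and the only point where more than routine bookkeeping is needed, is establishing that the stochastic convolution $W_A(t)=\int_0^t e^{(t-s)A}Q^\alpha\,dW(s)$ is a well-defined, square-integrable, path-continuous $H_\alpha$-valued process — i.e.\ transporting the $\X$-valued statements following \eqref{mildF} to the space $H_\alpha$. The key identity is that, since $Q^{-\alpha}$ commutes with $e^{sA}=e^{-(s/2)Q^{2\alpha-1}}$ on the relevant domains, $Q^{-\alpha}W_A(t)=\int_0^t e^{(t-s)A}\,dW(s)$ has second moment $\int_0^t\Tr[e^{2sA}\,Q^{0}]\,ds$ — which would diverge — so one must instead keep one factor of $Q^{\alpha}$: more carefully, $\mathbb E\|W_A(t)\|_\alpha^2=\sum_n\mathbb E\langle Q^{-\alpha}W_A(t),e_n\rangle^2=\int_0^t\sum_n\|Q^{-\alpha}e^{sA}Q^\alpha e_n\|^2\,ds$; using self-adjointness and the commutation of $Q^{-\alpha}$, $e^{sA}$, $Q^\alpha$, this equals $\int_0^t\Tr[e^{2sA}Q^{2\alpha}]\,ds$, which is finite by Hypothesis \ref{hyp0}\eqref{hyp0.3} (with $\gamma=0$, or a fortiori from the case $\gamma>0$). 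Path-continuity in $H_\alpha$ then follows from the factorization method exactly as in \cite[Theorem 5.9 and Theorem 5.11]{DA-ZA4}, now applied with $H_\alpha$ as the state space and $W_\alpha$ as the driving Wiener process; the condition \eqref{PD} with $\gamma>0$ is precisely what the factorization method needs. Once this is in hand, the remaining contraction-mapping argument is entirely parallel to the proof of Theorem \ref{smild} given in Appendix \ref{proofsmild}.
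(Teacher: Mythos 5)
Your overall strategy is exactly the paper's: the published proof consists of the single observation that, by Lemma \ref{acuqa}, the hypotheses of Theorem \ref{smild} are satisfied for \eqref{eqcameron} with $H_\alpha$ replacing $\X$, and your fixed-point argument in $H_\alpha^2([0,T])$ is just that theorem's proof transcribed onto $H_\alpha$. The treatment of the drift term (contractivity of $e^{tA}$ on $H_\alpha$, the Lipschitz bound and \eqref{jj} from Lemma \ref{acuqa}, the weighted-norm contraction, and the derivation of (a) and (b)) is fine and matches the paper.

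However, the step you correctly single out as the only nontrivial one --- square-integrability of the stochastic convolution in the $H_\alpha$-norm --- contains a computational error. Since $Q^{-\alpha}$, $e^{sA}$ and $Q^{\alpha}$ commute, your own formula gives
\begin{equation*}
\E\big[\norm{W_A(t)}_\alpha^2\big]=\int_0^t\sum_n\norm{Q^{-\alpha}e^{sA}Q^{\alpha}e_n}^2ds=\int_0^t\sum_n\norm{e^{sA}e_n}^2ds=\int_0^t\Tr[e^{2sA}]\,ds,
\end{equation*}
not $\int_0^t\Tr[e^{2sA}Q^{2\alpha}]\,ds$: the factor $Q^{-\alpha}$ coming from $\norm{\cdot}_\alpha$ cancels the factor $Q^{\alpha}$ in the noise, so no power of $Q$ survives inside the trace. (Equivalently: as an $H_\alpha$-valued process, $W_\alpha=Q^\alpha W$ has covariance $\Id_{H_\alpha}$ with respect to $\scal{\cdot}{\cdot}_\alpha$, so on $H_\alpha$ one is in the ``cylindrical noise'' situation, not the $Q^{2\alpha}$ one.) In terms of the eigenvalues $\lambda_n$ of $Q$, $\int_0^T\Tr[e^{2sA}]\,ds=\sum_n\lambda_n^{1-2\alpha}\big(1-e^{-T\lambda_n^{2\alpha-1}}\big)$, which behaves like $\Tr[Q^{1-2\alpha}]$ and is \emph{not} controlled by Hypothesis \ref{hyp0}\eqref{hyp0.3}: e.g.\ $\lambda_n=n^{-2}$, $\alpha=1/4$ gives $\sum_n n^{-1}=+\infty$ while \eqref{PD} holds for small $\gamma$. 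So the finiteness you need does not follow ``by Hypothesis \ref{hyp0}\eqref{hyp0.3}'' as claimed; one needs the stronger condition $\int_0^t s^{-\gamma}\Tr[e^{2sA}]\,ds<+\infty$ (i.e.\ \eqref{PD} read on $H_\alpha$, where the covariance of the noise is the identity). To be fair, the paper's one-line proof silently assumes the same thing --- Lemma \ref{acuqa} only checks the drift, and neither the text nor Proposition \ref{propHalpha}\eqref{MBH} verifies the trace condition on $H_\alpha$ --- so you have located the genuinely delicate point; you just have not resolved it, and your computation as written would lead you to believe, incorrectly, that it is covered by the stated hypotheses.
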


\begin{proof}
It is enough to observe that, by Lemma \ref{acuqa} the hypotheses of Theorem \ref{smild} for equation \eqref{eqcameron} are satified with $H_\alpha$ replacing $\X$.
\end{proof}

We are now ready to state and prove the main theorem of this subsection.

\begin{thm}\label{Hsmild}
If Hypotheses \ref{hyp1} hold true, then for every $x\in\X$ the stochastic partial differential equation \eqref{eqF} has a unique mild solution $X(t,x)$ belonging to $\X^2([0,T])$ and $\mathbb{P}$-a.s. path-continuous. 
Furthermore
\[X(t,x)=Z_x(t,0)+e^{tA}x,\]
where $Z_{x}(t,0)$ is the unique mild solution of \eqref{eqcameron} with $h=0$.
\end{thm}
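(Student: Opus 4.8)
The plan is to transfer the well-posedness of the auxiliary equation \eqref{eqcameron} (established in Proposition \ref{existenceZ}) to the original equation \eqref{eqF} via the substitution $X(t,x) = Z_x(t,0) + e^{tA}x$. First I would verify existence: set $X(t,x) := Z_x(t,0) + e^{tA}x$, where $Z_x(\cdot,0)$ is the mild solution of \eqref{eqcameron} with $h=0$, which exists and is $\mathbb{P}$-a.s.\ path-continuous by Proposition \ref{existenceZ}. Using the mild formulation of \eqref{eqcameron},
\[
Z_x(t,0) = \int_0^t e^{(t-s)A} F_{x,s}(Z_x(s,0))\, ds + \int_0^t e^{(t-s)A} Q^\alpha dW(s),
\]
and the definition $F_{x,s}(h) = F(h + e^{sA}x)$, I would substitute $h = Z_x(s,0)$ so that $F_{x,s}(Z_x(s,0)) = F(Z_x(s,0) + e^{sA}x) = F(X(s,x))$. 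Adding $e^{tA}x$ to both sides then gives exactly \eqref{mildF}, so $X(t,x)$ is a mild solution of \eqref{eqF}. Path-continuity on $(0,T]$ follows from that of $Z_x(\cdot,0)$ together with continuity of $t\mapsto e^{tA}x$; at $t=0$ one has $X(0,x) = Z_x(0,0) + x = x$, and continuity at $0$ follows from strong continuity of $e^{tA}$ and $Z_x(0,0)=0$. Membership in $\X^2([0,T])$ follows from $Z_x(\cdot,0)\in H^2_\alpha([0,T])$, the continuous embedding $H_\alpha\hookrightarrow\X$ (inequality \eqref{Halpha1}), and boundedness of $t\mapsto\|e^{tA}x\|$ by contractivity of $e^{tA}$ on $\X$.

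For uniqueness I would argue in the reverse direction. Suppose $\widetilde X(t,x)$ is any mild solution of \eqref{eqF} in $\X^2([0,T])$ with $\mathbb{P}$-a.s.\ path-continuous trajectories. For $\alpha\in[0,1/2)$, analyticity of $e^{tA}$ (with $A = -(1/2)Q^{2\alpha-1}$) forces $\widetilde X(t,x)\in H_\alpha$ for $t\in(0,T]$, while for $\alpha=1/2$ one has $H_{1/2}=Q^{1/2}(\X)$ but $A = -(1/2)\Id$ and the diffusion term $W_A(t)$ lives in $H_\alpha$; in either case I would set $\widetilde Z(t) := \widetilde X(t,x) - e^{tA}x$ and check, by subtracting $e^{tA}x = x + \int_0^t A e^{sA}x\,ds$ appropriately handled through the mild formulation, that $\widetilde Z$ is a mild solution of \eqref{eqcameron} with $h=0$. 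One must be slightly careful here because $Ae^{tA}x$ need not be integrable near $t=0$ when $\alpha=1/2$ is excluded — the cleanest route is to manipulate the integral equations directly rather than differentiate: write the mild form of \eqref{eqF} for $\widetilde X$, substitute $F(\widetilde X(s,x)) = F_{x,s}(\widetilde Z(s))$, and observe the result is precisely the mild form of \eqref{eqcameron}. Then Proposition \ref{existenceZ} gives $\widetilde Z(t) = Z_x(t,0)$ for all $t\in[0,T]$, hence $\widetilde X(t,x) = Z_x(t,0) + e^{tA}x = X(t,x)$.

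The main obstacle I anticipate is the $H_\alpha$-versus-$\X$ bookkeeping, in particular making sure the auxiliary equation \eqref{eqcameron} is genuinely posed and well-behaved on $H_\alpha$ while the original equation lives on $\X$: one needs the stochastic convolution $W_A(t) = \int_0^t e^{(t-s)A}Q^\alpha dW(s)$ to take values in $H_\alpha$ (via Proposition \ref{propHalpha}\eqref{MBH}, since $Q^\alpha W(t)$ is an $H_\alpha$-valued Wiener process and $e^{tA}$ is a contraction on $H_\alpha$ by Proposition \ref{propHalpha}\eqref{Halpha4}), and one needs the drift $F_{x,s}$ to be Lipschitz $H_\alpha\to H_\alpha$ with the integrability \eqref{jj}, which is exactly the content of Lemma \ref{acuqa}. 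Once these structural facts are in place the equivalence of the two mild equations is a routine substitution, and the theorem follows directly from Proposition \ref{existenceZ} and Theorem \ref{smild} applied on $H_\alpha$.
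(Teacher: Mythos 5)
Your proposal is correct and follows the paper's strategy almost exactly: the existence half (set $X(t,x):=Z_x(t,0)+e^{tA}x$, use $F_{x,s}(Z_x(s,0))=F(X(s,x))$ to turn the mild form of \eqref{eqcameron} into \eqref{mildF}, then check membership in $\X^2([0,T])$ and path-continuity via \eqref{Halpha1}, Proposition \ref{propHalpha} and Proposition \ref{existenceZ}) is precisely the paper's argument. The only divergence is in the uniqueness step. The paper takes two mild solutions $X$ and $Y$ of \eqref{eqF} in $\X^2([0,T])$, observes that $X(t,x)-e^{tA}x$ and $Y(t,x)-e^{tA}x$ are $H_\alpha$-valued (the drift integrand lies in $H_\alpha$ since $F(\X)\subseteq H_\alpha$, and the stochastic convolution is $H_\alpha$-valued by Proposition \ref{propHalpha}\eqref{MBH}), and then runs a Gronwall estimate directly on $t\mapsto\E\big[\norm{X(t,x)-Y(t,x)}_\alpha^2\big]$, transferring back to the $\X$-norm with \eqref{Halpha1}. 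You instead reduce to the uniqueness clause of Proposition \ref{existenceZ} by showing that $\widetilde{Z}(t):=\widetilde{X}(t,x)-e^{tA}x$ solves \eqref{eqcameron} with $h=0$; your instinct to manipulate the integral equations rather than differentiate (so that $Ae^{tA}x$ never appears) is the right one and matches the paper. The one point your reduction leaves implicit is that Proposition \ref{existenceZ} asserts uniqueness only within the class $H^2_\alpha([0,T])$, so besides the pointwise inclusion $\widetilde{Z}(t)\in H_\alpha$ you must also verify $\sup_{t\in[0,T]}\E\big[\norm{\widetilde{Z}(t)}_\alpha^2\big]<+\infty$, i.e.\ an a priori bound in the $H_\alpha$-norm for an arbitrary solution of \eqref{eqF}; this can be obtained from the integral inequality $\norm{\widetilde{Z}(t)}_\alpha\leq\int_0^t\big(\norm{F_{x,s}(0)}_\alpha+L_{F,\alpha}\norm{\widetilde{Z}(s)}_\alpha\big)ds+\norm{W_A(t)}_\alpha$ together with Lemma \ref{acuqa}. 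The paper faces the analogous issue when it asserts that $t\mapsto\E\big[\norm{X(t,x)-Y(t,x)}_\alpha^2\big]$ is bounded before invoking Gronwall, so this is a shared technicality rather than a defect of your route; making it explicit would complete your argument.
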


\begin{proof}
We start by proving the uniqueness statement. Let $X(t,x)$ and $Y(t,x)$ be two mild solutions of  \eqref{eqF} in $\X^2([0,T])$. Then
\[X(t,x)=e^{tA}x+\int_0^te^{(t-s)A}F\big(X(s,x)\big)ds+\int_0^te^{(t-s)A}Q^{\alpha}dW(s),\]
and 
\begin{align*}
X(t,x)-e^{tA}x &=\int_0^te^{(t-s)A}F\big(X(s,x)+e^{sA}x-e^{sA}x\big)ds+\int_0^te^{(t-s)A}Q^{\alpha}dW(s)\\
&=\int_0^te^{(t-s)A}F_{x,s}\big(X(s,x)-e^{sA}x\big)ds+\int_0^te^{(t-s)A}Q^{\alpha}dW(s).
\end{align*}
Since $e^{(t-s)A}F_{x,s}(X(s,x)-e^{sA}x)$ and $\int_0^te^{(t-s)A}Q^{\alpha}dW(s)$ belong to $H_\alpha$, 
we get that the process $X(t,x)-e^{tA}x$ has values in $H_\alpha$. In the same way $Y(t,x)-e^{tA}x$ has values in $H_\alpha$. So $\{X(t,x)-Y(t,x)\}_{t\in[0,T]}$ is a $H_\alpha$-valued process. Observe that by Proposition \ref{propHalpha}\eqref{Halpha4}, we get
\begin{align*}
\E\sq{\norm{X(t,x)-Y(t,x)}_\alpha^2} &\leq T\E\sq{\int_0^t \norm{e^{(t-s)A}\big(F(X(s,x))-F(Y(s,x))\big)}^2_\alpha ds} \\ 
&\leq T L_{F,\alpha}\int_0^T\E\sq{\norm{X(s,x)-Y(s,x)}_\alpha^2}ds.
\end{align*}
Since $t\mapsto \E\sq{\norm{X(t,x)-Y(t,x)}_\alpha^2}$ is bounded in $[0,T]$, by the Gronwall inequality (see \cite[p. 188]{HEN1}) we get $\E[\norm{X(t,x)-Y(t,x)}_\alpha^2]=0$ for every $t\in[0,T]$ and $x\in\X$. We stress that we cannot make the same arguments in $\X^2([0,T])$, since the $F$ is $H_\alpha$-Lipschitz and may not be Lipschitz continuous on $\X$. By \eqref{Halpha1}, for every $p\geq 2$, $t\in[0,T]$ and $x\in\X$ 
\begin{align*}
\norm{X(t,x)-Y(t,x)}^2_{\X^2([0,T])} &=\sup_{t\in[0,T]}\E\big[\norm{X(t,x)-Y(t,x)}^2\big]\\
&\leq \|Q^{\alpha}\|_{\mathcal{L}(H_\alpha)}\sup_{t\in[0,T]}\E\big[\|X(t,x)-Y(t,x)\|_\alpha^2\big]=0.
\end{align*}
This concludes the proof of the uniqueness in $\X^2([0,T])$.

Now we show the existence of the mild solution. We have already noted that, when Hypotheses \ref{hyp1} hold, by Proposition \ref{existenceZ} the stochastic partial differential equation
\begin{gather*}
\eqsys{
dZ_x(t,0)=\big(AZ_x(t,0)+F_{x,t}(Z_x(t,0))\big)dt+ Q^{\alpha}dW(t), & t\in(0,T];\\
Z_x(0,0)=0,
}
\end{gather*}
has a unique mild solution $Z_x(t,0)$ in $H_\alpha^2([0,T])$. We claim that the process $X(t,x)=Z_x(t,0)+e^{tA}x$ is the mild solution of \eqref{eqF}. Indeed
\begin{align*}
X(t,x)&=e^{tA}x+Z_x(t,0)=e^{tA}x+\int_0^te^{(t-s)A}F_{x,s}\big(Z_x(s,0)\big)ds+\int_0^te^{(t-s)A}Q^{\alpha}dW(s)\\
&=e^{tA}x+\int_0^te^{(t-s)A}F\big(X(s,x)\big)ds+\int_0^te^{(t-s)A}Q^{\alpha}dW(s).
\end{align*}
Now we check that $e^{tA}x+Z_x(t,0)$ belongs to $\X^2([0,T])$. Indeed, by the contractivity of $e^{tA}$, Proposition \ref{propHalpha}\eqref{Halpha3} and \eqref{Halpha1} we have
\begin{align*}
\|e^{tA}x+Z_x(t,0)\|^2_{\X^2([0,T])} & =\sup_{t\in[0,T]}\E\big[\|e^{tA}x+Z_x(t,0)\|^p\big]\\
&= 2\|x\|^p+2\|Q^{\alpha}\|_{\mathcal{L}(H_\alpha)}\|Z_x(t,0)\|_{H_\alpha^2([0,T])}
\end{align*}
Since $\{Z_x(t,0)\}_{t\geq 0}$ belongs to $H_\alpha^2([0,T])$ by Proposition \ref{existenceZ}, the claim follows. The property $\mathbb{P}(\int_0^T\|e^{tA}x+Z_x(t,0)\|^2ds<+\infty)=1$ is an easy consequence of the fact that $e^{tA}x+Z_x(t,0)$ belongs to $\X^2([0,T])$. Therefore $X(t,x)=Z_x(t,0)+e^{tA}x$ is a mild solution of \eqref{eqF}. Furthermore, by \eqref{Halpha1}, $X(t,x)$ is $\mathbb{P}$-a.s. path-continuous. 
\end{proof}

\begin{rmk}\label{deriva}
In Theorem \ref{Hsmild} we have shown that for every $x\in\X$ and $t\in(0,T]$ the process $X(t,x)=Z_x(t,0)+e^{tA}x$ is the unique mild solution of \eqref{eqF} in $\X^2([0,T])$. So, for every $x\in\X$, $t\in(0,T]$ and $h\in H_\alpha$ the process $Z_{x+h}(t,0)+e^{tA}(x+h)$ is the unique mild solution of the stochastic partial differential equation 
\begin{gather}\label{eqxh}
\eqsys{
dX(t,x+h)=\big(AX(t,x+h)+F(X(t,x+h))\big)dt+ Q^{\alpha}dW(t), & t\in(0,T];\\
X(0,x+h)=x+h,
}
\end{gather}
that belongs to $\X^2([0,T])$. However in some cases it is more useful to represent the mild solution of \eqref{eqxh} by another process, as it will became apparent in the next subsection. For any $h\in H_{\alpha}$ and $x\in\X$, the process $Z_x(t,h)+e^{tA}x$ is the mild solution of \eqref{eqxh}. Indeed
\begin{align*}
Z_x(t,h)+e^{tA}x&=e^{tA}x+e^{tA}h+\int_0^te^{(t-s)A}F_{x,s}\big(Z_x(s,h)\big)ds+\int_0^te^{(t-s)A}Q^{\alpha}dW(s)\\
&=e^{tA}(x+h)+\int_0^te^{(t-s)A}F\big(Z_x(s,h)+e^{sA}x\big)ds+\int_0^te^{(t-s)A}Q^{\alpha}dW(s).
\end{align*}
In the same way, as in the proof of Theorem \ref{Hsmild}, it is possible to prove that $Z_x(t,h)+e^{tA}x$ belongs to $\X^2([0,T])$. So $X(t,x+h)=Z_x(t,h)+e^{tA}x$ almost surely with respect to $\mathbb{P}$.
\end{rmk}

\subsection{Space regularity}\label{SpacialRegularity}

In this subsection we will show that the mild solution of \eqref{eqF} constructed in Section \ref{ExAndUni} is Gateaux differentiable along $H_\alpha$. Now we clarify what we mean by ``differentiable along $H_\alpha$''.

\begin{defi}\label{derivataaaaa}
Let $Y$ be a Hilbert space endowed with the norm $\norm{\cdot}_Y$ and let $\Phi: \X\rightarrow Y$. 
\begin{enumerate}[\rm(i)]
\item We say that $\Phi$ is differentiable along $H_\alpha$ at the point $x\in\X$, if there exists $L\in\mathcal{L}(H_\alpha,Y)$ such that 
\begin{align*}
\lim_{\norm{h}_\alpha\ra 0}\frac{\norm{\Phi(x+h)-\Phi(x)-Lh}_Y}{\norm{h}_\alpha}=0.
\end{align*}
When it exists, the operator $L$ is unique and we set $\J_\alpha\Phi(x):=L$. If $Y=\R$, then $L\in H_\alpha^*$ and so there exists $k\in H_\alpha$ such that $Lh=\gen{h,k}_\alpha$ for any $h\in H_\alpha$. We set $\D_\alpha\Phi(x):=k$ and we call it $H_\alpha$-gradient of $\Phi$ at $x\in\X$. 

\item We say that $\Phi$ is two times differentiable along $H_\alpha$ at the point $x\in \X$ if it is differentiable along $H_\alpha$ at every point of $\X$ and there exists $T\in\mathcal{L}(H_\alpha,\mathcal{L}(H_\alpha,Y))$ such that
\begin{align*}
\lim_{\norm{k}_\alpha\ra 0}\frac{\norm{(\J_\alpha\Phi(x+k))h-(\J_\alpha\Phi(x))h-(Th)k}_{Y}}{\norm{k}_\alpha}=0.
\end{align*}
uniformly for $h\in H_\alpha$ with norm $1$. When it exists, the operator $T$ is unique and we set $\J^2_\alpha\Phi(x):=T$. If $Y=\R$, then $T\in\mathcal{L}(H_\alpha,H_\alpha^*)$, so there exists $S\in\mathcal{L}(H_\alpha)$ such that $(Th)(k)=\gen{Sh,k}_\alpha$, for any $h,k\in H_\alpha$. We set $\D^2_\alpha\Phi(x):=S$ and we call it $H_\alpha$-Hessian of $\Phi$ at $x\in\X$.

\item We say that $\Phi$ is Gateaux differentiable along $H_\alpha$ at the point $x\in\X$
if there exists $L\in\mathcal{L}(H_\alpha,Y)$ such that 
\begin{align*}
Lh=Y\text{-}\lim_{t\ra 0}\frac{\Phi(x+th)-\Phi(x)}{t}.
\end{align*}
When it exists, the operator $L$ is unique and we set $\J^G_\alpha\Phi(x):=L$.
\end{enumerate}
For simplicity sake we will write $\J_\alpha\Phi(x)h:=\J_\alpha\Phi(x)(h)$ and $\J^G_\alpha\Phi(x)h:=\J^G_\alpha\Phi(x)(h)$. For $k=1,2$, we denote by $C^k_{b,H_\alpha}(\X;Y)$ the set of the $k$-times differentiable functions along $H_\alpha$ such that the operator $\J_\alpha\Phi$, if $k=1$, and the operators $\J_\alpha\Phi$ and $\J^2_\alpha\Phi$, if $k=2$, are bounded. If $Y=\R$ we will simply write $C^k_{b,H_\alpha}(\X)$.
\end{defi}
\noindent We remark that if $\Phi:\X\ra Y$ is differentiable along $H_\alpha$ at $x\in\X$, then it is Gateaux differentiable along $H_\alpha$ at $x\in\X$ and it holds $\J_\alpha\Phi(x)=\J^G_\alpha\Phi(x)$. The derivative operators defined in Definition \ref{derivataaaaa}, are related to the ones presented in \cite{MAS2} and \cite{MAS1}. We will do a detailed comparison in Section \ref{Cmasi}.

We now prove some basic consequences of the above definition.

\begin{prop}\label{dalpha}
Let $Y$ be a Hilbert space endowed with the norm $\norm{\cdot}_Y$ and let $\Phi: \X\rightarrow Y$. If $\Phi$ is Fr\'echet differentiable at $x\in\X$, then it is differentiable along $H_\alpha$ and, for every $h\in H_\alpha$,
\begin{align}\label{J}
\J_\alpha\Phi(x)h=\J\Phi(x)h.
\end{align}
Furthermore if $\varphi:\X\ra\R$ is Fr\'echet differentiable at $x\in\X$, then we have
\begin{align*}
\D_\alpha\varphi(x)=Q^{2\alpha}\D\varphi(x).
\end{align*}
\end{prop}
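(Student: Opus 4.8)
The plan is to reduce everything to the Fréchet-differentiable case and then compute. First I would observe that if $\Phi:\X\ra Y$ is Fréchet differentiable at $x$, then by definition there is a bounded linear operator $\J\Phi(x):\X\ra Y$ with $\|\Phi(x+v)-\Phi(x)-\J\Phi(x)v\|_Y = o(\|v\|)$ as $\|v\|\ra 0$. I want to restrict this to increments $h\in H_\alpha$. The candidate operator for differentiability along $H_\alpha$ is $L := \J\Phi(x)|_{H_\alpha}$, the restriction of $\J\Phi(x)$ to $H_\alpha$. I must check two things: (a) $L\in\mathcal L(H_\alpha,Y)$, i.e. $L$ is bounded for the $\|\cdot\|_\alpha$-norm; and (b) the limit in Definition \ref{derivataaaaa}(i) holds.

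For (a), the continuous embedding $H_\alpha\hookrightarrow\X$ from Proposition \ref{propHalpha}, specifically inequality \eqref{Halpha1}, gives $\|h\|\leq \|Q^\alpha\|_{\mathcal L(H_\alpha)}\|h\|_\alpha$ for $h\in H_\alpha$, so $\|Lh\|_Y = \|\J\Phi(x)h\|_Y \leq \|\J\Phi(x)\|_{\mathcal L(\X,Y)}\|h\| \leq \|\J\Phi(x)\|_{\mathcal L(\X,Y)}\|Q^\alpha\|_{\mathcal L(H_\alpha)}\|h\|_\alpha$, which proves boundedness. For (b), for $h\in H_\alpha\setminus\{0\}$ write
\[
\frac{\|\Phi(x+h)-\Phi(x)-Lh\|_Y}{\|h\|_\alpha} = \frac{\|\Phi(x+h)-\Phi(x)-\J\Phi(x)h\|_Y}{\|h\|}\cdot\frac{\|h\|}{\|h\|_\alpha}.
\]
By \eqref{Halpha1} the second factor is bounded by $\|Q^\alpha\|_{\mathcal L(H_\alpha)}$, and as $\|h\|_\alpha\ra 0$ we also have $\|h\|\ra 0$ (again by \eqref{Halpha1}), so Fréchet differentiability makes the first factor tend to $0$. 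Hence the product tends to $0$, establishing that $\Phi$ is differentiable along $H_\alpha$ at $x$ with $\J_\alpha\Phi(x) = L = \J\Phi(x)|_{H_\alpha}$; in particular \eqref{J} holds.

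For the gradient identity, take $Y=\R$ and $\varphi$ Fréchet differentiable at $x$. By definition $\D\varphi(x)\in\X$ is the Riesz representative of $\J\varphi(x)$ in $\X$, i.e. $\J\varphi(x)v = \scal{v}{\D\varphi(x)}$ for all $v\in\X$. On the other hand $\D_\alpha\varphi(x)\in H_\alpha$ is the Riesz representative of $\J_\alpha\varphi(x)$ in $(H_\alpha,\scal{\cdot}{\cdot}_\alpha)$, so for every $h\in H_\alpha$,
\[
\scal{h}{\D_\alpha\varphi(x)}_\alpha = \J_\alpha\varphi(x)h = \J\varphi(x)h = \scal{h}{\D\varphi(x)}.
\]
Now I unfold the left-hand inner product using its definition \eqref{proscal}: $\scal{h}{\D_\alpha\varphi(x)}_\alpha = \scal{Q^{-\alpha}h}{Q^{-\alpha}\D_\alpha\varphi(x)}$. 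Writing $h=Q^\alpha g$ with $g\in\X$ (possible since $h\in H_\alpha=Q^\alpha(\X)$), the identity becomes $\scal{g}{Q^{-\alpha}\D_\alpha\varphi(x)} = \scal{Q^\alpha g}{\D\varphi(x)} = \scal{g}{Q^\alpha\D\varphi(x)}$ for all $g\in\X$ (using self-adjointness of $Q^\alpha$). Hence $Q^{-\alpha}\D_\alpha\varphi(x) = Q^\alpha\D\varphi(x)$, i.e. $\D_\alpha\varphi(x) = Q^{2\alpha}\D\varphi(x)$.

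I do not anticipate a serious obstacle here; the only point requiring a little care is making sure $\D_\alpha\varphi(x)$ indeed lies in $H_\alpha$ so that $Q^{-\alpha}$ may be applied to it — but this is exactly part of the definition of the $H_\alpha$-gradient, so it is automatic. The one genuinely substantive ingredient is the continuous embedding estimate \eqref{Halpha1}, which is what links the two norms and lets Fréchet differentiability pass to $H_\alpha$-differentiability.
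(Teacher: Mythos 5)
Your proposal is correct and follows essentially the same route as the paper: it uses the embedding inequality \eqref{Halpha1} both to bound the restricted derivative on $(H_\alpha,\norm{\cdot}_\alpha)$ and to pass the Fr\'echet limit to the $H_\alpha$-limit, and then identifies the two gradients by unfolding the inner product \eqref{proscal} and using the self-adjointness of $Q^{\alpha}$. The only cosmetic difference is that you substitute $h=Q^{\alpha}g$ to isolate $Q^{-\alpha}\D_\alpha\varphi(x)$, whereas the paper rewrites $\scal{\D\varphi(x)}{h}$ directly as $\scal{Q^{2\alpha}\D\varphi(x)}{h}_\alpha$; these are equivalent.
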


\begin{proof}
By the Fr\'echet differentiability of $\Phi$ we know that for every $x\in\X$ 
\[\lim_{\norm{h}\rightarrow 0}\frac{\norm{\Phi(x+h)-\Phi(x)-\J\Phi(x)h}_Y}{\norm{h}}=0.\]
By \eqref{Halpha1} we have that $\norm{h}\ra 0$, whenever $\norm{h}_\alpha\ra 0$ and
\begin{align*}
0\leq &\lim_{\norm{h}_\alpha\rightarrow 0}\frac{\norm{\Phi(x+h)-\Phi(x)-\J\Phi(x)h}_Y}{\norm{h}_\alpha}\\
=& \lim_{\norm{h}_\alpha\rightarrow 0}\frac{\norm{\Phi(x+h)-\Phi(x)-\J\Phi(x)h}_Y}{\norm{h}}\frac{\norm{h}}{\norm{h}_\alpha}\\
\leq & \|Q^\alpha\|_{\mathcal{L}(H_\alpha)}\lim_{\norm{h}_\alpha\rightarrow 0}\frac{\norm{\Phi(x+h)-\Phi(x)-\J\Phi(x)h}_Y}{\norm{h}_\alpha}=0
\end{align*}
We stress that $\J\Phi(x)$ belongs to $\mathcal{L}(H_\alpha,Y)$. So we get that $\Phi$ is differentiable along $H_\alpha$ at $x$ and \eqref{J} holds. 
Moreover, for every $x\in\X$ and $h\in H_\alpha$, we have
\begin{align*}
\scal{\D_\alpha\varphi(x)}{h}_\alpha &= \J_\alpha\varphi(x)h=\J\varphi(x)h=\scal{\D\varphi(x)}{h}\\
&= \langle Q^{\alpha}\D\varphi(x), Q^{\alpha}h\rangle_\alpha=\langle Q^{2\alpha}\D\varphi(x), h\rangle_\alpha,
\end{align*}
hence $D_\alpha\varphi(x)=Q^{2\alpha}\D\varphi(x)$.
\end{proof}

\begin{lemm}\label{Falpha}
Assume that Hypotheses \ref{hyp1} hold true and let $k=1,2$. If $F$ belongs to $C^k_{b,H_\alpha}(\X;H_\alpha)$ then 
for any $x\in\X$ and $t\in(0,T]$ the function $F_{x,t}: H_{\alpha}\rightarrow H_{\alpha}$ defined as
\[F_{x,t}(h):=F(h+e^{tA}x),\qquad h\in H_{\alpha},\]
belongs $C^k_b(H_\alpha;H_\alpha)$. Furthermore $\|D_\alpha F_{x,t}\|_{\mathcal{L}(H_\alpha)}\leq L_{F,\alpha}$, where $L_{F,\alpha}$ is the $H_\alpha$-Lipschitz constant of $F$.
\end{lemm}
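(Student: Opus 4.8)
The plan is to transfer the differentiability and boundedness properties of $F$ along $H_\alpha$ to the shifted function $F_{x,t}$ by exploiting that $F_{x,t}$ is the composition of $F$ with the affine map $h \mapsto h + e^{tA}x$ from $H_\alpha$ into $\X$. First I would observe that $e^{tA}x \in H_\alpha$ for every $t \in (0,T]$ and $x \in \X$: for $\alpha \in [0,1/4)$ this follows from analyticity of $e^{tA}$ as in Lemma \ref{acuqa}, while for $\alpha \in [1/4,1/2)$ one argues similarly (and $\alpha = 1/2$ is trivial since $H_{1/2} = \X$ is not the relevant case here — actually $H_{1/2}=Q^{1/2}(\X)$, but $e^{tA}=e^{-t/2}\Id$ so $e^{tA}x = e^{-t/2}x$ need not lie in $H_{1/2}$; I should be careful and note that the statement as phrased presumes we are in a regime where this makes sense, or rely on Hypothesis \ref{hyp1}\eqref{opl}). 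In any case, once $e^{tA}x \in H_\alpha$, the map $\tau_{x,t}\colon H_\alpha \to H_\alpha$, $\tau_{x,t}(h) = h + e^{tA}x$, is a translation on $H_\alpha$, hence smooth (affine) with $\J_\alpha \tau_{x,t} = \Id_{H_\alpha}$ and vanishing higher derivatives.

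Next I would apply the chain rule: $F_{x,t} = F \circ \iota \circ \tau_{x,t}$ where $\iota\colon H_\alpha \hookrightarrow \X$ is the continuous embedding, which is linear and bounded, hence its own derivative. Since $F \in C^k_{b,H_\alpha}(\X;H_\alpha)$ by hypothesis, i.e. $F$ is $k$-times differentiable along $H_\alpha$ with bounded $\J_\alpha F$ and (if $k=2$) bounded $\J^2_\alpha F$, the composition with the affine map $\tau_{x,t}$ yields that $F_{x,t}\colon H_\alpha \to H_\alpha$ is $k$-times Fr\'echet differentiable with
\[
\J F_{x,t}(h) = \J_\alpha F(h + e^{tA}x), \qquad \J^2 F_{x,t}(h) = \J^2_\alpha F(h+e^{tA}x),
\]
(reading these through the definitions in Definition \ref{derivataaaaa}), and these are bounded uniformly in $h$ by the corresponding bounds on $\J_\alpha F$ and $\J^2_\alpha F$. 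Hence $F_{x,t} \in C^k_b(H_\alpha; H_\alpha)$.

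For the final bound, I would note that for $f \in C^1_{b,H_\alpha}(\X;H_\alpha)$ the $H_\alpha$-Lipschitz constant $L_{F,\alpha}$ controls $\|\J_\alpha F(y)\|_{\mathcal{L}(H_\alpha)}$ for every $y$: indeed, for any $y \in \X$ and $h \in H_\alpha$, writing the directional difference quotient and using $H_\alpha$-Lipschitzianity of $F$ gives $\|\J_\alpha F(y) h\|_\alpha = \lim_{t\to 0} \|F(y+th) - F(y)\|_\alpha / |t| \le L_{F,\alpha}\|h\|_\alpha$, so $\|\J_\alpha F(y)\|_{\mathcal{L}(H_\alpha)} \le L_{F,\alpha}$. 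Combining with $\J F_{x,t}(h) = \J_\alpha F(h + e^{tA}x)$ and noting $\D_\alpha F_{x,t}$ (in the notation of the lemma, meaning $\J F_{x,t}$ as an operator on $H_\alpha$) therefore satisfies $\|\D_\alpha F_{x,t}\|_{\mathcal{L}(H_\alpha)} \le L_{F,\alpha}$ gives the claim.

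The main obstacle I anticipate is purely bookkeeping: carefully matching the abstract derivative operators $\J_\alpha$, $\J^2_\alpha$ from Definition \ref{derivataaaaa} (which are derivatives along $H_\alpha$ of a function defined on all of $\X$) with the genuine Fr\'echet derivatives on $H_\alpha$ of the restriction $F|_{H_\alpha + e^{tA}x}$, and verifying that the composition/chain-rule identities hold at the level of these operators. The subtle point is that $F$ need not be Fr\'echet differentiable on $\X$, only differentiable along $H_\alpha$, so one cannot naively invoke the classical chain rule; instead one must check directly from the limit definitions that the difference quotients of $F_{x,t}$ on $H_\alpha$ converge, using that $\tau_{x,t}$ maps $H_\alpha$-neighbourhoods to $H_\alpha$-neighbourhoods isometrically. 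Everything else — the membership $e^{tA}x \in H_\alpha$ and the uniform bound via $L_{F,\alpha}$ — is routine given Lemma \ref{acuqa} and Proposition \ref{propHalpha}.
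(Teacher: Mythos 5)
Your core identity $\J F_{x,t}(h_0)=\J_\alpha F(e^{tA}x+h_0)$ and your argument for the bound $\norm{\J_\alpha F(y)}_{\mathcal{L}(H_\alpha)}\leq L_{F,\alpha}$ via difference quotients are exactly right and coincide with what the paper does. However, your proof as organized has a gap precisely where you suspect one: you set things up so that you \emph{need} $e^{tA}x\in H_\alpha$ (in order for $\tau_{x,t}$ to be a translation \emph{on} $H_\alpha$ and for the factorization $F_{x,t}=F\circ\iota\circ\tau_{x,t}$ to make sense), and for $\alpha=1/2$ this fails, since $e^{tA}=e^{-t/2}\Id$ and $e^{-t/2}x\notin Q^{1/2}(\X)$ for general $x\in\X$. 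Hypothesis \ref{hyp1}\eqref{opl} (local boundedness of $F$) cannot rescue this; it says nothing about differentiability.

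The repair is to delete the detour entirely: the membership $e^{tA}x\in H_\alpha$ is never needed. By Definition \ref{derivataaaaa}, $F\in C^k_{b,H_\alpha}(\X;H_\alpha)$ means $F$ is differentiable along $H_\alpha$ at \emph{every} point $y\in\X$, with increments taken in $H_\alpha$; that is,
\[
\lim_{\norm{h}_\alpha\ra 0}\frac{\norm{F(y+h)-F(y)-\J_\alpha F(y)h}_{\alpha}}{\norm{h}_{\alpha}}=0
\qquad\text{for all }y\in\X .
\]
Taking $y=e^{tA}x+h_0$, which is a point of $\X$ whether or not $e^{tA}x$ lies in $H_\alpha$, this limit \emph{is} the statement that $F_{x,t}:H_\alpha\ra H_\alpha$ is Fr\'echet differentiable at $h_0$ with derivative $\J_\alpha F(e^{tA}x+h_0)\in\mathcal{L}(H_\alpha)$ (and similarly for the second derivative when $k=2$). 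No chain rule, no translation operator on $H_\alpha$, and no case distinction in $\alpha$ is required; this is exactly the paper's one-line argument. With that substitution made, your concluding estimate $\norm{\J F_{x,t}(h_0)}_{\mathcal{L}(H_\alpha)}=\norm{\J_\alpha F(e^{tA}x+h_0)}_{\mathcal{L}(H_\alpha)}\leq L_{F,\alpha}$ stands as written.
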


\begin{proof}
We only prove the statement in the case $k=1$, since the proof for $k=2$ is similar. By the definition of the space $C^1_{b,H_\alpha}(\X;H_\alpha)$ for every $y\in\X$
\begin{align}\label{con}
\lim_{\norm{h}_\alpha\ra 0}\frac{\norm{F(y+h)-F(y)-\J_\alpha F(y)h}_{\alpha}}{\norm{h}_{\alpha}}=0.
\end{align}
Now letting $y=e^{tA}x+h_0$ in \eqref{con} we get
\begin{align*}
\lim_{\norm{h}_\alpha\ra 0}\frac{\norm{F(e^{tA}x+h_0+h)-F(e^{tA}x+h_0)-\J_\alpha F(e^{tA}x+h_0)h}_{\alpha}}{\norm{h}_{\alpha}}=0.
\end{align*}
So $\J F_{x,t}(h_0)=\J_\alpha F(e^{tA}x+h_0)$. The furthermore part is an standard consequence of the identity we just showed.
\end{proof}

We are now ready to study the regularity of the mild solution of \eqref{eqF}.

\begin{thm}\label{deZ}
Assume that Hypotheses \ref{hyp1} hold true and let $F\in C^1_{b,H_{\alpha}}(\X;H_{\alpha})$. For every $x\in\X$ and $h\in H_\alpha$ let $Z_x(t,h)$ be the mild solution of \eqref{eqcameron}. The problem 
\begin{gather}\label{eqdeZ}
\eqsys{
dY(t,h_0)=\big(AY(t,h_0)+\J F_{x,t}\big(Z_x(t,h)\big)Y(t,h_0)\big)dt, & t\in(0,T];\\
Y(0,h_0)=h_0\in H_\alpha,
}
\end{gather}
admits a unique mild solution $Y(\cdot,h_0)$ in $H_{\alpha}^2([0,T])$. Furthermore for every $t\in [0,T]$,
\begin{equation}\label{stigro}
\norm{Y(t,h_0)}_{\alpha}\leq e^{TL_{F,\alpha}}\norm{h_0}_{\alpha}.
\end{equation}
Finally the map $h\mapsto Z_x(\cdot,h)$ is Gateaux differentiable with values in $H^2_\alpha([0,T])$ and for any $t\in[0,T]$ and $h_0\in H_\alpha$ it holds $Y(t,h_0)=\J^G Z_x(t,h)h_0$.
\end{thm}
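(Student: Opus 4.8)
The plan is to apply Theorems \ref{smild} and \ref{desmild}, read with $H_\alpha$ in place of $\X$, to the auxiliary equation \eqref{eqcameron}, treating the initial datum $h\in H_\alpha$ as the variable with respect to which one differentiates. This is legitimate: by Proposition \ref{propHalpha}\eqref{Halpha4}, $e^{tA}$ restricts to a strongly continuous analytic contraction semigroup on $H_\alpha$; by Lemma \ref{Falpha}, applied with $k=1$ (possible since $F\in C^1_{b,H_\alpha}(\X;H_\alpha)$), for every $x\in\X$ and $t\in(0,T]$ the map $F_{x,t}:H_\alpha\to H_\alpha$ lies in $C^1_b(H_\alpha;H_\alpha)$ --- hence is Gateaux differentiable with $\J^G F_{x,t}=\J F_{x,t}$ --- and satisfies $\|\J F_{x,t}(h_0)\|_{\mathcal{L}(H_\alpha)}\le L_{F,\alpha}$ for every $h_0\in H_\alpha$; and the $L^2$-summability condition \eqref{inty} for \eqref{eqcameron} in $H_\alpha$ is precisely the content of Lemma \ref{acuqa}.

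The first step is to invoke Theorem \ref{desmild} in $H_\alpha$ for equation \eqref{eqcameron}: the hypotheses of Theorem \ref{smild} hold for it by Proposition \ref{existenceZ}, and the additional Gateaux-differentiability hypothesis, with uniform bound $L_{F,\alpha}$ on the directional derivatives, is supplied by Lemma \ref{Falpha}. This gives that $h\mapsto Z_x(\cdot,h)$ is Gateaux differentiable from $H_\alpha$ to $H^2_\alpha([0,T])$ with bounded and continuous directional derivatives, and that for all $h,h_0\in H_\alpha$ the process $\J^G Z_x(\cdot,h)h_0$ is the unique mild solution in $H^2_\alpha([0,T])$ of
\[dY(t,h_0)=\big(AY(t,h_0)+\big(\J^G F_{x,t}(Z_x(t,h))\big)Y(t,h_0)\big)dt,\qquad Y(0,h_0)=h_0.\]
Since $\J^G F_{x,t}=\J F_{x,t}$, this equation coincides with \eqref{eqdeZ}; hence, putting $Y(\cdot,h_0):=\J^G Z_x(\cdot,h)h_0$ proves simultaneously the existence and uniqueness statement, the $\mathbb{P}$-a.s. continuity of $Y(\cdot,h_0)$ on $[0,T]$, and the final identity $Y(t,h_0)=\J^G Z_x(t,h)h_0$.

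It then remains to prove \eqref{stigro}. From the mild form of \eqref{eqdeZ} one has, for every $t\in[0,T]$ and $\mathbb{P}$-a.s.,
\[Y(t,h_0)=e^{tA}h_0+\int_0^t e^{(t-s)A}\big(\J F_{x,s}(Z_x(s,h))\big)Y(s,h_0)\,ds;\]
taking the $H_\alpha$-norm and using the contractivity of $e^{tA}$ on $H_\alpha$ (Proposition \ref{propHalpha}\eqref{Halpha4}) together with $\|\J F_{x,s}(Z_x(s,h))\|_{\mathcal{L}(H_\alpha)}\le L_{F,\alpha}$ from Lemma \ref{Falpha}, one gets
\[\norm{Y(t,h_0)}_\alpha\le\norm{h_0}_\alpha+L_{F,\alpha}\int_0^t\norm{Y(s,h_0)}_\alpha\,ds,\qquad t\in[0,T].\]
Since $Y(\cdot,h_0)$ is $\mathbb{P}$-a.s. continuous, $s\mapsto\norm{Y(s,h_0)}_\alpha$ is bounded on $[0,T]$, so Gronwall's inequality (see \cite[p.~188]{HEN1}) yields $\norm{Y(t,h_0)}_\alpha\le e^{L_{F,\alpha}t}\norm{h_0}_\alpha\le e^{TL_{F,\alpha}}\norm{h_0}_\alpha$ for every $t\in[0,T]$, which is \eqref{stigro}.

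The only genuinely delicate point is the first one: making sure that the structural framework underpinning Theorems \ref{smild} and \ref{desmild} --- analyticity and contractivity of $e^{tA}$, the measurability scheme built on $\mathcal{G}_T$ and progressive measurability, and the $C^1_b$ and $L^2$-summability conditions on the nonlinearity --- is preserved when $\X$ is replaced by $H_\alpha$. This is exactly what Proposition \ref{propHalpha} and Lemmas \ref{acuqa} and \ref{Falpha} are there to provide; once it is granted, the statement is an immediate consequence of Theorems \ref{smild}--\ref{desmild} and the elementary Gronwall estimate above.
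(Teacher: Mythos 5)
Your proof is correct and follows essentially the same route as the paper: the hypotheses of Theorems \ref{smild}--\ref{desmild} are verified in $H_\alpha$ via Proposition \ref{propHalpha}, Lemma \ref{acuqa} and Lemma \ref{Falpha}, and the bound \eqref{stigro} comes from the identical pathwise Gronwall argument on the mild form. The only difference is cosmetic: the paper additionally runs an explicit contraction-mapping construction of the solution of \eqref{eqdeZ} in $H^2_\alpha([0,T])$ before invoking the $H_\alpha$-version of Theorem \ref{desmild} to identify $Y(t,h_0)$ with $\J^G Z_x(t,h)h_0$, whereas you obtain existence, uniqueness and the identification in one stroke from that citation.
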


\begin{proof}
Consider the linear operator $V$ defined on $H_{\alpha}^2([0,T])$ as
\[V(Y)(t):=e^{tA}h_0+\int_0^te^{(t-s)A}\J F_{x,t}\big(Z_x(s,h)\big)Y(s)ds,\qquad t\in [0,T].\]
We want to apply the contraction mapping theorem to $V$, since a fixed point of $V$ is a mild solution of \eqref{eqdeZ}. First we check that $V(H_{\alpha}^2([0,T]))\subseteq H_{\alpha}^2([0,T])$. If $Y\in H^2_\alpha([0,T])$, then by Proposition \ref{propHalpha}\eqref{Halpha4} and Lemma \ref{Falpha} we have by standard computations
\begin{align}
\norm{V(Y)}^2_{H^2_\alpha([0,T])}
&\leq 2\norm{h_0}_\alpha^2+2T^2L_{F,\alpha}^2\norm{Y}^2_{H^2_\alpha([0,T])}<+\infty.\label{cuffie}
\end{align}
Now we show that $V$ is Lipschitz continuous on $H_\alpha^2([0,T])$. Let $Y_1,Y_2\in H_{\alpha}^2([0,T])$, then by standard calculations we get
\begin{align*}
\norm{V(Y_1)-V(Y_2)}^2_{H_{\alpha}^2[0,T]}
\leq T\sup_{t\in [0,T]}\left(\mathbb{E}\left[\left(\int_0^t\norm{e^{(t-s)A}\J F_{x,s}(Z_x(s,h))(Y_1(s)-Y_2(s))}_{\alpha}ds\right)^2\right]\right)
\end{align*}
Using the same arguments as in \eqref{cuffie} we obtain
\begin{align*}
\norm{V(Y_1)-V(Y_2)}^2_{H_{\alpha}^2([0,T])}\leq T^2L^2_{F,\alpha}\norm{Y_1-Y_2}_{H^2_{\alpha}([0,T])}^2.
\end{align*}
So there exists $T^*>0$ such that $V$ is a contraction on $H^2_\alpha([0,T^*])$. 
\[Y(t):=\eqsys{Y_r(y), & t\in [rT^*,(r+1)T^*],\ r=0,\ldots,n,\\
Y_n(t), & t\in [nT^*,T];}\]
By a classical arguments we have that $Y$ is the unique mild solution of \eqref{eqdeZ} in $H^2_\alpha([0,T])$.

To prove \eqref{stigro} we start by observing that by Proposition \ref{propHalpha}\eqref{Halpha4} and Lemma \ref{Falpha}
\begin{align*}
\norm{Y(t,h_0)}_{\alpha}
&\leq \norm{h_0}_{\alpha}+L_{F,\alpha}\int_0^t\norm{Y(s,h_0)}_{\alpha}ds.
\end{align*}
Recalling that the functions $Y(\cdot,h_0)$, $Z_x(\cdot,h)$ and $\J F_{x,t}$ are continuous, the Gronwall inequality yields \eqref{stigro}.

By Proposition \ref{existenceZ} and Lemma \ref{Falpha}, if $F\in C^1_{b,H_{\alpha}}(\X;H_{\alpha})$, then for each $T>0$, the map $h\mapsto Z_x(\cdot,h)$ from $H_\alpha$ to $H_{\alpha}^2([0,T])$ is Gateaux differentiable with bounded and continuous directional derivatives, for every $x\in\X$. Moreover, for any $h_0\in H_\alpha$, the process $Y(t,h_0):=\J^G Z_x(t,h)h_0$ is the unique mild solution in $H_\alpha([0,T])$ of \eqref{eqdeZ}. 
\end{proof}

Now we want to study the process $\J^G_{\alpha} X(t,x)h_0$ with $x\in\X$ and $h_0\in H_{\alpha}$.

\begin{thm}\label{deX}
Assume that Hypotheses \ref{hyp1} hold true, and let $F\in C^1_{b,H_{\alpha}}(\X;H_{\alpha})$. The map $x\mapsto X(\cdot,x)$ from $\X$ to $\X^2([0,T])$ is Gateaux differentiable along $H_\alpha$ and for $x\in\X$, $t\in[0,T]$ and $h\in H_{\alpha}$ it holds
\begin{gather}\label{DX=DZ}
\J^G_{\alpha}X(t,x)h=\J^G Z_x(t,0)h.
\end{gather}
\end{thm}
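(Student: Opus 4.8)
The plan is to reduce the statement to the already-established Gateaux differentiability of $h\mapsto Z_x(\cdot,h)$ proved in Theorem \ref{deZ}, using the representation $X(t,x+h)=Z_x(t,h)+e^{tA}x$ from Remark \ref{deriva}. First I would fix $x\in\X$ and $h\in H_\alpha$, and for $t\ge 0$ write, for every direction $h_0\in H_\alpha$ and every scalar $s\neq 0$,
\[
\frac{X(t,x+sh_0)-X(t,x)}{s}=\frac{Z_x(t,sh_0)+e^{tA}x-\big(Z_x(t,0)+e^{tA}x\big)}{s}=\frac{Z_x(t,sh_0)-Z_x(t,0)}{s},
\]
where the equalities hold $\mathbb P$-a.s.\ (for fixed $t$) by Remark \ref{deriva}, since $Z_x(\cdot,h_0')+e^{tA}x$ is the mild solution of \eqref{eqxh} with increment $h_0'\in H_\alpha$. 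Passing to the limit $s\to 0$ in $\X^2([0,T])$ (equivalently, in the $H_\alpha^2([0,T])$-topology, which is stronger by \eqref{Halpha1}), Theorem \ref{deZ} gives that the right-hand side converges to $\J^G Z_x(t,0)h_0$; hence the left-hand side converges, which is exactly the statement that $x\mapsto X(\cdot,x)$ is Gateaux differentiable along $H_\alpha$ with $\J^G_\alpha X(t,x)h_0=\J^G Z_x(t,0)h_0$, i.e.\ \eqref{DX=DZ}.

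The one point requiring a little care is the convergence topology and the identification of the directional derivative. Theorem \ref{deZ} asserts Gateaux differentiability of $h\mapsto Z_x(\cdot,h)$ as a map $H_\alpha\to H^2_\alpha([0,T])$; by \eqref{Halpha1}, the embedding $H^2_\alpha([0,T])\hookrightarrow \X^2([0,T])$ is continuous, so convergence of the difference quotients in $H^2_\alpha([0,T])$ entails convergence in $\X^2([0,T])$, and the limit operator $\J^G Z_x(t,0)$, which a priori lies in $\mathcal L(H_\alpha,H^2_\alpha([0,T]))$, also lies in $\mathcal L(H_\alpha,\X^2([0,T]))$. This is precisely what is needed for $\J^G_\alpha X(t,x)$ to be well defined in the sense of Definition \ref{derivataaaaa}(iii). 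One should also note that $X(\cdot,x)$ itself is not $H_\alpha$-valued at $t=0$ (since $x\in\X$), which is exactly why the derivative is taken only along $H_\alpha$ and the increment $Z_x(t,h)-Z_x(t,0)$ — an $H_\alpha$-valued quantity for all $t\in[0,T]$ by Proposition \ref{existenceZ} — absorbs all the regularity.

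I do not expect a genuine obstacle here: the substance of the argument was already carried out in Theorem \ref{deZ}, and the present theorem is essentially a restatement through the change of variables $X(t,x+h)=Z_x(t,h)+e^{tA}x$, the only subtlety being the bookkeeping of the two topologies on processes and the $\mathbb P$-a.s.\ (for fixed $t$) identifications from Remark \ref{deriva}, as flagged by the standing convention after Lemma \ref{rapphi}.
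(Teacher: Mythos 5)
Your proposal is correct and follows essentially the same route as the paper: write $X(t,x+sh_0)-X(t,x)=Z_x(t,sh_0)-Z_x(t,0)$ via Remark \ref{deriva}, invoke the Gateaux differentiability of $h\mapsto Z_x(\cdot,h)$ from Theorem \ref{deZ}, and transfer the convergence of the difference quotients from $H^2_\alpha([0,T])$ to $\X^2([0,T])$ through the continuous embedding \eqref{Halpha1}. Your remarks on the two topologies and on the identification of the limit operator match the bookkeeping in the paper's own proof.
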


\begin{proof}
Let $x\in\X$, $h\in H_{\alpha}$, $t\in[0,T]$ and $s>0$. By Remark \ref{deriva} we know that $X(t,x+sh)=Z_x(t,sh)+e^{tA}x$, so $X(t,x+sh)-X(t,x)=Z_x(t,sh)-Z_x(t,0)\in H_{\alpha}$. Hence by \eqref{Halpha1} and Proposition \ref{existenceZ} we have
\begin{align*}
0\leq &\lim_{s\ra 0}\norm{\frac{X(\cdot,x+sh)-X(\cdot,x)}{s}-\J^G Z_x(\cdot,0)h}^2_{\X^2([0,T])}\\
=&\lim_{s\ra 0}\sup_{t\in[0,T]}\E\Bigg[\norm{\frac{X(t,x+sh)-X(t,x)}{s}-\J^G Z_x(t,0)h}^2\Bigg]\\
=&\lim_{s\ra 0}\sup_{t\in[0,T]}\E\Bigg[\norm{\frac{Z_x(t,sh)-Z_x(t,0)}{s}-\J^G Z_x(t,0)h}^2\Bigg]\\
\leq&\|Q^{\alpha}\|_{\mathcal{L}(H_\alpha)}\lim_{s\ra 0}\sup_{t\in[0,T]}\E\Bigg[\norm{\frac{Z_x(t,sh)-Z_x(t,0)}{s}-\J^G Z_x(t,0)h}_\alpha^2\Bigg]\\
=& \|Q^{\alpha}\|_{\mathcal{L}(H_\alpha)}\lim_{s\ra 0}\norm{\frac{Z_x(\cdot,sh)-Z_x(\cdot,0)}{s}-\J^G Z_x(t,0)h}^2_{H^2_\alpha([0,T])}=0.
\end{align*}
Linearity and continuity in $H_\alpha$ of $h\mapsto\J^G_{\alpha}X(t,x)h$ follows from the linearity and continuity of $h\mapsto \J^G Z_x(t,0)h$.
\end{proof}

To end this subsection we state and prove the following corollary.

\begin{cor}
Assume that Hypotheses \ref{hyp1} hold true, let $T>0$ and let $F\in C^1_{b,H_{\alpha}}(\X;H_{\alpha})$. If $g:\X\rightarrow\R$ is a function belonging to $C^1_{b,H_{\alpha}}(\X)$ and $h\in H_{\alpha}$, then for any $x\in X$ and $t\in [0,T]$
\begin{gather}\label{accatena}
((\J^G_\alpha(g\circ X))(t,x))h=\scal{\left(\D_{\alpha}g\right) (X(t,x))}{\J^G_\alpha X(t,x)h}_{\alpha}.
\end{gather}
\end{cor}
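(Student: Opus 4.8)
The plan is to prove \eqref{accatena} by a standard chain rule argument, combining the Gateaux differentiability along $H_\alpha$ of $X(t,\cdot)$ established in Theorem \ref{deX} with the differentiability of $g$ along $H_\alpha$ assumed by hypothesis. First I would fix $x\in\X$, $t\in[0,T]$ and $h\in H_\alpha$, and for $s>0$ consider the difference quotient
\[
\frac{g(X(t,x+sh))-g(X(t,x))}{s}.
\]
By Remark \ref{deriva} and the discussion in Section \ref{ExAndUni}, $X(t,x+sh)-X(t,x)=Z_x(t,sh)-Z_x(t,0)$ is an $H_\alpha$-valued random variable, so the increment of the argument of $g$ lies in $H_\alpha$ and it is legitimate to use the $H_\alpha$-differentiability of $g$. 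Writing $\xi_s:=X(t,x+sh)-X(t,x)\in H_\alpha$, I would expand
\[
g(X(t,x)+\xi_s)-g(X(t,x))=\J_\alpha g(X(t,x))\xi_s+R(\xi_s),
\]
where $\norm{R(\xi_s)}=o(\norm{\xi_s}_\alpha)$ as $\norm{\xi_s}_\alpha\to 0$, by the definition of differentiability along $H_\alpha$ (Definition \ref{derivataaaaa}(i)). Dividing by $s$ gives
\[
\frac{g(X(t,x+sh))-g(X(t,x))}{s}=\J_\alpha g(X(t,x))\frac{\xi_s}{s}+\frac{R(\xi_s)}{s}.
\]

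Next I would pass to the limit $s\to 0$. By Theorem \ref{deX}, $\xi_s/s=(Z_x(t,sh)-Z_x(t,0))/s\to \J^G Z_x(t,0)h=\J^G_\alpha X(t,x)h$ in $H_\alpha$ (more precisely in $H^2_\alpha([0,T])$, hence at the fixed time $t$ after possibly extracting the a.s.\ convergence along the conventions stated at the end of Section \ref{lip}), so the first term converges to $\J_\alpha g(X(t,x))\big(\J^G_\alpha X(t,x)h\big)$, which by the Riesz representation in Definition \ref{derivataaaaa}(i) equals $\scal{\D_\alpha g(X(t,x))}{\J^G_\alpha X(t,x)h}_\alpha$. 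For the remainder term, since $\norm{\xi_s}_\alpha\to 0$ (because $h\mapsto Z_x(t,h)$ is continuous in $H_\alpha$ by Proposition \ref{existenceZ}, or directly since $\xi_s/s$ is convergent and hence $\xi_s\to 0$), I would write $R(\xi_s)/s=\big(R(\xi_s)/\norm{\xi_s}_\alpha\big)\cdot\big(\norm{\xi_s}_\alpha/s\big)$; the first factor tends to $0$ and the second is bounded because $\xi_s/s$ converges in $H_\alpha$, whence $R(\xi_s)/s\to 0$. This yields \eqref{accatena} and shows $(\J^G_\alpha(g\circ X))(t,x)h$ exists with the stated value; linearity and continuity in $h$ follow from those of $\J^G_\alpha X(t,x)$ (Theorem \ref{deX}) and the boundedness of $\J_\alpha g(X(t,x))$ (since $g\in C^1_{b,H_\alpha}(\X)$).

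The main obstacle is a measure-theoretic subtlety rather than a conceptual one: the convergences furnished by Theorems \ref{deX} and \ref{existenceZ} are in the norm of $H^2_\alpha([0,T])$, i.e.\ in $L^2(\Omega;H_\alpha)$ uniformly in $t$, whereas the chain-rule expansion above is pointwise in $\omega$; so care is needed either to pass to a subsequence along which $\xi_{s_n}/s_n\to\J^G_\alpha X(t,x)h$ almost surely, or to carry out the whole argument in the $L^2(\Omega;H_\alpha)$ norm using that $g\in C^1_{b,H_\alpha}$ has bounded derivative (so that dominated convergence applies to control $R(\xi_s)/s$). Given the convention announced at the end of Section \ref{lip} that all statements involving processes are understood $\mathbb{P}$-a.s.\ for fixed $t$, the subsequence argument is the cleanest route, and the rest is routine.
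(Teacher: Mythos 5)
Your proposal is correct and follows essentially the same route as the paper: there too one writes $X(t,x+\eps h)-X(t,x)=\eps\J^G Z_x(t,0)h+K_\eps(t,x,h)$ with $\norm{K_\eps(\cdot,x,h)}_{H^2_\alpha([0,T])}$ of order $o(\eps)$, expands $g$ via its differentiability along $H_\alpha$ at $X(t,x)$, and controls the remainder using the boundedness of $\J_\alpha g$. The only difference is that the paper settles the measure-theoretic point you flag by carrying out the whole estimate in $L^2(\Omega)$ (your second option, via Cauchy--Schwarz and $\sup_{x}\norm{\J_\alpha g(x)}_{\mathcal{L}(H_\alpha)}<+\infty$) rather than by extracting an almost surely convergent subsequence.
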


\begin{proof}
Since $g\in C^1_{b,H_{\alpha}}(\X)$, then for every $x\in\X$ and $h\in H_{\alpha}$
\[g(x+\epsilon h)=g(x)+\eps \scal{\D_{\alpha}g(x)}{h}_{\alpha}+o(\eps)\qquad \eps\ra 0.   \]
We define for $x\in\X$, $h\in H_{\alpha}$, $t\in [0,T]$ and $\eps>0$
\[K_{\eps}(t,x,h):=X(t,x+\eps h)-X(t,x)-\eps\J^G Z_x(t,0)h=Z_x(t,h)-Z_x(t,0)-\eps\J^G Z_x(t,0)h.\]
Observe that by Proposition \ref{existenceZ}, we have that $\norm{K_{\eps}(\cdot,x,h)}^2_{H_{\alpha}^2
([0,T])}=o(\eps)$, when $\eps$ goes to zero. Hence for $\eps\ra 0$
\begin{align*}
g\big(X(t,x+\eps h)\big)& =g\big(X(t,x)+\eps\J^G Z_x(t,0)h+K_{\eps}(t,x,h)\big)\\
& =g\big(X(t,x)+\eps(\J^G Z_x(t,0)h+\eps^{-1}K_{\eps}(t,x,h)\big)\\
& =g(X(t,x))+\eps\scal{(\D_{\alpha}g)(X(t,x))}{\J^G Z_x(t,0)h+\eps^{-1}K_{\eps}(t,x,h)}_\alpha+o(\eps)\\
& =g(X(t,x))+\eps\scal{(\D_{\alpha}g)(X(t,x))}{\J^G Z_x(t,0)h}_\alpha\\
&\phantom{aaaaaaaaaaaaaaaaaaaaaaaaaaaa}+\scal{(\D_{\alpha}g)(X(t,x))}{K_{\eps}(t,x,h)}_\alpha+o(\eps).
\end{align*}
So for $\eps\ra 0$ we get
\begin{align*}
0\leq &\E\sq{\abs{g\big(X(t,x+\eps h)\big)-g(X(t,x))-\eps\scal{(\D_{\alpha}g)(X(t,x))}{\J^G Z_x(t,0)h}_\alpha}^2}\\
\leq &\sup_{t\in[0,T]}\E\sq{\abs{g\big(X(t,x+\eps h)\big)-g(X(t,x))-\eps\scal{(\D_{\alpha}g)(X(t,x))}{\J^G Z_x(t,0)h}_\alpha}^2}\\
= &\sup_{t\in[0,T]}\E\Big[\abs{\scal{(\D_{\alpha}g)(X(t,x))}{K_{\eps}(t,x,h)}_\alpha}^2\Big]+o(\eps)\\
\leq &\pa{\sup_{x\in\X}\norm{\J_\alpha g(x)}_{\mathcal{L}(H_\alpha)}}\pa{\sup_{t\in[0,T]}\E\Big[\norm{K_{\eps}(t,x,h)}_\alpha^2\Big]}+o(\eps)\\
=& \pa{\sup_{x\in\X}\norm{\J_\alpha g(x)}_{\mathcal{L}(H_\alpha)}}\norm{K_{\eps}(\cdot,x,h)}_{H_\alpha^2([0,T])}^p+o(\eps)=o(\eps)
\end{align*}
This imples that $\mathbb{P}$-a.s it holds $((\J^G_\alpha(g\circ X))(t,x))h=\scal{\left(\D_{\alpha}g\right) (X(t,x))}{\J^G Z_x(t,0)h}_{\alpha}$ and the proof is concluded recalling Theorem \ref{deX}.
\end{proof}

\subsection{Proof of Theorem \ref{Hstrong}}\label{StrongFellerProperty}

Throughout this subsection $X(t,x)$ will denote the mild solution of \eqref{eqF}, while $P(t)$ is its associated transition semigroup, defined in \eqref{transition}. To prove Theorem \ref{Hstrong} we will use a similar procedure to the one used in \cite[section 7.7]{DA-ZA1} and \cite{PES-ZA1}. First we are going to prove Theorem \ref{Hstrong} for sufficiently regular functions $F$ and $\varphi$. Note that Lemma \ref{FLemma} is an adaptation of Lemma \ref{rapphi} to our situation.

\begin{lemm}\label{FLemma}
Assume Hypotheses \ref{hyp1} hold true. Let $F\in C_{b,H_{\alpha}}^2(\X;H_{\alpha})$, $\varphi\in C_{b,H_{\alpha}}^2(\X)$ and $x\in\X$. Then for each $t\in(0,T]$, $P(t)\varphi\in C^2_{b,H_{\alpha}}(\X)$ and 
\begin{equation}\label{Feq}
\varphi(X(t,x))=P(t)\varphi(x)+\int_0^t\scal{\D_{\alpha} P(t-s)\varphi(X(s,x))}{Q^{\alpha}dW(s)}_{\alpha}.
\end{equation}
\end{lemm}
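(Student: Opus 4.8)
The plan is to reduce Lemma \ref{FLemma} to the already-available Lemma \ref{rapphi} by working in the Hilbert space $H_\alpha$ rather than in $\X$, exactly as the whole strategy of Section \ref{STRONGHI} suggests. Recall from Remark \ref{deriva} and Theorem \ref{Hsmild} that $X(t,x)=Z_x(t,0)+e^{tA}x$, where $Z_x(t,0)$ is the mild solution in $H_\alpha^2([0,T])$ of the auxiliary equation \eqref{eqcameron} with $h=0$, whose nonlinearity is $F_{x,t}$ and whose noise is $W_\alpha(t)=Q^\alpha W(t)$, a genuine $H_\alpha$-valued $Q^{2\alpha}$-Wiener process by Proposition \ref{propHalpha}\eqref{MBH}. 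Since $F\in C^2_{b,H_\alpha}(\X;H_\alpha)$, Lemma \ref{Falpha} gives $F_{x,t}\in C^2_b(H_\alpha;H_\alpha)$, and it is bounded in the Lipschitz sense by $L_{F,\alpha}$; similarly $\varphi\in C^2_{b,H_\alpha}(\X)$ restricts to a function in $C^2_b(H_\alpha)$. Thus the auxiliary equation \eqref{eqcameron} is of exactly the type to which Lemma \ref{rapphi} applies, now with the underlying space $H_\alpha$, the operator $A$ (which generates a contraction semigroup on $H_\alpha$ by Proposition \ref{propHalpha}\eqref{Halpha4}), the diffusion coefficient $Q^\alpha\in\mathcal L(H_\alpha)$, and the nonlinearity $F_{x,t}$.

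First I would introduce, for fixed $x\in\X$, the transition semigroup $Q_x(t)$ associated with \eqref{eqcameron} acting on functions $\psi\in B_b(H_\alpha)$, namely $Q_x(t)\psi(h):=\E[\psi(Z_x(t,h))]$. Applying Lemma \ref{rapphi} in $H_\alpha$ to the equation \eqref{eqcameron} (with $F_{x,t}$ in place of $F$ and any $\psi\in C^2_b(H_\alpha)$) yields $Q_x(t)\psi\in C^2_b(H_\alpha)$ and the Itô-type identity
\[
\psi(Z_x(t,h))=Q_x(t)\psi(h)+\int_0^t\scal{\D_\alpha Q_x(t-s)\psi(Z_x(s,h))}{Q^\alpha dW(s)}_\alpha,\quad\mathbb P\text{-a.s.}
\]
Next I would relate $Q_x(t)$ to $P(t)$: since $X(t,x)=Z_x(t,0)+e^{tA}x$ and $X(t,x+h)=Z_x(t,h)+e^{tA}x$ (again Remark \ref{deriva}), for $\varphi\in C^2_{b,H_\alpha}(\X)$ one has
\[
P(t)\varphi(x+h)=\E[\varphi(Z_x(t,h)+e^{tA}x)]=Q_x(t)\psi_x(h),\qquad \psi_x(h):=\varphi(h+e^{tA}x).
\]
Because $\varphi\in C^2_{b,H_\alpha}(\X)$, Lemma \ref{Falpha}-type reasoning (the same translation argument as in its proof, now for $\varphi$ instead of $F$) shows $\psi_x\in C^2_b(H_\alpha)$ with $\J\psi_x(h)=\J_\alpha\varphi(h+e^{tA}x)$; hence $Q_x(t)\psi_x\in C^2_b(H_\alpha)$, which combined with the identity above, evaluated along the curve $h\mapsto P(t)\varphi(x+h)$, gives $P(t)\varphi\in C^2_{b,H_\alpha}(\X)$ and, moreover, $\D_\alpha[P(t)\varphi](x+h)=\D_\alpha[Q_x(t)\psi_x](h)$. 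Evaluating the displayed Itô formula at $h=0$ and $\psi=\psi_x$, and rewriting $Z_x(s,0)=X(s,x)-e^{sA}x$, turns it into
\[
\varphi(X(t,x))=P(t)\varphi(x)+\int_0^t\scal{\D_\alpha[P(t-s)\varphi](X(s,x))}{Q^\alpha dW(s)}_\alpha,
\]
which is \eqref{Feq}. One bookkeeping point to check is the matching of the stochastic integrands: $\D_\alpha Q_x(t-s)\psi_x(Z_x(s,0))$ must be identified with $\D_\alpha[P(t-s)\varphi](X(s,x))$, but this follows because $Q_x(t-s)\psi_x$ is, up to the fixed shift by $e^{(t-s)A}x$ inside the Markov evolution, the function $h\mapsto P(t-s)\varphi(h+e^{sA}x)$ — here one uses the semigroup/flow consistency $Z_x(t,\cdot)$ restarted at time $s$, i.e. the Markov property of $Z_x$ together with the cocycle relation between $F_{x,\cdot}$ and $F_{y,\cdot}$ for $y$ differing by an $e^{tA}$-orbit point.

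The main obstacle I expect is precisely this last identification of integrands, i.e. verifying carefully that the transition semigroup $Q_x(t)$ of the auxiliary equation \eqref{eqcameron} is compatible, under the shift $h\mapsto h+e^{tA}x$, with the original semigroup $P(t)$ in a way that survives differentiation in $\D_\alpha$ and plugging in the random argument $X(s,x)$. Concretely one must show the flow identity $Z_x(t,h)+e^{tA}x = $ (mild solution of \eqref{eqF} started at $x+h$) not just in law but pathwise — which Remark \ref{deriva} already provides — and then invoke the Markov property at the intermediate time $s$ to reduce $P(t-s)\varphi$ composed with $X(s,x)$ to $Q_x(t-s)\psi_x$ composed with $Z_x(s,0)$; the differentiability statement $P(t)\varphi\in C^2_{b,H_\alpha}(\X)$ then comes for free from the corresponding statement for $Q_x(t)\psi_x$ in $C^2_b(H_\alpha)$ via Proposition \ref{dalpha}-style comparison. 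Everything else — the regularity of $F_{x,t}$ and $\psi_x$, the contraction/trace hypotheses needed to apply Lemma \ref{rapphi} in $H_\alpha$ — is routine given Lemmas \ref{acuqa}, \ref{Falpha} and Proposition \ref{propHalpha}.
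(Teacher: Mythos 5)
Your proposal is correct and follows essentially the same route as the paper: the paper likewise introduces the auxiliary transition semigroup $T_x(t)\psi(h):=\E[\psi(Z_x(t,h))]$ (your $Q_x(t)$), the shifted test function $\widehat\varphi(h):=\varphi(e^{tA}x+h)$ (your $\psi_x$), applies Lemma \ref{rapphi} on $H_\alpha$ via Lemma \ref{Falpha}, and transfers the identity back through $P(t)\varphi(x+h)=T_x(t)\widehat\varphi(h)$. The integrand-matching issue you flag as the main obstacle is precisely the step the paper handles (rather tersely) through the identification of $\D_\alpha P(t)\varphi$ with $\D T_x(t)\widehat\varphi(0)$, so your extra care there is consistent with, not divergent from, the published argument.
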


\begin{proof}
Let $x\in\X$ and consider the transition semigroup
\[T_x(t)\psi(h):=\mathbb{E}[\psi(Z_x(t,h))],\quad t\in[0,T],\ h\in H_\alpha,\ \psi\in B_b(H_{\alpha});\]
where $Z_x(t,h)$ is the mild solution of \eqref{eqcameron}.  Let $\varphi\in C_{b,H_{\alpha}}^2(\X)$ and consider the function $\widehat{\varphi}(h):=\varphi(e^{tA}x+h)$ on $H_{\alpha}$. Proceeding in the same way as in Lemma \ref{Falpha} we have $\widehat{\varphi}\in C_{b}^2(H_{\alpha})$. Moreover since by Theorem \ref{Hsmild} it holds $X(t,x)=e^{tA}x+Z_x(t,0)$ then
\[P(t)\varphi(x)=T_x(t)\widehat{\varphi}(0).\]
We recall that, by Lemma \ref{rapphi} and Lemma \ref{Falpha}, $T_x(t)\hat{\varphi}\in C_b^2(H_\alpha)$. Moreover, by Remark \ref{deriva}, if $x\in\X$, $h\in H_{\alpha}$ and $t\in[0,T]$ then 
\begin{equation*}
P(t)\varphi(x+h)=\mathbb{E}\big[\varphi(e^{tA}x+Z_x(t,h))\big]=T_x(t)\widehat{\varphi}(h).
\end{equation*}
We claim that $P(t)\varphi$ is differentiable along $H_\alpha$. Indeed for every $x\in \X$
\begin{align}
&\lim_{\norm{h}_\alpha\ra0}\frac{\abs{P(t)\varphi(x+h)-P(t)\varphi(x)-\gen{D T_x(t)\hat{\varphi}(0),h}_\alpha}}{\norm{h}_\alpha}\notag\\
=&\lim_{\norm{h}_\alpha\ra0}\frac{\abs{T_x(t)\hat{\varphi}(h)-T_x(t)\hat{\varphi}(0)-\gen{D T_x(t)\hat{\varphi}(0),h}_\alpha}}{\norm{h}_\alpha}=0.\label{concam}
\end{align}
So $P(t)\varphi$ belongs to $C^1_{b,H_\alpha}(\X)$. A similar argument gives $P(t)\varphi\in C^2_{b,H_{\alpha}}(\X)$.
By Lemma \ref{rapphi}, for each $t\in (0,T]$, $x\in\X$ and $h\in H_{\alpha}$ we have
\begin{equation}\label{lemcam}
\widehat{\varphi}(Z_x(t,h))=T_x(t)\widehat{\varphi}(h)+\int_0^t\scal{\D T_x(t-s)\widehat{\varphi}(Z_x(s,h))}{Q^{\alpha}dW(s)}_{\alpha}.
\end{equation}
So \eqref{Feq} follows by \eqref{lemcam} with $h=0$ and \eqref{concam}. 
\end{proof}

Now we prove a variant of the Bismut--Elworthy--Li formula. 

\begin{prop}\label{BEL}
Assume that Hypotheses \ref{hyp1} hold. Let $F\in C_{b,H_{\alpha}}^2(\X;H_{\alpha})$ and $\varphi\in C_{b,H_{\alpha}}^2(\X)$. For every $x\in\X$, $h\in H_{\alpha}$ and $t\in(0,T]$
\begin{align}\label{FDeq}
\scal{\D_{\alpha} P(t)\varphi(x)}{h}_{\alpha}=\frac{1}{t}\mathbb{E}\left[\varphi(X(t,x))\int_0^t\scal{\J^G_{\alpha} X(s,x)h}{Q^{\alpha}dW(s)}_{\alpha}\right].
\end{align}
Furthermore
\begin{align}\label{FEsti}
\vert \scal{\D_{\alpha} P(t)\varphi(x)}{h}_{\alpha}\vert^2 \leq \frac{1}{t^2}\Vert\varphi\Vert^2_{\infty}\mathbb{E}\left[\int_0^t\norm{\J^G_{\alpha} X(s,x)h}_{\alpha}^{2}ds\right].
\end{align}
\end{prop}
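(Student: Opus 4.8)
The plan is to establish the integration-by-parts identity \eqref{FDeq} first and then deduce the estimate \eqref{FEsti} by a routine application of the Cauchy--Schwarz inequality. For \eqref{FDeq}, the starting point is Lemma \ref{FLemma}, which gives, for the regular data $F\in C^2_{b,H_\alpha}(\X;H_\alpha)$ and $\varphi\in C^2_{b,H_\alpha}(\X)$, the martingale representation
\[
\varphi(X(t,x))=P(t)\varphi(x)+\int_0^t\scal{\D_{\alpha}P(t-s)\varphi(X(s,x))}{Q^{\alpha}dW(s)}_{\alpha}.
\]
First I would fix $h\in H_\alpha$, multiply both sides of this identity by the It\^o integral
\[
N_t:=\int_0^t\scal{\J^G_\alpha X(s,x)h}{Q^{\alpha}dW(s)}_\alpha,
\]
and take expectations. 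Since $N_t$ is a (zero-mean) martingale, $\E[P(t)\varphi(x)\,N_t]=P(t)\varphi(x)\,\E[N_t]=0$, so the left side becomes $\E[\varphi(X(t,x))N_t]$. For the stochastic-integral term, I would apply the It\^o isometry in $H_\alpha$ (recall from Proposition \ref{propHalpha}\eqref{MBH} that $W_\alpha(t)=Q^{\alpha}W(t)$ is an $H_\alpha$-valued $Q^{2\alpha}$-Wiener process), which turns the product of the two integrals into
\[
\E\left[\int_0^t\scal{Q^\alpha \D_\alpha P(t-s)\varphi(X(s,x))}{Q^{\alpha}\J^G_\alpha X(s,x)h}_\alpha\,ds\right]
\]
(up to the correct covariance weighting by $Q^{2\alpha}$ that is absorbed into the $\scal{\cdot}{\cdot}_\alpha$ pairing via $Q^{-\alpha}$).

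The key identity that collapses this integrand is the chain-rule/flow relation for the $H_\alpha$-gradient along the solution: using the Markov property and the differentiability of $x\mapsto X(\cdot,x)$ along $H_\alpha$ (Theorem \ref{deX}), together with the corollary on the chain rule \eqref{accatena}, one has
\[
\scal{\D_\alpha P(t-s)\varphi(X(s,x))}{\J^G_\alpha X(s,x)h}_\alpha=\J^G_\alpha\big(P(t-s)\varphi\circ X(s,\cdot)\big)(x)h,
\]
and by the semigroup/cocycle property $P(t-s)\varphi(X(s,x))=\E[\varphi(X(t,x))\mid\mathcal F_s]$, so the process $s\mapsto P(t-s)\varphi(X(s,x))$ has a known drift structure. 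The cleanest route is the standard Elworthy trick: differentiate $s\mapsto \E[P(t-s)\varphi(X(s,x))]$ — which is constant in $s$ and equals $P(t)\varphi(x)$ — against the flow derivative, so that the $s$-integrand above is in fact constant in $s$ and equals $\scal{\D_\alpha P(t)\varphi(x)}{h}_\alpha$. Integrating the constant over $[0,t]$ produces the factor $t$, and dividing by $t$ yields \eqref{FDeq}.

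The main obstacle I anticipate is making the "constant-in-$s$ integrand" step rigorous in the $H_\alpha$-geometry: one must verify that the It\^o isometry applies with the correct covariance (the $Q^{2\alpha}$-weighting of $W_\alpha$ matching exactly the $\scal{\cdot}{\cdot}_\alpha$-pairing so that no spurious operator factors survive), and that $\D_\alpha P(t-s)\varphi$ composed with the flow derivative $\J^G_\alpha X(s,x)$ really telescopes — this uses the chain rule \eqref{accatena}, the flow property $\J^G_\alpha X(t,x)=\J^G_\alpha X(t-s,X(s,x))\circ\J^G_\alpha X(s,x)$, and the tower property. One must also check the integrability needed to justify taking expectations through the It\^o integral, which follows from $\norm{\varphi}_\infty<\infty$ and the uniform bound \eqref{stigro} on $\J^G Z_x$. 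Once \eqref{FDeq} is in hand, \eqref{FEsti} is immediate: by Cauchy--Schwarz in $L^2(\Omega)$,
\[
\abs{\scal{\D_\alpha P(t)\varphi(x)}{h}_\alpha}^2\le\frac{1}{t^2}\,\E\big[\varphi(X(t,x))^2\big]\,\E\left[\abs{N_t}^2\right]\le\frac{1}{t^2}\norm{\varphi}_\infty^2\,\E\left[\int_0^t\norm{\J^G_\alpha X(s,x)h}_\alpha^2\,ds\right],
\]
the last step again by the It\^o isometry in $H_\alpha$ for $N_t$.
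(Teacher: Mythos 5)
Your plan is correct and follows essentially the same route as the paper: multiply the martingale representation of Lemma \ref{FLemma} by $\int_0^t\scal{\J^G_\alpha X(s,x)h}{Q^\alpha dW(s)}_\alpha$, kill the $P(t)\varphi(x)$ term via the martingale property (justified by the $H^2_\alpha([0,T])$ bound on $\J^G Z_x(\cdot,0)h$), apply the It\^o isometry, and use the chain rule \eqref{accatena} together with the exchange of $\J^G_\alpha$ with $\E$ and the semigroup identity $P(s)P(t-s)\varphi=P(t)\varphi$ to see that the resulting integrand is constant in $s$. The covariance-weighting worry you flag resolves exactly as you suspect, since $\scal{u}{Q^\alpha dW}_\alpha=\scal{Q^{-\alpha}u}{dW}$ makes the isometry produce the $\scal{\cdot}{\cdot}_\alpha$-pairing with no extra operator factors, and \eqref{FEsti} then follows by Cauchy--Schwarz and the isometry as you state.
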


\begin{proof}
(\ref{FEsti}) is a standard consequence of (\ref{FDeq}) and the It\^o isometry (see \cite[Lemma 3.1.5]{OKS1}) so we will only show (\ref{FDeq}). We recall that, by Theorem \ref{deX}, $\J^G_{\alpha}X(t,x)h=\J Z_x(t,0)h$. Let $h\in H_{\alpha}$, $t\in(0,T]$ and $x\in\X$. Multiplying both sides of (\ref{Feq}) by
\[
\int_0^t\scal{\J^G Z_x(s,0)h}{Q^{\alpha}dW(s)}_{\alpha},
\] 
and taking the expectations we get
\begin{align*}
&\mathbb{E}\bigg[\varphi(X(t,x))\int_0^t \scal{\J^G Z_x(s,0)h}{Q^{\alpha}dW(s)}_{\alpha}\bigg]\\
=&\mathbb{E}\sq{P(t)\varphi(x)\int_0^t\scal{\J^G Z_x(s,0)h}{Q^{\alpha}dW(s)}_{\alpha}}\\
+&\mathbb{E}\sq{\int_0^t\scal{\D_{\alpha} P(t-s)\varphi(X(s,x))}{Q^{\alpha}dW(s)}_{\alpha}\int_0^t\scal{\J^G Z_x(s,0)h}{Q^{\alpha}dW(s)}_{\alpha}}.
\end{align*}
We recall that the process $\{Q^{\alpha}W(s)\}_{s\geq 0}$ is a $H_{\alpha}$-valued Wiener process (see Proposition \ref{propHalpha}\eqref{MBH}). By \cite[Remark 2]{EL-LI1}, the process $\{\int_0^t\scal{\J^G Z_x(s,0)h}{Q^{\alpha}dW(s)}_{\alpha}\}_{t\geq 0}$ is a martingale provided that for every $t\in[0,T]$ and $h\in H_\alpha$
\[\int_0^t\mathbb{E}\big[\Vert \J^G Z_x(s,0)h\Vert_{\alpha}^2\big]ds<+\infty.\]
By Theorem \ref{deZ}, we know that $\J^G Z_x(\cdot,0)h\in H^2_{\alpha}([0,T])$, then for any $t\in[0,T]$ and $h\in H_\alpha$
\[\int_0^t\mathbb{E}\Big[\Vert \J^G Z_x(s,0)h\Vert_{\alpha}^2\Big]ds\leq T\|\J^G Z_x(\cdot,0)h\|^2_{H_\alpha^2([0,T])}<+\infty.\] 
Hence $t\mapsto\int_0^t\langle\J^G Z_x(s,0)h,Q^{\alpha}dW(s)\rangle_{\alpha}$ is a martingale and we have for every $t\in[0,T]$, $x\in\X$ and $h\in H_\alpha$
\[\mathbb{E}\sq{P(t)\varphi(x)\int_0^t\scal{\J^G Z_x(s,0)h}{Q^{\alpha}dW(s)}_{\alpha}}=0.\]
Hence by \eqref{accatena}, with $G=P(t-s)\varphi$, and the It\^o isometry we obtain
\begin{align*}
&\mathbb{E}\sq{\int_0^t\scal{(\D_{\alpha} P(t-s)\varphi)(X(s,x))}{Q^{\alpha}dW(s)}_{\alpha}\int_0^t\scal{\J^G Z_x(s,0)h}{Q^{\alpha}dW(s)}_{\alpha}}\\
=&\mathbb{E}\sq{\int_0^t\scal{(\D_{\alpha} P(t-s)\varphi)(X(s,x))}{\J^G Z_x(s,0)h}_{\alpha}ds}\\
=&\mathbb{E}\Bigg[\int_0^t \J^G_\alpha(((P(t-s)\varphi)\circ X)(s,x))h ds\Bigg]\\
=&\int_0^t \big(\J^G_{\alpha}\mathbb{E}\big[(P(t-s)\varphi\circ X)(s,x)\big]\big)hds.
\end{align*}
By the very definition of $P(t)$ we know that $\mathbb{E}[(P(t-s)\varphi\circ X)(s,x)]=(P(s)P(t-s)\varphi)(x)=P(t)\varphi(x)$. Recalling that $P(t)\varphi$ belongs to $C^2_{b,H_\alpha}(\X)$ it holds $\J^G_\alpha P(t)\varphi(x)=\J_\alpha P(t)\varphi(x)$. So, by Lemma \ref{FLemma}, we conclude
\begin{align*}
\mathbb{E}\sq{\varphi(X(t,x))\int_0^t\scal{\J^G Z_x(s,0)h}{Q^{\alpha}dW(s)}_{\alpha}} &=\int_0^t\scal{\D_{\alpha} P(t)\varphi(x)}{h}_{\alpha}ds\\
&=t\scal{\D_{\alpha} P(t)\varphi(x)}{h}_{\alpha}.
\end{align*}
Recalling \eqref{DX=DZ} we get the thesis.
\end{proof}

The last step before proving Theorem \ref{Hstrong} is the following corollary.

\begin{cor}
Assume that Hypotheses \ref{hyp1} hold. Let $F\in C_{b,H_{\alpha}}^2(\X;H_{\alpha})$ and $\varphi\in C_{b,H_{\alpha}}^2(\X)$. For every $t\in(0,T]$, $x\in\X$ and $h\in H_{\alpha}$ 
\begin{equation}\label{HlipC}
\vert P(t)\varphi(x+h)-P(t)\varphi(x)\vert\leq \frac{e^{L_{F,\alpha} T}}{\sqrt{t}}\Vert\varphi\Vert_{\infty}\norm{h}_{\alpha}.
\end{equation}
\end{cor}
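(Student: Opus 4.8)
The plan is to derive \eqref{HlipC} directly from the gradient estimate \eqref{FEsti} together with the pointwise bound \eqref{stigro} on the derivative process $Y(s,h) = \J^G_\alpha X(s,x)h$. First I would fix $t\in(0,T]$, $x\in\X$ and $h\in H_\alpha$, and use the fundamental theorem of calculus along the segment $[x,x+h]$: since by Lemma \ref{FLemma} the function $P(t)\varphi$ belongs to $C^1_{b,H_\alpha}(\X)$, the map $r\mapsto P(t)\varphi(x+rh)$ is differentiable on $[0,1]$ with derivative $\scal{\D_\alpha P(t)\varphi(x+rh)}{h}_\alpha$, whence
\[
|P(t)\varphi(x+h)-P(t)\varphi(x)| \le \int_0^1 \bigl|\scal{\D_\alpha P(t)\varphi(x+rh)}{h}_\alpha\bigr|\, dr \le \sup_{z\in\X}\bigl|\scal{\D_\alpha P(t)\varphi(z)}{h}_\alpha\bigr|.
\]

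Next I would apply Proposition \ref{BEL}, specifically the estimate \eqref{FEsti}, at the point $z$: for every $z\in\X$,
\[
\bigl|\scal{\D_\alpha P(t)\varphi(z)}{h}_\alpha\bigr|^2 \le \frac{1}{t^2}\|\varphi\|_\infty^2\, \mathbb{E}\!\left[\int_0^t \norm{\J^G_\alpha X(s,z)h}_\alpha^2\, ds\right].
\]
By Theorem \ref{deX} we have $\J^G_\alpha X(s,z)h = \J^G Z_z(s,0)h = Y(s,h)$, the mild solution of \eqref{eqdeZ} with $h_0=h$, so the bound \eqref{stigro} of Theorem \ref{deZ} gives $\norm{\J^G_\alpha X(s,z)h}_\alpha \le e^{TL_{F,\alpha}}\norm{h}_\alpha$ for all $s\in[0,t]$, uniformly in $z$. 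Substituting, the integral is bounded by $t\, e^{2TL_{F,\alpha}}\norm{h}_\alpha^2$, so
\[
\bigl|\scal{\D_\alpha P(t)\varphi(z)}{h}_\alpha\bigr|^2 \le \frac{1}{t^2}\|\varphi\|_\infty^2\, t\, e^{2TL_{F,\alpha}}\norm{h}_\alpha^2 = \frac{e^{2TL_{F,\alpha}}}{t}\|\varphi\|_\infty^2\norm{h}_\alpha^2.
\]
Taking square roots and combining with the first display yields \eqref{HlipC}.

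I do not expect any serious obstacle here: the corollary is essentially a bookkeeping combination of \eqref{FEsti} and \eqref{stigro}, with the mild regularity $P(t)\varphi\in C^1_{b,H_\alpha}(\X)$ (from Lemma \ref{FLemma}) used only to justify integrating the $H_\alpha$-gradient along the segment. The one point that needs a word of care is that the constant in \eqref{stigro} is uniform in the base point $z$ and in $s\in[0,t]\subseteq[0,T]$ — this is immediate since the right-hand side of \eqref{stigro} does not depend on $z$ and is monotone in the time horizon — so that the supremum over $z\in\X$ passes through cleanly. (The genuine work of the section, namely establishing \eqref{FEsti} and \eqref{stigro} themselves, has already been done in Proposition \ref{BEL} and Theorem \ref{deZ}.)
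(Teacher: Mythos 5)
Your proposal is correct and follows essentially the same route as the paper: both derive the uniform gradient bound $\abs{\scal{\D_\alpha P(t)\varphi(z)}{h}_\alpha}\le t^{-1/2}e^{L_{F,\alpha}T}\norm{\varphi}_\infty\norm{h}_\alpha$ by combining \eqref{FEsti}, \eqref{DX=DZ} and \eqref{stigro}, and then conclude with a mean-value-type argument along the segment from $x$ to $x+h$ (the paper uses the mean value theorem where you integrate the derivative, an immaterial difference). No gaps.
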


\begin{proof}
Taking into account \eqref{stigro}, \eqref{DX=DZ} and \eqref{FEsti} we obtain the gradient estimate 
\begin{align}\label{stigra}
\Vert \D_{\alpha} P(t)\varphi(x)\Vert_{\alpha}\leq \frac{e^{L_{F,\alpha} T}}{\sqrt{t}}\Vert\varphi\Vert_{\infty},\qquad t\in(0,T],\ x\in\X.
\end{align}
Let $x\in\X$ and $h\in H_{\alpha}$, then by the mean value theorem, there exists $c_h>0$ such that
\[P(t)\varphi(x+h)-P(t)\varphi(x)=\scal{\D_{\alpha} P(t)\varphi(x+c_hh)}{h}.\]
So, by \eqref{stigra}, the thesis follows.
\end{proof}

Now we can prove Theorem \ref{Hstrong}.

\begin{proof}[of Theorem \ref{Hstrong}]
We start by assuming that  $F\in C_{b,H_{\alpha}}^2(\X;H_{\alpha})$ and we show that since \eqref{HlipC} is verified for $\varphi\in C^2_{b,H_{\alpha}}(\X)$ then it also holds for $\varphi\in B_{b}(\X)$. We recall that by \cite[Theorem 5.4]{ZAB1}, if $\varphi\in C_b(\X)$ then there exists a sequence $\{\varphi_n\}_{n\in\N}\subseteq C^2_b(\X)$ such that, for every $x\in\X$, 
\[\lim_{n\rightarrow+\infty}\varphi_n(x)=\varphi(x),\qquad\qquad\norm{\varphi_n}_{\infty}\leq \norm{\varphi}_{\infty}.\] 
Since $C^2_b(\X)\subseteq C^2_{b,H_\alpha}(\X)$ by Proposition \ref{dalpha}, \eqref{HlipC} yields 
\begin{equation*}
\vert P(t)\varphi_n(x+h)-P(t)\varphi_n(x)\vert\leq \frac{e^{L_{F,\alpha}T}}{\sqrt{t}}\Vert\varphi_n\Vert_{\infty}\norm{h}_{\alpha},\qquad n\in\N,\ t\in(0,T],\ h\in H_\alpha.
\end{equation*}
Observe that by the dominated convergence theorem $P(t)\varphi_n(x+h)$ and $P(t)\varphi_n(x)$ converge to $P(t)\varphi(x+h)$ and $P(t)\varphi(x)$, respectively. Therefore \eqref{HlipC} is verified also for $\varphi\in C_b(\X)$. By the Riesz representation theorem and \eqref{HlipC}, for every $x\in\X$, $h\in H_{\alpha}$ and $t\in(0,T]$, we have the following estimate for the total variation of the finite measure $\mathscr{L}(X(x+h,t))-\mathscr{L}(X(x,t))$
\begin{align*}
{\rm Var}\big(\mathscr{L}(X(t,x+h))-\mathscr{L}(X(t,x))\big):=&\sup_{\substack{\varphi\in C_b(\X)\\ \norm{\varphi}_{\infty}\leq 1}}\abs{\int_{\X}\varphi d\bigg (\mathscr{L}\big(X(t,x+h)\big)-\mathscr{L}\big(X(t,x)\big)\bigg)}\\
=&\sup_{\substack{\varphi\in C_b(\X)\\ \norm{\varphi}_{\infty}\leq 1}}\vert P(t)\varphi(x+h)-P(t)\varphi(x)\vert\leq \frac{e^{L_{F,\alpha}T}}{\sqrt{t}}\norm{h}_{\alpha}.
\end{align*}
Let $\varphi\in B_b(\X)$, then for $t\in(0,T]$, $x\in\X$ and $h\in H_\alpha$
\begin{align*}
\vert P(t)\varphi(x+h)-P(t)\varphi(x)\vert 
&=\abs{\int_{\X}\varphi d\Big(\mathscr{L}\big(X(t,x+h)\big)-\mathscr{L}\big(X(t,x)\big)\Big)}\\
&\leq \Vert\varphi\Vert_{\infty}\frac{e^{L_{F,\alpha}T}}{\sqrt{t}}\norm{h}_{\alpha}.
\end{align*}
As a second step, we prove that \eqref{HlipC} is verified for $\varphi\in C^2_{b,H_{\alpha}}(\X)$ if $F$ satisfies Hypotheses \ref{hyp1}. We recall that by Lemma \ref{acuqa}, $F_{x,t}$ is Lipschitz continuous on $H_{\alpha}$, so it is possible to construct a sequence $\{ F^{(n)}_{x,t} \}_{n\in\N}\subseteq C_b^2(H_{\alpha};H_{\alpha})$ (see \cite[Lemma 2.5]{PES-ZA1}) such that the functions $F^{(n)}_{x,t}$ are Lipschitz continuous with Lipschitz constant less or equal than $L_{F,\alpha}$, and
\[\lim_{n\rightarrow+\infty}\|F^{(n)}_{x,t}(h)-F_{x,t}(h)\|_{\alpha}=0,\qquad h\in H_{\alpha}.\]
We consider the transitions semigroups 
\[P^{(n)}(t)\varphi(x):=\mathbb{E}\big[\varphi(X^{(n)}(t,x))\big],\qquad \varphi\in C_b(\X),\]
where $X^{(n)}(t,x):=Z^{(n)}_x(t,0)+e^{tA}x$ and $Z^{(n)}_x(t,0)$ is the mild solution of 
\begin{gather*}
\eqsys{
dZ_x(t,0)=\big(AZ_x(t,0)+F^{(n)}_{x,t}(Z_x(t,0))\big)dt+ Q^{\alpha}dW(t), & t\in(0,T];\\
Z_x(0,0)=0.
}
\end{gather*}
Fix $\varphi\in C^2_{b,H_{\alpha}}(\X)$. Then by \eqref{HlipC} for every $x\in\X$, $h\in H_{\alpha}$ and $t\in (0,T]$,  we get
\[\vert P^{(n)}(t)\varphi(x+h)-P^{(n)}(t)\varphi(x)\vert\leq \frac{e^{L_{F,\alpha}T}}{\sqrt{t}}\Vert\varphi\Vert_{\infty}\norm{h}_{\alpha}.\]
By \cite[Theorem A.1]{PES-ZA1} there exists a subsequence $\{Z^{(n_k)}_x(t,0)\}_{k\in\N}$ such that 
\[X^{(n_k)}(t,x)=Z^{(n_k)}_x(t,0)+e^{tA}x\rightarrow Z_x(t,0)+e^{tA}x=X(t,x),\]
where the convergence is almost surely with respect to $\mathbb{P}$. Since $\varphi$ is bounded and continuous then
\begin{align*}
P^{(n_k)}(t)\varphi(x)&=\E\Big[\varphi(X^{(n_k)}(t,x))\Big]=\E\sq{\varphi(Z^{(n_k)}_x(t,0)+e^{tA}x)}\\
&\rightarrow \E\sq{\varphi(Z_x(t,0)+e^{tA}x)}=\E\big[\varphi(X(t,x))\big]=P(t)\varphi(x).
\end{align*}
So for every $x\in\X$, $h\in H_{\alpha}$ and $t\in (0,T]$,
\[\vert P(t)\varphi(x+h)-P(t)\varphi(x)\vert\leq \frac{e^{L_{F,\alpha}T}}{\sqrt{t}}\Vert\varphi\Vert_{\infty}\norm{h}_{\alpha}.\]
By the first step we conclude the proof.
\end{proof}

\begin{rmk}\label{x_dip}
We stress that the $H_\alpha$-Lipschitzianity of $F$ in Hypotheses \ref{hyp1} can be replaced by a weaker condition: for every $x\in\X$ there exists $L_{F,\alpha}(x)>0$ such that for every $x\in\X$ and $h\in H_\alpha$
\[\norm{F(x+h)-F(x)}_\alpha\leq L_{F,\alpha}(x)\norm{h}_\alpha.\]
Clearly, with this condition, whenever the costant $L_{F,\alpha}$ appears in the paper it has to be replaced with $L_{F,\alpha}(x)$. So the semigroup $P(t)$ does not map $B_b(\X)$ in $\lip_{b,H_\alpha}(\X)$, but for every $\varphi\in B_b(\X)$, we have that for every $t\in(0,T]$, $x\in\X$ and $h\in H_\alpha$,
\[\abs{P(t)\varphi(x+h)-P(t)\varphi(x)}\leq \frac{e^{L_{F,\alpha}(x)T}}{\sqrt{t}}\norm{\varphi}_{\infty}\norm{h}_\alpha.\]
\end{rmk}

\section{Proof of Theorem \ref{Hstrongvar}}\label{proofH}

This section is devoted to the proof of Theorem \ref{Hstrongvar}. First of all we stress that $F$ is Lipschitz continuos, since $Q^{-\alpha}F$ is Lipschitz continuous. Indeed, let $x,y\in\X$, we have
\begin{align*}
\norm{F(x)-F(y)}&=\norm{Q^\alpha Q^{-\alpha}(F(x)-F(y))}\\
&\leq\norm{Q^\alpha}_{\mathcal{L}(\X)}\norm{Q^{-\alpha}F(x)-Q^{-\alpha}F(y)}\leq \norm{Q^\alpha}_{\mathcal{L}(\X)}K_{F,\alpha}\norm{x-y},
\end{align*}
where $K_{F,\alpha}$ is the Lipschitz constant of $Q^{-\alpha}F$. We set $L_{F,\alpha}:=\|Q^\alpha\|_{\mathcal{L}(\X)}K_{F,\alpha}$.  We can, and do, assume $Q^{-\alpha}F\in C^1_b(\X;\X)$, the general case follows by standard approximation arguments as in the proof of Theorem \ref{Hstrong}.

We will show some preliminary results which will be useful. By Theorem \ref{smild}, the stochastic partial differential equation \eqref{eqF} has a unique mild solution $X(t,x)$. If $F\in C^1_b(\X;\X)$, by Theorem \ref{desmild}, the map $x\mapsto X(\cdot,x)$ from $\X$ to $\X^2([0,T])$ is Gateaux differentiable along any $k\in\X$ for every $p\geq 2$, and the process $Y(t,k)=\J^G X(t,x)k$ is the unique mild solution of 
\begin{gather}\label{cip-cop}
\eqsys{ dY(t,k)=(AY(t,k)+\J F(X(t,x))Y(t,k))dt, & t\in(0,T];\\
 Y(0,k)=k\in\X.}
\end{gather}
In the same way as in the proof of Theorem \ref{deZ}, using the contraction mapping theorem in the space $\X^2([0,T])$ and the Gronwall inequality, we obtain that $\J^G X(t,x)k$ belongs to $\X^2([0,T])$ and for every $t\in [0,T]$ and $x,k\in\X$,
\begin{equation}\label{stigroX}
\|\J^G X(t,x)k\|\leq e^{\int_0^t\|\D F( X(t,x))ds\|_{\mathcal{L}(\X)}}\norm{k}\leq e^{L_{F,\alpha}T}\norm{k}.
\end{equation} 
Now let us prove some results that will be useful in case $\alpha\in [0,1/2)$.
\begin{lemm}\label{abc}
Assume Hypotheses \ref{hyp0} hold true and let $F:\X\ra\X$ be such that $F(\X)\subseteq H_\alpha$, $Q^{-\alpha}F$ is Lipschitz continuous and $F\in C^1_b(\X;\X)$. If $X(t,x)$ is the mild solution of \eqref{eqF}, then the following hold true:
\begin{enumerate}[\rm (a)]
\item \label{a} if $\alpha\in [0,1/2)$, then $\J^G X(s,x)k$ belongs to $H_\alpha$;

\item for $\alpha\in[0,1/2)$, $k\in\X$ and $t\in[0,T]$ it holds
\begin{align}\label{c}
\int_0^t\Vert Q^{-\alpha}e^{(t-s)A}\J F(X(r,x)) \J^G X(r,x)k\Vert^2dr \leq  TL_{F,\alpha}^2 e^{2L_{F,\alpha}T} \|k\|^2.
\end{align}
\end{enumerate}
\end{lemm}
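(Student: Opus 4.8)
The plan is to prove the two statements essentially by unwinding the mild formulation of \eqref{cip-cop}, estimating in the $H_\alpha$-norm via the analyticity of $e^{tA}$, and then invoking \eqref{stigroX}.

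For part \eqref{a}, fix $\alpha\in[0,1/2)$, $x,k\in\X$ and $s>0$, and write the mild form of \eqref{cip-cop}:
\[
\J^G X(s,x)k = e^{sA}k + \int_0^s e^{(s-r)A}\J F(X(r,x))\J^G X(r,x)k\,dr.
\]
First I would note that $e^{sA}k\in H_\alpha$ for every $s>0$ by the analyticity of $e^{sA}$ and the identity $A=-(1/2)Q^{2\alpha-1}$ (as in Lemma \ref{acuqa}, using \cite[Proposition 2.1.1]{LUN1}), and that each integrand term $e^{(s-r)A}\J F(X(r,x))\J^G X(r,x)k$ lies in $H_\alpha$ for the same reason, with a bound of the form $C_\alpha (s-r)^{-2\alpha/(1-2\alpha)}$ times $\|\J F(X(r,x))\J^G X(r,x)k\|$ when measured in $\|Q^{-\alpha}\cdot\|$. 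Since $2\alpha/(1-2\alpha)<1$ precisely when $\alpha<1/4$, the singularity is integrable there; for $\alpha\in[1/4,1/2)$ one instead moves half (or a suitable fraction) of the fractional power onto $e^{(s-r)A/2}$, writing $Q^{-\alpha}e^{(s-r)A}=\bigl(Q^{-\alpha}e^{(s-r)A/2}\bigr)e^{(s-r)A/2}$ and using the contractivity of $e^{(s-r)A/2}$ on $\X$, which is the standard trick to keep the exponent below $1$; alternatively one splits the time integral and uses a bootstrap. In either case the Bochner integral converges in $H_\alpha$, so $\J^G X(s,x)k\in H_\alpha$.

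For part (b), I would estimate directly, using $\|Q^{-\alpha}e^{(t-s)A}B\|\le C_\alpha (t-s)^{-2\alpha/(1-2\alpha)}\|B\|$ together with $\|\J F\|_{\mathcal L(\X)}\le L_{F,\alpha}$ (from the Lipschitz continuity of $F$, as established at the start of this section) and $\|\J^G X(r,x)k\|\le e^{L_{F,\alpha}T}\|k\|$ from \eqref{stigroX}. This gives
\[
\|Q^{-\alpha}e^{(t-s)A}\J F(X(r,x))\J^G X(r,x)k\|^2 \le C_\alpha^2 (t-s)^{-4\alpha/(1-2\alpha)} L_{F,\alpha}^2 e^{2L_{F,\alpha}T}\|k\|^2,
\]
and integrating in $r$ (careful that the displayed bound in \eqref{c} has exponent $(t-s)$ only through the operator, so in fact the cleanest route is: the left-hand side of \eqref{c} integrates $\|\cdot\|^2$ over $r\in[0,t]$ with the operator norm of $Q^{-\alpha}e^{(t-s)A}$ absorbed — here one should read \eqref{c} with the intended pairing $(t-s)=(t-r)$ or a fixed $t-s$, and then the bound $TL_{F,\alpha}^2 e^{2L_{F,\alpha}T}\|k\|^2$ is exactly what the crude estimate $\|Q^{-\alpha}e^{(t-s)A}\|\le 1$ on $H_\alpha$-valued arguments plus \eqref{stigroX} yields after integrating the constant over $[0,t]$). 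The main obstacle I anticipate is organizing the fractional-power bookkeeping for $\alpha\in[1/4,1/2)$ so that all time singularities remain integrable — i.e. choosing the right splitting $Q^{-\alpha}e^{(t-s)A}=Q^{-\beta}e^{(t-s)A/2}\cdot Q^{-(\alpha-\beta)}e^{(t-s)A/2}$ with $\beta$ small enough — whereas for $\alpha\in[0,1/4)$ everything is immediate from Lemma \ref{acuqa}-type computations.
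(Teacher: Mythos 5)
There is a genuine gap, concentrated in part (b) but affecting (a) as well. You try to prove \eqref{c} by bounding $\|Q^{-\alpha}e^{(t-s)A}\|_{\mathcal{L}(\X)}$ by a singular function of time and then using $\|\J F\|_{\mathcal{L}(\X)}\le L_{F,\alpha}$. This cannot work for $\alpha\in[1/4,1/2)$: the squared singularity $(t-s)^{-2\alpha/(1-2\alpha)}$ is not integrable there, and the splitting $Q^{-\alpha}e^{tA}=Q^{-\beta}e^{tA/2}\cdot Q^{-(\alpha-\beta)}e^{tA/2}$ that you propose gains nothing, because the two singular factors multiply back to a bound of the same order $t^{-\alpha/(1-2\alpha)}$: the singularity is governed by the total fractional power $\alpha$, which is conserved under any such splitting, so no choice of $\beta$ helps. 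Even in the range $\alpha\in[0,1/4)$ where the singularity is integrable, this route produces a constant involving $\int_0^t(t-s)^{-2\alpha/(1-2\alpha)}\,ds$, not the clean factor $T$ appearing in \eqref{c}.

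The idea you are missing --- and it is the whole point of assuming that $Q^{-\alpha}F$ is Lipschitz rather than merely that $F$ is Lipschitz --- is that $Q^{-\alpha}$ must be absorbed by $\J F$, not by the semigroup. Since $Q^{-\alpha}F$ is Lipschitz with constant $K_{F,\alpha}$ and $F\in C^1_b(\X;\X)$, the operator $Q^{-\alpha}\J F(y)$ is bounded on $\X$ uniformly in $y$; and since $A=-(1/2)Q^{2\alpha-1}$ is a function of $Q$, the operator $Q^{-\alpha}$ commutes with $e^{(t-s)A}$. Hence
\[
\norm{Q^{-\alpha}e^{(t-s)A}\J F(X(r,x))\J^G X(r,x)k}=\norm{e^{(t-s)A}\,Q^{-\alpha}\J F(X(r,x))\J^G X(r,x)k}\le K_{F,\alpha}\,e^{L_{F,\alpha}T}\norm{k},
\]
by the contractivity of $e^{(t-s)A}$ on $\X$ and \eqref{stigroX}; integrating this constant over $r\in[0,t]$ gives \eqref{c} with no time singularity whatsoever. (Your closing parenthetical, ``$\|Q^{-\alpha}e^{(t-s)A}\|\le1$ on $H_\alpha$-valued arguments,'' gestures at exactly this, but to use it you must first show that the argument lies in $H_\alpha$ with controlled $\alpha$-norm, and that is precisely the uniform boundedness of $Q^{-\alpha}\J F$, which you never invoke.) The same observation disposes of your worries in part (a): in the mild formula the integrand is $e^{(s-r)A}$ applied to an element of $H_\alpha$ whose $\alpha$-norm is bounded uniformly in $r$, so the Bochner integral converges in $H_\alpha$ with no discussion of integrable singularities; analyticity (i.e.\ $e^{sA}(\X)\subseteq Q^{\beta}(\X)$ for $\alpha<1/2$, which is all the paper uses) is needed only for the free term $e^{sA}k$.
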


\begin{proof}
We remark that by Theorem \ref{desmild}, the process $\{\J^G X(t,x)k\}_{t\in[0,T]}$ is well defined and it is the mild solution of \eqref{cip-cop}. This means that 
\begin{align}\label{penna}
\J^G X(t,x)k=e^{tA}k+\int^t_0e^{(t-s)A}\J F(X(s,x))(\J^G X(t,x)k)ds.
\end{align}
We start by proving \eqref{a}. By \cite[Proposition 2.1.1]{LUN1}, if $\alpha\in [0,1/2)$ then for any $t>0$ and $\beta\geq 0$
\[e^{tA}(\X)\subseteq Q^{\beta}(\X).\]
By \eqref{penna}, $\J^G X(s,x)k$ belongs to $H_\alpha$, for every $s\in (0,T]$ and $x,k\in\X$. Observe that \eqref{c} is a consequence of the Lipschitzianity of $Q^{-\alpha}F$ and \eqref{stigroX}.
\end{proof}
The main idea behind the proof of Theorem \ref{Hstrongvar} is to obtain an estimate for $\|\D P_t\varphi(x)\|$, for $\alpha\in [0,1/2)$, and for $\|\D_{1/2} P_t\varphi(x)\|$, for $\alpha=1/2$, independent of $x$. When such an estimate is found, then we proceed as in the proof of Theorem \ref{Hstrong}. When $\alpha\in [0,1/2)$ there is a difference between the case $\alpha\in [0,1/4)$ and the case $\alpha\in [1/4,1/2)$. In the first case we can use \eqref{raaaa} which allows us to get a sharper gradient estimate. Instead, for the second case, we are forced to use other results. We now split the proof of Theorem \ref{Hstrongvar} in three cases.

For $\alpha\in [0,1/4)$, as we have mentioned in Section \ref{introduction}, we present a simpler proof than the one in \cite{BON-FUR1,FUR1}, that exploits the identity $A=-(1/2)Q^{2\alpha-1}$ and the analyticity of the semigroup $e^{tA}$.

\begin{proof}[of Theorem \ref{Hstrongvar} for $\alpha\in [0,1/4)$]
First of all we prove a preliminar result. Recalling that $Q^{-\alpha}=A^{\alpha/(1-2\alpha)}$, then by \cite[Formula (2.1.2)]{LUN1} for $\alpha\in[0,1/4)$, there exists $C_\alpha>0$ such that for any $k\in\X$ and $t\in[0,T]$ it holds
\begin{align}\label{b}
\int_0^t\|Q^{-\alpha}e^{sA}k\|^2ds\leq C_\alpha T^{(1-4\alpha)/(1-2\alpha)}\frac{2\alpha-1}{4\alpha-1}\|k\|^2.
\end{align}
Now we proceed in the same way as in the proof of Proposition \ref{BEL}. By Lemma \ref{abc}\eqref{a}, 
\begin{equation}\label{ciaciacia}
\int_0^t\langle Q^{-\alpha}\J^G X(s,x)k, dW(s)\rangle
\end{equation}
is well defined. By \eqref{b}-\eqref{c} we have
\[ 
\int^t_0 \E \big[\Vert Q^{-\alpha}\J^G X(s,x)k\Vert^2\big]ds<+\infty,
\]
and so, by \cite[Remark 2]{EL-LI1}, \eqref{ciaciacia} is a martingale. Multiplying both sides of  \eqref{raaaa} by \eqref{ciaciacia}, and using the same arguments used in the proof of Theorem \ref{Hstrong} we obtain, for any $k\in H_\alpha$,
\[
\langle \D P(t)\varphi(x),k\rangle=\frac{1}{t}\mathbb{E}\left[\varphi(X(t,x))\int_0^t\langle Q^{-\alpha}\J^G X(s,x)k, dW(s)\rangle\right],
\]
and by the It\^o isometry
\begin{align*}
|\langle \D P(t)\varphi(x), k\rangle|^2&\leq\frac{1}{t^2}\Vert \varphi\Vert^2_\infty\mathbb{E}\left[\int_0^t\Vert Q^{-\alpha}\J^G X(s,x)k\Vert^2 ds\right],
\end{align*}
and so by \eqref{b}-\eqref{c}
\begin{align}\label{kuakua}
\Vert \D P(t)\varphi(x)\Vert\leq  \frac{C}{\sqrt t}\Vert \varphi\Vert_\infty,
\end{align}
where $C^2=2\max \{ C_\alpha T^{(1-4\alpha)/(1-2\alpha)}\frac{2\alpha-1}{4\alpha-1}, TL_{F,\alpha}^2e^{2L_{F,\alpha}T} \}$. With the aid of \eqref{kuakua} we conclude using arguments similar to those of the proof of Theorem \ref{Hstrong}.
\end{proof}

If $\alpha\in [1/4,1/2)$, then \eqref{b} is not verified, so we have to obtain an analogous of \eqref{kuakua} in another way. However, we cannot get this estimate from \eqref{raaaa}. So we need the same results used in \cite{BON-FUR1,FUR1}. We will give just give a hint of the proof.


\begin{proof}[of Theorem \ref{Hstrongvar} for $\alpha\in [1/4,1/2)$]
In view of Hypotheses \ref{hyp0} and Lemma \ref{abc}\eqref{a}, by \cite[Proposition 6]{BON-FUR1} and the chain rule, we have
\begin{align}
\scal{\D P(t)\varphi}{k}&=\E\sq{\scal{\D\varphi(X(t,x))}{\J^G X(t,x)k}}\notag\\
&=\E\sq{\varphi(X(t,x))\scal{\int^t_0e^{(t-s)A}Q
^\alpha dW(s)}{Q_t^{-2\alpha}\J^G X(t,x)k}}\notag\\
&\phantom{aa} -\E\sq{\varphi(X(t,x))\int^t_0 \scal{\J F(X(s,x))e^{(t-s)A}\J^G X(t,x)k}{dW(s)} },\label{varbonbon}
\end{align} 
where $Q_t=Q(\Id-e^{2tA})$. We remark that $e^{tA}(\X)\subseteq Q_t^{1/2}(\X)\subseteq Q_t^\alpha(\X)$ for every $t\in(0,T]$. Recalling that by Hypothesis \ref{hyp0}\eqref{hyp0.3} $\int^t_0\Tr[e^{2(t-s)A}Q^{2\alpha}]ds<+\infty$, by \eqref{stigroX}, Lemma \ref{abc}\eqref{a} and \eqref{varbonbon} we get that there exists $C(t,F)>0$ such that for every $x\in\X$
\begin{align}\label{kuku}
\Vert \D P(t)\varphi(x)\Vert\leq  C(t,F)\Vert \varphi\Vert_\infty.
\end{align}
Using \eqref{kuku} we conclude the proof in a same way as in the case $\alpha\in [0,1/4)$.
\end{proof}

\begin{rmk}
We note that in the case $\alpha\in[1/4,1/2)$ it is not possible to obtain an explicit estimate as the one in \eqref{HlipC}. Indeed even in \cite[Theorem 8]{BON-FUR1} the dependence on $t$ of the constant $C(t,F)$ is implicit.
\end{rmk}

We just need to show Theorem \ref{Hstrongvar} in the case $\alpha=1/2$.

\begin{proof}[of Theorem \ref{Hstrongvar} for $\alpha=1/2$]
In the same way as in the proof of Proposition \ref{BEL}, multiplying both sides of  \eqref{raaaa} by 
\[
\int_0^t\langle\J^G X(s,x)h, Q^{1/2}dW(s)\rangle,
\]
we obtain, for any $h\in H_\alpha$,
\[
\langle Q^{1/2}\D P(t)\varphi(x),h\rangle=\frac{1}{t}\mathbb{E}\left[\varphi(X(t,x))\int_0^t\langle Q^{1/2}\J^G X(s,x)h, dW(s)\rangle\right],
\]and 
\begin{align*}
|\langle Q^{1/2}\D P(t)\varphi(x), h\rangle|^2&\leq\frac{1}{t^2}\Vert \varphi\Vert^2_\infty\mathbb{E}\left[\int_0^t\Vert Q^{1/2}\J^G X(s,x)h\Vert^2 ds\right]\notag\\
&\leq \frac{1}{t^2}\Vert \varphi\Vert^2_\infty\|Q^{1/2}\|^2_{\mathcal{L}(\X)}\mathbb{E}\left[\int_0^t\Vert\J^G X(s,x)h\Vert^2 ds\right].
\end{align*}
By Proposition \ref{dalpha} we have 
\begin{equation}\label{tre}
\Vert Q^{1/2}\D P(t)\varphi(x)\Vert=\Vert Q\D P(t)\varphi(x)\Vert_{1/2}=\Vert \D_{1/2} P(t)\varphi(x)\Vert_{1/2}
\end{equation}
and so by \eqref{stigroX}-\eqref{tre} we obtain 
\begin{align}\label{stimagradlip}
\Vert \D_{1/2} P(t)\varphi(x)\Vert_{1/2}\leq  \frac{e^{L_FT}}{\sqrt t}\|Q^{1/2}\|_{\mathcal{L}(\X)}\Vert \varphi\Vert_\infty.
\end{align}
With the aid of \eqref{stimagradlip} we conclude using arguments similar to those of the proof of Theorem \ref{Hstrong}.
\end{proof}

\section{Comparisons with some results in the literature}\label{FUR}

In this section we will relate the results of this paper to those already known in the literature. We stress that the commutation between $A$ and $Q$ helps us to simplify some calculations appearing in this paper.

\subsection{Comparisons with \cite{BON-FUR1,FUR1}}\label{Cfurmi} In \cite{BON-FUR1} and \cite{FUR1} the transition semigroup $P(t)$ of the stochastic partial differential equation
\begin{gather}\label{eqFU}
\eqsys{
dX(t,x)=\big(AX(t,x)+RF(X(t,x))\big)dt+ RdW(t), & t\in(0,T];\\
X(0,x)=x\in \X,
}
\end{gather}
is studied and it is shown that 
\begin{equation}\label{Str}
P(t)\left(B_b(\X)\right)\subseteq \lip_b(\X),\qquad \forall t\in(0,T]
\end{equation} 
under the following hypotheses.

\begin{hyp}\label{hypF}
$ $
\begin{enumerate}[\rm(i)]

\item $A: D(A)\subseteq \X\ra\X$ is the infinitesimal generator of a strongly continuous semigroup $e^{tA}$;

\item $R$ is a linear and continuous operator on $\X$, and
\[
Q_tx:=\int_0^te^{sA}RR^*e^{sA^*}xds,
\]
are trace class operator, for any $t\in [0,T]$;

\item for every $t\in(0,T]$, the semigroup $e^{tA}$ is a Hilbert--Schmidt operator and there exists $k>0$ such that
\begin{align*}
\int_0^ts^{-k}\Tr[e^{sA}RR^*e^{sA^*}]ds<+\infty;
\end{align*}

\item\label{hypFO} for $t\in(0,T]$ the range of $e^{tA}$ is contained in the range of $Q^{1/2}_t$;

\item\label{hypFO1} $F:\X\mapsto\X$ is Fr\'echet differentiable function with bounded gradient.
\end{enumerate}
\end{hyp}
To prove \eqref{Str}, in \cite{BON-FUR1} and \cite{FUR1} (and in many other papers, see for example \cite{CER1,GOZ1,MAS1}) the authors use the Girsanov theorem to make a change of variable in order to exploit the regularity results of the transition semigroup $T(t)$ associated to \eqref{eqFU} with $F=0$. Indeed we recall that, for any $t>0$, we have
\begin{equation}\label{strongOOU}
T(t)\left(B_b(\X)\right)\subseteq \lip_b(\X).
\end{equation} 
Hypothesis \ref{hypF}\eqref{hypFO} is needed to guarantee \eqref{strongOOU} (see, for example \cite[Section 8.3.1]{CER1}, \cite[Section 10.3]{DA-ZA1} and \cite{GOZ1}).
Clearly the hypotheses on $F$ of Theorem \ref{Hstrong} are significantly different from Hypothesis \ref{hypF}\eqref{hypFO1} and consequently also the results on the transition semigroup $P(t)$ that are obtained are different. Instead, for $\alpha\in [0,1/2)$ the hypotheses of Theorem \ref{Hstrongvar} are covered by Hypotheses \ref{hypF}. Indeed it is enough to set
 \[
R=Q^{\alpha},\qquad   A=-\frac{1}{2}Q^{2\alpha-1}.
\]
and to recall that, by \cite[Proposition 2.1.1(i)]{LUN1}, for any $\beta\geq 0$ and $t>0$, we have
\[
e^{tA}(\X)\subseteq Q^{\beta(1-2\alpha)}(\X).
\]
Of course, in this paper the relation $A=-(1/2)Q^{2\alpha-1}$ simplifies the calculus. However, our approach is different since we do not use the Girsanov theorem.
Finally, as we just said in Section \ref{introduction}, the case $\alpha=1/2$ is not covered by the Hypotheses \ref{hypF}. In particular Hypothesis \ref{hypF}\eqref{hypFO} is not verified, since $A=-(1/2)\Id$. This lack of regularity is not somethig related to the function $F$. Indeed, it is known that the transition semigroup $M(t)$ associated to 
\begin{gather*}
\eqsys{
dY(t,x)=-\frac{1}{2}Y(t,x)dt+ Q^{1/2}dW(t), & t\in(0,T];\\
Y(0,x)=x\in \X,
}
\end{gather*}
is an Ornstein--Uhlenbeck semigroup defined by the Mehler formula
\[
M(t)f(x):=\int_X f(e^{-t}x+\sqrt{1-e^{-2t}}y)\gamma(dy),\qquad x\in\X,\ f\in B_b(\X);
\]
where $\gamma$ is the Gaussian measure on $\X$ with mean zero and covariance operator $Q$ and it regularizes only along $Q^{1/2}(\X)$ (see for example \cite[Proposition 2.3]{CER-LUN1}). Hence with $\alpha=1/2$ we cannot hope to achive a result similar to \eqref{Str}.

\subsection{Comparisons with \cite{FE-GO2,FE-GO1,MAS2,MAS1}}\label{Cmasi} 
In \cite{FE-GO2,FE-GO1,MAS2,MAS1} the authors work in a very general setting: $Q$ and $A$ are not linked by any relationship and $\X$ is a separable Banach space with a Schauder basis. They define the following differential operator.
\begin{defi}\label{G-der}
 Let $f:\X\ra\R$ be a continuous function, the $Q^\alpha$-directional derivative $\nabla^{Q^\alpha}f(x;y)$ at a point $x\in\X$ in the direction $y\in\X$ is defined as:
\begin{align*}
\nabla^{Q^\alpha}f(x;y):=\lim_{s\ra 0}\frac{f(x+sQ^\alpha y)-f(x)}{s},
\end{align*}
provided that the limit exists and the map $y\mapsto \nabla^{Q^\alpha}f(x;y)$ belongs to $\X^*$.
\end{defi}
\noindent Furthermore the authors of \cite{FE-GO2,FE-GO1,MAS2,MAS1} assume the following.

\begin{hyp}\label{hyp_mas}
Let $f:\X\ra \X$ be a continuous function  such that $A+f-\eta$ is dissipative on $\X$ for some $\eta\in\R$ and there exists $k\geq 0$ such that $\|f(x)\|\leq c(1+\|x\|^k)$ for some positive constant $c$. Moreover assume that $f(x)\in Q^\alpha(\X)$ and let $F(x)=Q^{-\alpha}f(x)$. Finally assume that $F:\X\ra\X$ is a continuous and Gateaux differentiable function with continuous directional derivatives, and there exists $j\geq0$ such that for every $x,y\in\X$
\begin{align*}
\|F(x)\|\leq c(1+\|x\|^j),\qquad \|(\J^G F(x))y\|\leq c(1+\|x\|^j)\|y\|;
\end{align*}
for some positive constant $c$.
\end{hyp}

Using Hypotheses \ref{hyp_mas} (and hypotheses on $Q$ similar to Hypotheses \ref{hypF}) the authors of \cite{FE-GO2,FE-GO1,MAS2,MAS1} prove that, for every $\varphi\in B_b(\X)$, the function $P(t)\varphi$ admits $Q^\alpha$-directional derivatives in every direction $y\in\X$.  We stress that if $f$ is differentiable along $H_\alpha$, then its $Q^\alpha$-directional derivatives exists and
\[\nabla^{Q^\alpha}f(x;y)=\langle Q^{-\alpha}\D_\alpha f(x),y\rangle.\]
Instead if $f$ admits $Q^\alpha$-directional derivatives, it may be not differentiable along $H_\alpha$. We remark that the derivatives operator defined in Definition \ref{derivataaaaa} is a sort of Fr\'echet derivative along $H_\alpha$, while \eqref{G-der} are Gateaux derivatives along the direction of $H_\alpha$. Finally we stress that in this paper we obtain a Lipschitzianity result (see Theorem \ref{Hstrong} and Theorem \ref{Hstrongvar}), instead, in \cite{FE-GO2,FE-GO1,MAS2,MAS1}, the authors cannot achive a similar result, with Definition \ref{G-der}.

\section{Examples}\label{exA}

In this section we will give some examples to which the results of this paper can be applied.

\subsection{An example for $\alpha\in [0,1/2)$}\label{projection_ex} Let $\alpha\in [0,1/2)$ and let $\Pi\in\mathcal{L}(H_\alpha)$. Let $\beta\geq \alpha$ and consider the map $F:\X\ra\X$ defined as
\begin{align}\label{proj}
F(x):=\eqsys{
\Pi(Q^\beta x), & x\in H_\alpha;\\
0, & x\in \X\setminus H_\alpha.
}
\end{align}
We claim that $F$ satisfies Hypoteses \ref{hyp1}. Indeed, since $F_{|_{H_\alpha}}$ is continuous, then recalling Proposition \ref{propHalpha}\eqref{Halpha6} we obtain that $F$ is Borel measurable. If $x,h\in H_\alpha$ then
\begin{align*}
\|F(x+h)-F(x)\|_\alpha &=\|\Pi(Q^\beta x+Q^\beta h)-\Pi(Q^\beta x)\|_\alpha\leq  \|\Pi\|_{\mathcal{L}(H_\alpha)} \|Q^\beta\|_{\mathcal{L}(H_\alpha)}\norm{h}_\alpha.
\end{align*}
While if $x\in\X\setminus H_\alpha$ and $h\in H_\alpha$, recalling that $x+h\in\X\setminus H_\alpha$, we get 
\begin{align*}
\|F(x+h)-F(x)\|_\alpha &=0.
\end{align*}
So $F$ is $H_\alpha$-Lipschitz. Now if $t\in(0,T]$, $x\in\X$ and $h\in H_\alpha$ since $\alpha\in [0,1/2)$ we know that $e^{tA}x$ belongs to $H_\alpha$ and by Proposition \ref{propHalpha}\eqref{Halpha4} so
\begin{align*}
\|F(e^{tA}x+h)\|^2_\alpha &=\|\Pi(Q^\beta (e^{tA}x)+Q^{\beta}h)\|^2_\alpha\\
&=\|\Pi(e^{tA}(Q^\beta x)+Q^{\beta}h)\|^2_\alpha\\
&\leq  \|\Pi\|_{\mathcal{L}(H_\alpha)} \|e^{tA}(Q^\beta x)+Q^{\beta}h\|^2_\alpha\\
&\leq 2 \|\Pi\|_{\mathcal{L}(H_\alpha)}\|e^{tA}(Q^\beta x)\|^2_\alpha+2 \|\Pi\|_{\mathcal{L}(H_\alpha)}\|Q^{\beta}h\|^2_\alpha\\
&\leq 2 \|\Pi\|_{\mathcal{L}(H_\alpha)}\|Q^\beta x\|^2_\alpha+2 \|\Pi\|_{\mathcal{L}(H_\alpha)}\|Q^{\beta}h\|^2_\alpha\\
&\leq 2 \|\Pi\|_{\mathcal{L}(H_\alpha)}\|Q^\beta x\|^2_\alpha+2 \|\Pi\|_{\mathcal{L}(H_\alpha)}\|Q^\beta\|_{\mathcal{L}(H_\alpha)}\|h\|^2_\alpha\\
&\leq 2 \|\Pi\|_{\mathcal{L}(H_\alpha)}\max\set{\|Q^\beta x\|^2_\alpha,\|Q^\beta\|_{\mathcal{L}(H_\alpha)}}(1+\|h\|^2_\alpha).
\end{align*}
This concludes the proof of our claim. So if we assume Hypotheses \ref{hyp0} then for every $x\in\X$, by Theorem \ref{Hsmild}, the stochastic partial differential equation 
\begin{gather*}
\eqsys{
dX(t,x)=\big(AX(t,x)+F(X(t,x))\big)dt+ Q^{\alpha}dW(t), & t\in(0,T];\\
X(0,x)=x\in \X,
}
\end{gather*}
has a unique mild solution $X(t,x)$ in $\X^2([0,T])$. In particular, applying Theorem \ref{Hstrong}, the transition semigroup $P(t)\varphi(x)=\E[\varphi(X(t,x))]$, defined for $\varphi\in B_b(\X)$, maps the space $B_b(\X)$ in the space $\lip_{b,H_\alpha}(\X)$ for every $t\in(0,T]$. We stress that the function $F$ defined in \eqref{proj}, is not continuous on $\X$ so the classical theory of stochastic partial differential equation cannot be used.

\subsection{An example for $\alpha=1/2$}\label{ex_1/2} Consider the space $\X=L^2([0,1],d\xi)$ where $d\xi$ denotes the Lebesgue measure on $[0,1]$ and let $Q:L^2([0,1],d\xi)\ra L^2([0,1],d\xi)$ be the positive and self-adjoint operator defined as
\[Q f(\xi)=\int_0^1 \max\set{\xi,\eta}f(\eta)d\eta.\]
We emphasize that we have assumed as $Q$ the covariance operator of the Wiener measure on $L^2([0,1],d\xi)$, but we could consider any $Q$ such that $Q(L^2([0,1],d\xi))\subseteq W^{1,2}_0([0,1],d\xi)$, where $W^{1,2}_0([0,1],d\xi)$ is the set of the real-valued functions $f$ defined on $[0,1]$ such that $f$ is absolutely continuous, $f'\in L^2([0,1],d\xi)$ and $f(0)=0$.
If $\alpha=1/2$, it is known that Hypotheses \ref{hyp0} hold true and $H_{1/2}$ is the space $W^{1,2}_0([0,1],d\xi)$. Moreover the norm $\norm{\cdot}_{1/2}$ is equivalent to the norm
\[\norm{f}_{W^{1,2}_0([0,1],d\xi)}:=\|f'\|_{L^2([0,1],d\xi)}.\]
For all these results see \cite[Remark 2.3.13 and Lemma 2.3.14]{BOGIE1}. 

Let $T>0$ and let $g:[0,1]\ra [0,1]$ be a non-decreasing and  Lipschitz continuous function with Lipschitz constant $L_g$. Consider the stochastic partial differential equation
\begin{gather}\label{ex2}
\eqsys{
dX(t,f)=\pa{-\frac{1}{2}X(t,f)+F(X(t,f))}dt+ Q^{1/{2}}dW(t), & t\in(0,T];\\
X(0,f)=f\in L^2([0,1],d\xi),
}
\end{gather}
where $F:L^2([0,1],d\xi)\ra L^2([0,1],d\xi)$ is defined as
\begin{align*}
F(f):=\eqsys{
f\circ g-f(g(0)), & f\in W^{1,2}_0([0,1],d\xi);\\
0, & \text{otherwise}.
}
\end{align*}
We claim that $F$ satisfies Hypotheses \ref{hyp1}. Indeed by \cite[Proposition 129]{PAP1} the function $f\circ g-f(g(0))$ is absolutely continuous and it maps zero to itself. Moreover
\begin{align*}
\|f\circ g-f(g(0))\|^2_{W^{1,2}_0([0,1],d\xi)}&=\int_0^1|(f\circ g)'(\xi)|^2 d\xi\\
&=\int_0^1|f'(g(\xi))g'(\xi)|^2 d\xi\\
&\leq L_g\int_0^1|f'(g(\xi))|^2 |g'(\xi)| d\xi\\
&=L_g\int_0^1|f'(\eta)|^2 d\eta<+\infty.
\end{align*} 
So $F(L^2([0,1],d\xi))$ is contained in $W_0^{1,2}([0,1],d\xi)$. If $f,h\in W^{1,2}_0([0,1],d\xi)$ we get
\begin{align*}
\|F(f+h)-F(f)\|^2_{W^{1,2}_0([0,1],d\xi)} &=\int_0^1|((f+h)\circ g)'(\xi)-(f\circ g)'(\xi)|^2d\xi\\
&=\int_0^1|f'(g(\xi))g'(\xi)+h'(g(\xi))g'(\xi)-f'(g(\xi))g'(\xi)|^2 d\xi\\
&=\int_0^1|h'(g(\xi))g'(\xi)|^2 d\xi\\
&\leq L_g\int_0^1|h'(g(\xi))|^2|g'(\xi)| d\xi\\
&=L_g\int_0^1|h'(\eta)|^2 d\eta=L_g\|h\|^2_{W^{1,2}_0([0,1],d\xi)}.
\end{align*}
While if $f\in L^2([0,1],d\xi)\setminus W^{1,2}_0([0,1],d\xi)$ and $h\in W^{1,2}_0([0,1],d\xi)$ we get 
\[\|F(f+h)-F(f)\|^2_{W^{1,2}_0([0,1],d\xi)}=0.\] 
Then, $F$ is $W^{1,2}_0([0,1],d\xi)$-Lipschitz. Now let $t\in [0,T]$, $f\in L^2([0,1],d\xi)$ and $h\in W^{1,2}_0([0,1],d\xi)$. If $f$ belongs to $L^2([0,1],d\xi)\setminus W^{1,2}_0([0,1],d\xi)$ then 
\begin{align*}
\|F(e^{-t/2}f+h)\|^2_{W^{1,2}_0([0,1],d\xi)}=0,
\end{align*}
while if $f\in W^{1,2}_0([0,1],d\xi)$ then
\begin{align*}
\|F(e^{-t/2}f+h)\|^2_{W^{1,2}_0([0,1],d\xi)} &=\int_0^1|((e^{-t/2}f+h)\circ g)'(\xi)|^2d\xi\\
&=\int_0^1|(e^{-t/2}f'(g(\xi))+h'(g(\xi)))g'(\xi)|^2d\xi\\
&\leq L_g\int_0^1|(e^{-t/2}f'(g(\xi))+h'(g(\xi)))|^2|g'(\xi)| d\xi\\
&=L_g\int_0^1|e^{-t/2}f'(\eta)+h'(\eta)|^2d\eta\\
&\leq 2L_g\int_0^1|e^{-t/2}f'(\eta)|^2d\eta+2L_g\int_0^1|h'(\eta)|^2d\eta\\
&\leq 2L_g\|f\|^2_{W^{1,2}_0([0,1],d\xi)}+2L_g\|h\|^2_{W^{1,2}_0([0,1],d\xi)}\\
&\leq 2L_g\max\set{1,\|f\|^2_{W^{1,2}_0([0,1],d\xi)}}\pa{1+\|h\|^2_{W^{1,2}_0([0,1],d\xi)}}
\end{align*}
Finally using the same arguments as in Section \ref{projection_ex} we obtain that $F$ is Borel measurable. So $F$ satisfies Hypotheses \ref{hyp1}. 

For every  $x\in\X$ and $T>0$, by Theorem \ref{Hsmild}, the stochastic partial differential equation \eqref{ex2} has a unique mild solution $X(t,x)$ in $\X^2([0,T])$. In particular, applying Theorem \ref{Hstrong}, the transition semigroup $P(t)\varphi(x)=\E[\varphi(X(t,x))]$, defined for $\varphi\in B_b(L^2([0,1],d\xi))$, maps the space $B_b(L^2([0,1],d\xi))$ in the space $\lip_{b,{W^{1,2}_0([0,1],d\xi)}}(L^2([0,1],d\xi))$ for every $t\in(0,T]$. We stress that the function $F$ defined in \eqref{proj} is not continuous on $\X$ so the classical theory of stochastic partial differential equations cannot be used. Furthermore the results of \cite{BON-FUR1} and \cite{FUR1} cannot be used since $\alpha=1/2$ as we already remarked in Section \ref{FUR}.

\subsection{An example for Remark \ref{x_dip}} We consider the same setting of Section \ref{ex_1/2}. Let $Y$ be the set of absolutely continuous functions $f:[0,1]\ra\R$ such that $f'$ is bounded and $f(0)=0$. Let $T>0$ and let $g:[0,1]\ra \R$ be a Lipschitz continuous function with Lipschitz continuous derivative. We denote by $L_g$ and $L_{g'}$ the Lipschitz constants of $g$ and $g'$, respectively. Consider the stochastic partial differential equation
\begin{gather}\label{ex3}
\eqsys{
dX(t,f)=\pa{-\frac{1}{2}X(t,f)+F(X(t,f))}dt+ Q^{1/{2}}dW(t), & t\in(0,T];\\
X(0,f)=f\in L^2([0,1],d\xi),
}
\end{gather}
where $F:L^2([0,1],d\xi)\ra L^2([0,1],d\xi)$ is defined as
\begin{align*}
F(f):=\eqsys{
g\circ f-g(f(0)), & f\in Y;\\
0, & \text{otherwise}.
}
\end{align*}
We claim that $F$ satisfies the conditions of Remark \ref{x_dip}. Indeed by \cite[Exercise 17 of Section 5.4]{ROY1} the function $g\circ f-g(f(0))$ is absolutely continuous and it maps zero to itself. Moreover
\begin{align*}
\|g\circ f-g(f(0))\|^2_{W^{1,2}_0([0,1],d\xi)}&=\int_0^1|(g\circ f)'(\xi)|^2 d\xi\\
&=\int_0^1|g'(f(\xi))f'(\xi)|^2 d\xi\\
&\leq L_g^2\int_0^1|f'(\eta)|^2 d\eta=L_g^2\|f\|^2_{W^{1,2}_0([0,1],d\xi)}.
\end{align*} 
So $F(L^2([0,1],d\xi))$ is contained in $W_0^{1,2}([0,1],d\xi)$. If $f\in Y$ and $h\in W^{1,2}_0([0,1],d\xi)$ we get
\begin{align*}
\|F(f+h)-F(f)\|^2_{W^{1,2}_0([0,1],d\xi)} &=\int_0^1|(g\circ (f+h))'(\xi)-(g\circ f)'(\xi)|^2d\xi\\
&=\int_0^1|g'(f(\xi)+h(\xi))(f'(\xi)+h'(\xi))-g'(f(\xi))f'(\xi)|^2 d\xi\\
&\leq 2\int_0^1|g'(f(\xi)+h(\xi))(f'(\xi)+h'(\xi))-g'(f(\xi)+h(\xi))f'(\xi)|^2 d\xi\\
&\phantom{aaaaaaaaaaaaa1}+2\int_0^1|(g'(f(\xi)+h(\xi))-g'(f(\xi)))f'(\xi)|^2 d\xi\\
&\leq 2L_g^2\int_0^1|h'(\xi)|^2d\xi+2L_{g'}^2\int_0^1|h(\xi)f'(\xi)|^2d\xi\\
&\leq 2L_g^2\|h\|^2_{W^{1,2}_0([0,1],d\xi)}+2L_{g'}^2\|f'\|_\infty^2\int_0^1|h(\xi)|^2d\xi\\
&\leq 2L_g^2\|h\|^2_{W^{1,2}_0([0,1],d\xi)}+2L_{g'}^2\|f'\|_\infty^2\int_0^1\abs{\int_0^\xi h'(\eta)d\eta}^2d\xi\\
&\leq 2L_g^2\|h\|^2_{W^{1,2}_0([0,1],d\xi)}+2L_{g'}^2\|f'\|_\infty^2\int_0^1\xi\int_0^\xi |h'(\eta)|^2d\eta d\xi\\
&\leq 2L_g^2\|h\|^2_{W^{1,2}_0([0,1],d\xi)}+2L_{g'}^2\|f'\|_\infty^2\|h\|^2_{W^{1,2}_0([0,1],d\xi)}\\
&\leq 2\max\set{L_g^2,L_{g'}^2\|f'\|_\infty^2}\|h\|^2_{W^{1,2}_0([0,1],d\xi)}
\end{align*}
While if $f\in L^2([0,1],d\xi)\setminus Y$ and $h\in W^{1,2}_0([0,1],d\xi)$ we get 
\[\|F(f+h)-F(f)\|^2_{W^{1,2}_0([0,1],d\xi)}=0.\] 
These imply that $F$ is $W^{1,2}_0([0,1],d\xi)$-Lipschitz. Now let $t\in[0,T]$, $f\in L^2([0,1],d\xi)$ and $h\in W^{1,2}_0([0,1],d\xi)$. If $f$ belongs to $L^2([0,1],d\xi)\setminus W^{1,2}_0([0,1],d\xi)$ then 
\begin{align*}
\|F(e^{-t/2}f+h)\|^2_{W^{1,2}_0([0,1],d\xi)}=0.
\end{align*}
While if $f\in W^{1,2}_0([0,1],d\xi)$ then
\begin{align*}
\|F(e^{-t/2}f+h)\|^2_{W^{1,2}_0([0,1],d\xi)} &=\int_0^1|( g\circ (e^{-t/2}f+h))'(\xi)|^2d\xi\\
&=\int_0^1|g'(e^{-t/2}f(\xi)+h(\xi))(e^{-t/2}f'(\xi)+h'(\xi))|^2d\xi\\
&\leq L_g^2\int_0^1|e^{-t/2}f'(\eta)+h'(\eta)|^2d\eta\\
&\leq 2L_g^2\int_0^1|e^{-t/2}f'(\eta)|^2d\eta+2L_g^2\int_0^1|h'(\eta)|^2d\eta\\
&\leq 2L_g^2\|f\|^2_{W^{1,2}_0([0,1],d\xi)}+2L_g^2\|h\|^2_{W^{1,2}_0([0,1],d\xi)}\\
&\leq 2L_g^2\max\set{1,\|f\|^2_{W^{1,2}_0([0,1],d\xi)}}\pa{1+\|h\|^2_{W^{1,2}_0([0,1],d\xi)}}
\end{align*}
Finally using the same arguments as in Section \ref{projection_ex} we obtain that $F$ is Borel measurable. So $F$ satisfies the conditions of Remark \ref{x_dip}. 

For every $x\in\X$ and $T>0$, by Theorem \ref{Hsmild}, the stochastic partial differential equation \eqref{ex3} has a unique mild solution $X(t,x)$ in $\X^2([0,T])$. In particular, by Remark \ref{x_dip}, the transition semigroup $P(t)\varphi(f)=\E[\varphi(X(t,f))]$, defined for $\varphi\in B_b(L^2([0,1],d\xi))$, satisfies
\[\abs{P(t)\varphi(f+h)-P(t)\varphi(f)}\leq \frac{e^{\sqrt{2}\max\set{L_g,L_{g'}\|f'\|_\infty} T}}{\sqrt{t}}\norm{\varphi}_{\infty}\norm{h}_\alpha,\]
whenever $t\in(0,T]$, $f\in Y$ and $h\in W^{1,2}_0([0,1]d\xi)$, while if $f\in L^2([0,1],d\xi)\setminus Y$, $t\in(0,T]$ and $h\in W^{1,2}_0([0,1]d\xi)$, then 
\[\abs{P(t)\varphi(f+h)-P(t)\varphi(f)}\leq \frac{1}{\sqrt{t}}\norm{\varphi}_{\infty}\norm{h}_\alpha.\]

\subsection{A gradient type perturbation} Assume that Hypotheses \ref{hyp0} hold true and consider the function $F:\X\ra\X$ defined by
\[F(x)=Q^\alpha\D U(x)\]
for some convex, Fr\'echet differentiable with Lipschitz continuous Fr\'echet derivative function $U:\X\ra\R$. This type of function $F$ is pretty common in the literature (see for example \cite{AD-CA-FE1,AN-FE-PA1,CAP-FER1,CAP-FER2,DA2,DA-LU3,DA-TU1,DA-ZA1,FER1}). It is easy to see that the hypotheses of Theorem \ref{Hstrongvar} are satisfied. Indeed, it is obvious that $F(\X)\subseteq H_\alpha$. Moreover 
\begin{align*}
\|F(x)-F(y)\|&=\|Q^\alpha\D U(x)-Q^\alpha\D U(y)\|\\
&\leq \|Q^\alpha\|_{\mathcal{L}(X)}\|\D U(x)-\D U(y)\|\\
&\leq \|Q^\alpha\|_{\mathcal{L}(X)} L_{\D U}\|x-y\|,
\end{align*}
where $L_{\D U}$ is the Lipschitz constant of $\D U$. So for every and  $x\in\X$, by Theorem \ref{smild}, the stochastic partial differential equation 
\begin{gather*}
\eqsys{
dX(t,x)=\big(AX(t,x)+F(X(t,x))\big)dt+ Q^{\alpha}dW(t), & t\in(0,T];\\
X(0,x)=x\in \X,
}
\end{gather*}
has a unique mild solution $X(t,x)$ in $\X^2([0,T])$. In particular, applying Theorem \ref{Hstrongvar}, the transition semigroup $P(t)\varphi(x)=\E[\varphi(X(t,x))]$, defined for $\varphi\in B_b(\X)$, maps the space $B_b(\X)$ in the space $\lip_{b,H_\alpha}(\X)$ for every $t\in(0,T]$. We remark that this result, when $\alpha\in[0,1/2)$, was already proved in \cite{FUR1} and \cite{MAS2}, while if $\alpha=1/2$ it is new.

\subsection{Cahn--Hilliard type equations}\label{ciancian}
Cahn-Hilliard stochastic equations such as
\[
du(t,x)=\big(\Delta^2u(t,x)-\Delta f(u(t,x))\big)dt+dW(t),\qquad (t,x)\in\R^+\times[0,\pi]^d
\]
where $d\in\N\setminus\set{0}$, $f$ is a polynomial of odd degree with positive leading coefficient and $u:\R^+\times [0,\pi]^d\ra\R$, where considered in \cite{CA1,DA-DE1}. Here we consider an abstract generalization already studied in \cite{DA-DE-TU1} and \cite{ES-ST1}, 
\begin{gather}\label{CH}
\eqsys{
dX(t,x)=\big(AX(t,x)+(-A)^{1/2}F(X(t,x))\big)dt+ dW(t), & t\in(0,T];\\
X(0,x)=x\in \X.
}
\end{gather}
We assume that Hypotheses \ref{hyp0} hold with $\alpha=0$. Let $F:\X\ra\X$ be such that $F(\X)\subseteq H_{1/2}$, and for every $x,k\in\X$, there exists a constant $L_F>0$ such that
\[ 
\norm{F(x+k)-F(x)}_{1/2}\leq L_F\norm{k}.
\] 
Then, recalling that $A=-(1/2)Q^{-1}$, we apply Theorems \ref{Hstrong} on the transition semigroup $P(t)$ associated  to \eqref{CH}, and so $P(t)$ maps the space $B_b(\X)$ in $\lip_{b}(\X)$ for every $t\in(0,T]$.


\subsection{A classical example} Again, we consider the setting of Section \ref{ex_1/2}. Let $F:L^2([0,1],d\xi)\ra L^2([0,1],d\xi)$ defined by choosing $x_1,\ldots, x_n\in L^2([0,1],d\xi)$ and a function $f:[0,1]\times\R^n\ra\R$, $(\xi,y_1,\ldots,y_n)\mapsto f(\xi,y_1,\ldots,y_n)$ and setting
\begin{align*}
(F(g))(\xi):=f\pa{\xi,\int_0^1 g(\eta)x_1(\eta) d\eta,\ldots, \int_0^1 g(\eta)x_n(\eta) d\eta}.
\end{align*}
Assume that $x_1,\ldots, x_n$ are orthonormal and for every $i=1,\ldots,n $
\begin{align*}
&f,\frac{\partial f}{\partial \xi},\frac{\partial f}{\partial y_i}\text{ are bounded and continuous on }[0,1]\times\R^n;\\
&f(0,y_1,\ldots,y_n)=0, \text{ for every }y_1,\ldots,y_n\in\R.
\end{align*}
$F$ satisfies the hypotheses of Theorem \ref{Hstrongvar}. Indeed $F(L^2([0,1],d\xi))\subseteq W^{1,2}_0([0,1],d\xi)$, since
\[(F(g))(0)=f\pa{0,\int_0^1 g(\eta)x_1(\eta) d\eta,\ldots, \int_0^1 g(\eta)x_n(\eta) d\eta}=0\]
and
\[(F(g))'(\xi)=\frac{\partial f}{\partial \xi}\pa{\xi,\int_0^1 g(\eta)x_1(\eta) d\eta,\ldots, \int_0^1 g(\eta)x_n(\eta) d\eta}\leq \norm{\frac{\partial f}{\partial \xi}}_\infty.\]
Moreover for $g_1,g_2\in L^2([0,1],d\xi)$
\begin{align*}
&\|F(g_1)-F(g_2)\|_{L^2([0,1],d\xi)}^2\\
&=\int_0^1\abs{f\pa{\xi,\int_0^1 g_1x_1 d\eta,\ldots, \int_0^1 g_1x_n d\eta}-f\pa{\xi,\int_0^1 g_2x_1 d\eta,\ldots, \int_0^1 g_2x_n d\eta}}\\
&\leq n^2\sup_{i=1,\ldots,n}\set{\norm{\frac{\partial f}{\partial \xi}}^2_\infty,\norm{\frac{\partial f}{\partial y_i}}^2_\infty}\sum_{i=1}^n\abs{\int_0^1(g_1-g_2)x_id\eta}^2\\
&\leq n^2\sup_{i=1,\ldots,n}\set{\norm{\frac{\partial f}{\partial \xi}}^2_\infty,\norm{\frac{\partial f}{\partial y_i}}^2_\infty}\norm{g_1-g_2}^2_{L^2([0,1],d\xi)}.
\end{align*}

By Theorem \ref{smild} for every $x\in\X$, the stochastic partial differential equation 
\begin{gather*}
\eqsys{
dX(t,x)=\big(-\frac{1}{2}X(t,x)+F(X(t,x))\big)dt+ Q^{1/2}dW(t), & t\in(0,T];\\
X(0,x)=x\in \X,
}
\end{gather*}
has a unique mild solution $X(t,x)$ in $\X^2([0,T])$. By Theorem \ref{Hstrongvar}, the transition semigroup maps the space $B_b(\X)$ in $\lip_{b,H_{1/2}}(\X)$ for every $t\in(0,T]$. So we get an improvement of \cite[Section 4]{FUR1}, since there the case $\alpha=1/2$ was not considered.

\appendix

\section{Proof of Theorem \ref{smild}}\label{proofsmild}

The aim of this section is to look for pathwise uniqueness of for \eqref{lalala}.

\begin{proof}[of Theorem \ref{smild}]
We first prove uniqueness. Let $X_1(t,x)$, $X_2(t,x)$ be two mild solutions of \eqref{lalala}. Recall that by definition a mild solution solves
\[
X(t,x)=e^{tA}x+\int_0^te^{(t-s)A}\Phi(s,X(s,x))ds+\int_0^te^{(t-s)A}Q^{\alpha}dW(s).
\]
Hence, we have
\begin{align*}
\E[\norm{X_1(t,x)-X_2(t,x)}^2]&=\E\sq{\norm{\int_0^te^{(t-s)A}\left(\Phi(s,X_1(s,x))-\Phi(s,X_2(s,x))\right)ds}^2}\\ 
&\leq t\E\sq{\int_0^t\norm{e^{(t-s)A}(\Phi(s,X_1(s,x))-\Phi(s,X_2(s,x)))}^2ds}\\
&\leq t\E\sq{\int_0^t\norm{\Phi(s,X_1(s,x))-\Phi(s,X_2(s,x))}^2ds}\\
&\leq tL_\Phi^2\int_0^t\E\sq{\norm{X_1(v,x)-X_2(v,x)}^2}ds.
\end{align*}
From the Gronwall inequality and the same arguments of the proof of \cite[Theorem 7.5]{DA-ZA4} uniqueness follows.

The proof of existence is based on the contraction mapping theorem. We define the Volterra operator
\[V(Y)(t):=e^{tA}x+\int_0^te^{(t-s)A}\Phi(s,Y(s))ds+\int_0^te^{(t-s)A}Q^{\alpha}dW(s),\]
in the space $\X^2([0,T])$, and first of all we show that $V$ maps $\X^2[0,T]$ into itself. Indeed, for any $Y\in \X^2[0,T]$ we have 
\begin{align}
\norm{V(Y)}_{\X^2[0,T]}^2\leq 3\|e^{\cdot A}x\|_{\X^2[0,T]}^2+&3\norm{\int_0^\cdot e^{(\cdot-s)A}\Phi(s,Y(s))ds}_{\X^2[0,T]}^2\notag\\
&\phantom{aaaaaaaaaa}+3\norm{\int_0^\cdot e^{(\cdot -s)A}Q^{\alpha}dW(s)}_{\X^2[0,T]}^2.\label{cincin}
\end{align}
We recall that $\|e^{tA}x\|_{\X^2[0,T]}^2\leq \norm{x}^2$, for $t>0$.
Let $y\in \X$ be such that $\eqref{inty}$ holds, then
\begin{align*}
\norm{\int_0^\cdot e^{(\cdot-s)A}\Phi(s,Y(s))ds}_{\X^2[0,T]}^2 &=\sup_{t\in[0,T]}\E\left[\norm{\int_0^te^{(t-s)A}\Phi(s,Y(s))ds}^2\right]\\
&\leq T\E\left[\int_0^T\norm{\Phi(s,Y(s))}^2ds\right]\\
&\leq 2T\E\left[\int_0^T\norm{\Phi(s,Y(s))-\Phi(s,y)}^2ds\right]\\
&\phantom{aaaaaaaaaaaaaaaaaaaa}+2T\E\left[\int_0^T\norm{\Phi(s,y)}^2ds\right]\\
&\leq 2TL^2_\phi\E\left[\int_0^T\norm{Y(s)-y}^2ds\right]+2T\E\left[\int_0^T\norm{\Phi(s,y)}^2ds\right]\\
&\leq 4T^2L^2_\phi\norm{Y}^2_{\X[0,T]^2}+4T^2L_{\Phi}^2\norm{y}^2+2T\int_0^T\norm{\Phi(s,y)}^2ds.
\end{align*}
Since $Y\in \X^2[0,T]$ and recalling that \eqref{inty} holds,  
\[\norm{\int_0^\cdot e^{(\cdot -s)A}\Phi(s,Y(s))ds}_{\X^2[0,T]}^2<+\infty.\] 
By \cite[Theorem 4.36]{DA-ZA4} and Hypothesis \ref{hyp0}, the third summand in \eqref{cincin} is finite. In the same way as the proof of uniqueness, we have
\[\norm{V(Y_1)-V(Y_2)}_{\X^2[0,T]}^2\leq T^2L^2_\phi \norm{Y_1-Y_2}_{\X^2[0,T]}^2.\]
So the existence follows by the contraction mapping theorem (using similar arguments as the one used in Theorem \ref{deZ}) and same arguments of proof of \cite[Theorem 7.5]{DA-ZA4}.
\end{proof}

\section*{Acknowledgements} The authors would like to thank A. Lunardi for many useful discussions and comments. The authors are members of GNAMPA (Gruppo Nazionale per l'Analisi Matematica, la Probabilit\`a e le loro Applicazioni) of the Italian Istituto Nazionale di Alta Ma\-te\-ma\-ti\-ca (INdAM).  S. F. have been partially supported by the INdAM-GNAMPA
Project 2019 ``Metodi analitici per lo studio di PDE e problemi collegati in dimensione infinita''. The authors have been also partially supported by the research project PRIN 2015233N5A ``Deterministic and stochastic evolution equations'' of the Italian Ministry of Education, MIUR.



\end{document}